\documentclass{amsart}
\usepackage{graphicx}        
\usepackage{multicol}        
\usepackage[bottom]{footmisc}


\usepackage{rotating}

\theoremstyle{plain}
\newtheorem{theorem}{Theorem}[section]
\newtheorem{lemma}[theorem]{Lemma}
\newtheorem{corollary}[theorem]{Corollary}

\newtheorem{conjecture}[theorem]{Conjecture}
\newtheorem{proposition}[theorem]{Proposition}

\theoremstyle{remark}
\newtheorem{remark}[theorem]{Remark}

\theoremstyle{definition}
\newtheorem{definition}[theorem]{Definition}
\newtheorem{assumption}[theorem]{Assumption}


\usepackage{amssymb}
\usepackage{amsmath}
\usepackage{xspace}
\usepackage{enumerate}
\usepackage[all]{xy}
\usepackage{pifont} 


\newcommand{\si}[1]{\mbox{\textsf{#1}}}

%


\newcommand{\pisy}{\blacksquare}
\newcommand{\afsy}{\square}
\newcommand{\Xpi}{X_\pisy}
\newcommand{\Xaf}{X_\afsy}

\newcommand{\tempre}{\tau}
\newcommand{\busby}{\eta}
\newcommand{\A}{\ensuremath{\mathfrak{A}}\xspace}
\newcommand{\B}{\ensuremath{\mathfrak{B}}\xspace}
\newcommand{\IdealI}{\ensuremath{\mathfrak{I}}\xspace}
\newcommand{\IdealJ}{\ensuremath{\mathfrak{J}}\xspace}

\newcommand{\FK}[2]{\ensuremath{\mathsf{FK}_{#1}\left(#2\right)}\xspace}
\newcommand{\FKplus}[2]{\ensuremath{\mathsf{FK}^+_{#1}\left(#2\right)}\xspace}

\newcommand{\tref}[1]{Theorem \ref{#1}}
\newcommand{\pref}[1]{Proposition~\ref{#1}}
\newcommand{\cref}[1]{Corollary~\ref{#1}}
\newcommand{\rref}[1]{Remark~\ref{#1}}

	\newcommand{\kl}{\ensuremath{\mathit{KL}}\xspace}
	\newcommand{\kk}{\ensuremath{\mathit{KK}}\xspace}

	\newcommand{\multialg}[1]{\mathcal{M}(#1)\xspace}

\newcommand{\bK}{{\mathbb K}}
\newcommand{\X}{{\mathsf X}}

\newcommand{\K}{\mathbb{K}}

		\newcommand{\setof}[2]{\ensuremath{\left\{ #1 \: : \: #2 \right\}}}
\newcommand{\Ca}{$C^*$-al\-ge\-bra\xspace}

\newcommand{\shom}{$*$-ho\-mo\-mor\-phism\xspace}
\newcommand{\AFa}{$AF$ al\-ge\-bra\xspace}
\newcommand{\Cas}{$C^*$-al\-ge\-bras\xspace}

\newcommand{\shoms}{$*$-ho\-mo\-mor\-phisms\xspace}
\newcommand{\AFas}{$AF$ al\-ge\-bras\xspace}
\newcommand{\Oia}{$\mathcal{O}_\infty$-absorbing\xspace}

\DeclareMathOperator{\Prim}{Prim}

\DeclareMathOperator{\Ad}{Ad}

\newcommand{\toep}[1][E]{\mathcal T(E)}

	\newcommand{\ftn}[3]{ #1 : #2 \rightarrow #3 }
	\newcommand{\corona}[1]{\mathcal{Q}(#1)\xspace}

\newcommand{\unital}{\mathbf 1}
\newcommand{\fg}{f.g.}

\newcommand{\AF}{$\square$}
\newcommand{\PI}{$\blacksquare$}
\newcommand{\AFPI}{\ding{111}}
\usepackage{ifthen}

\usepackage{tikz}
\usetikzlibrary[arrows,snakes,backgrounds]

\usepackage{intcalc}

\newcommand{\oooI}[1]{\ifthenelse{\equal{#1}{gray}}{\AFPI}{\ifthenelse{\isodd{#1}}{\PI}{\AF}}}
\newcommand{\ooIo}[1]{\ifthenelse{\equal{#1}{gray}}{\AFPI}{\ifthenelse{\isodd{\intcalcDiv{#1}{2}}}{\PI}{\AF}}}
\newcommand{\oIoo}[1]{\ifthenelse{\equal{#1}{gray}}{\AFPI}{\ifthenelse{\isodd{\intcalcDiv{#1}{4}}}{\PI}{\AF}}}
\newcommand{\Iooo}[1]{\ifthenelse{\equal{#1}{gray}}{\AFPI}{\ifthenelse{\isodd{\intcalcDiv{#1}{8}}}{\PI}{\AF}}}

\newcommand{\twop}[1]{
\begin{tikzpicture}
\node at ( 0,0) {\ooIo{#1}};
\node at ( 1,0) {\oooI{#1}};
\draw [->]  (0.2,0) -- (0.8,0);
\end{tikzpicture}}

\newcommand{\threeplin}[1]{
\begin{tikzpicture}
\node at ( 0,0) {\oIoo{#1}};
\node at ( 1,0) {\ooIo{#1}};
\node at ( 2,0) {\oooI{#1}};
\draw [->]  (0.2,0) -- (0.8,0);
\draw [->]  (1.2,0) -- (1.8,0);
\end{tikzpicture}}

\newcommand{\threepin}[1]{
\begin{tikzpicture}
\node at ( 0,0) {\oIoo{#1}};
\node at ( 1,0) {\ooIo{#1}};
\node at ( 2,0) {\oooI{#1}};
\draw [->]  (0.2,0) -- (0.8,0);
\draw [->]  (1.8,0) -- (1.2,0);
\end{tikzpicture}}

\newcommand{\threepout}[1]{
\begin{tikzpicture}
\node at ( 0,0) {\oIoo{#1}};
\node at ( 1,0) {\ooIo{#1}};
\node at ( 2,0) {\oooI{#1}};
\draw [->]  (0.8,0) -- (0.2,0);
\draw [->]  (1.2,0) -- (1.8,0);
\end{tikzpicture}}

\newcommand{\fourA}[1]{
\begin{tikzpicture}
\node at ( 0,0.7) {\Iooo{#1}};
\node at ( 1,0.7) {\oIoo{#1}};
\node at ( 2,0.7) {\ooIo{#1}};
\node at ( 1,0) {\oooI{#1}};
\draw [->]  (0,0.5) -- (0.8,0.2);
\draw [->]  (1,0.5) -- (1,0.2);
\draw [->]  (2,0.5) -- (1.2,0.2);
\end{tikzpicture}}


\newcommand{\fourE}[1]{
\begin{tikzpicture}
\node at ( 0,0) {\Iooo{#1}};
\node at ( 1,0) {\oooI{#1}};
\node at ( 2,0) {\ooIo{#1}};
\node at ( 3,0) {\oIoo{#1}};
\draw [->]  (0.2,0) -- (0.8,0);
\draw [->]  (2.2,0) -- (2.8,0);
\draw [->]  (1.8,0) -- (1.2,0);
\end{tikzpicture}}
\newcommand{\fourF}[1]{
\begin{tikzpicture}
\node at ( 0,0) {\Iooo{#1}};
\node at ( 1,0) {\oooI{#1}};
\node at ( 2,0) {\ooIo{#1}};
\node at ( 3,0) {\oIoo{#1}};
\draw [->]  (0.2,0) -- (0.8,0);
\draw [->]  (1.8,0) -- (1.2,0);
\draw [->]  (2.8,0) -- (2.2,0);
\end{tikzpicture}}
\newcommand{\fouroneE}[1]{
\begin{tikzpicture}
\node at ( 0.7,0) {\oIoo{#1}};
\node at ( 0.7,0.7) {\ooIo{#1}};
\node at ( 0,0) {\oooI{#1}};
\node at ( 0,0.7) {\Iooo{#1}};
\draw [->]  (0.7,0.2) -- (0.7,0.5);
\draw [->]  (0.5,0) -- (0.2,0);
\draw [->]  (0.2,0.7) -- (0.5,0.7);
\draw [->]  (0,0.5) -- (0,0.2);
\end{tikzpicture}}
\newcommand{\fouroneF}[1]{
\begin{tikzpicture}
\node at ( 0.7,0) {\oooI{#1}};
\node at ( 0.7,0.7) {\oIoo{#1}};
\node at ( 0,0) {\ooIo{#1}};
\node at ( 0,0.7) {\Iooo{#1}};
\draw [->]  (0.5,0.5) -- (0.2,0.2);
\draw [->]  (0.2,0) -- (0.5,0);
\draw [->]  (0,0.5) -- (0,0.2);
\end{tikzpicture}}

\newcommand{\fourthreeeight}[1]{
\begin{tikzpicture}
\node at ( 0,0) {\oIoo{#1}};
\node at ( 1,0) {\ooIo{#1}};
\node at ( 2,0) {\oooI{#1}};
\node at ( 1,0.7) {\Iooo{#1}};
\draw [->]  (0.8,0.5) -- (0.2,0.2);
\draw [->]  (1,0.5) -- (1,0.2);
\draw [->]  (1.2,0.5) -- (1.8,0.2);
\end{tikzpicture}}

\newcommand{\fourthreenine}[1]{
\begin{tikzpicture}
\node at ( 0,0) {\oIoo{#1}};
\node at ( 1,0) {\Iooo{#1}};
\node at ( 2,0) {\ooIo{#1}};
\node at ( 3,0) {\oooI{#1}};
\draw [->]  (0.8,0) -- (0.2,0);
\draw [->]  (1.2,0) -- (1.8,0);
\draw [->]  (2.2,0) -- (2.8,0);
\end{tikzpicture}}

\newcommand{\fourthreeB}[1]{
\begin{tikzpicture}
\node at ( 0.7,0) {\oooI{#1}};
\node at ( 0.7,0.7) {\oIoo{#1}};
\node at ( 0,0) {\ooIo{#1}};
\node at ( 0,0.7) {\Iooo{#1}};
\draw [->]  (0.7,0.5) -- (0.7,0.2);
\draw [->]  (0.2,0) -- (0.5,0);
\draw [->]  (0.2,0.7) -- (0.5,0.7);
\draw [->]  (0,0.5) -- (0,0.2);
\end{tikzpicture}}

\newcommand{\fourthreeE}[1]{
\begin{tikzpicture}
\node at ( 0.7,0) {\oooI{#1}};
\node at ( 0.7,0.7) {\oIoo{#1}};
\node at ( 0,0) {\ooIo{#1}};
\node at ( 0,0.7) {\Iooo{#1}};
\draw [->]  (0.2,0.7) -- (0.5,0.7);
\draw [->]  (0.5,0.5) -- (0.2,0.2);
\draw [->]  (0.7,0.5) -- (0.7,0.2);
\end{tikzpicture}}

\newcommand{\fourthreeF}[1]{
\begin{tikzpicture}
\node at ( 0,0) {\Iooo{#1}};
\node at ( 1,0) {\oIoo{#1}};
\node at ( 2,0) {\ooIo{#1}};
\node at ( 3,0) {\oooI{#1}};
\draw [->]  (0.2,0) -- (0.8,0);
\draw [->]  (1.2,0) -- (1.8,0);
\draw [->]  (2.2,0) -- (2.8,0);
\end{tikzpicture}}

\title[Graph $C^*$-algebras with no more than four
primitive ideals]{Classification of graph $C^*$-algebras with no more than four primitive ideals}

\author{S{\o}ren Eilers}
\author{Gunnar Restorff}
\author{Efren Ruiz}

\address{Department of Mathematical Sciences\\
        University of Copenhagen\\
        Universitetsparken~5\\
        DK-2100 Copenhagen\\Denmark}
        \email{eilers@math.ku.dk}
\address{
Department of Science and Technology\\
University of the Faroe Islands\\ N\'oat\'un 3\\FO-100 T\'orshavn\\Faroe Islands}
\email{gunnarr@setur.fo}
\address{
Department of Mathematics\\University of Hawaii\\
Hilo, 200 W. Kawili St.\\
Hilo, Hawaii\\
96720-4091 USA}
        \email{ruize@hawaii.edu}

\newcommand{\hex}[1]{\ifthenelse{#1<10}{#1}{\ifthenelse{#1=10}{A}{\ifthenelse{#1=11}{B}{\ifthenelse{#1=12}{C}{\ifthenelse{#1=13}{D}{\ifthenelse{#1=14}{E}{F}}}}}}}

\newcommand{\msp}[2]{\textsf{#1.#2}}
\newcommand{\myref}[3]{\textsf{#1.#2.\hex{#3}}}
\newcommand{\mylab}[3]{\textsf{\hex{#3}}\label{l#1_#2_#3}}
\newcommand{\mspk}[2]{\textsf{#1.#2.x}}
\newcommand{\AFc}{\tref{AF}}
\newcommand{\PIc}{\tref{PI}}

\newcommand{\FININFPIideal}{\pref{prop:classgraph1}}
\newcommand{\FININFAFideal}{\pref{prop:classgraph2}}
\newcommand{\ETPIideal}{\pref{prop:classgraph1}}
\newcommand{\ETAFideal}{\pref{prop:classgraph2}}

\newcommand{\fandownAFideal}{\tref{t:class2}}
\newcommand{\fandownPIideal}{\tref{t:class2-PI}}

\newcommand{\fanupAFquotient}{\tref{t:class1}}
\newcommand{\fanupPIquotient}{\tref{t:class1-PI}}

\date{\today}
\begin{document}
\begin{abstract}
We describe the status quo of the classification problem of graph $C^*$-algebras with four primitive ideals or less.
\end{abstract}
\maketitle

\section{Introduction}

The class of graph $C^*$-algebras (cf.~\cite{ir:ga} and the references therein) has proven to be an important and interesting venue for classification theory by $K$-theoretical invariants; in particular with respect to $C^*$-algebras with finitely many ideals, and in 2009, the authors formulated the following \emph{working conjecture}:

\begin{conjecture}\label{woco}
  Graph \Cas $C^*(E)$ with finitely many ideals are classified up to stable isomorphism by their filtered, ordered $K$-theory $\FKplus{\operatorname{Prim}(C^*(E))}{C^*(E)}$.
\end{conjecture}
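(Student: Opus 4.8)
The plan is to separate the statement into a \emph{range} (realization) half and a \emph{uniqueness} (rigidity) half, and to exploit the rigid structural constraints that distinguish graph \Cas from arbitrary separable nuclear \Cas over a finite space. The first reduction is that a graph \Ca $C^*(E)$ with finitely many ideals is separable, nuclear, satisfies the UCT, and has primitive ideal space $X=\Prim(C^*(E))$ a finite $T_0$ space whose topology is governed by the gauge-invariant ideal lattice of $E$, described combinatorially by saturated hereditary subsets of vertices (with breaking vertices). The decisive structural input is that every simple subquotient of $C^*(E)$ is either an \AFa or a purely infinite Kirchberg algebra, so that after stabilization $C^*(E)\otimes\mathbb{K}$ is an object assembled over $X$ by extensions, all of whose strata already fall under one of the two existing complete classification theories. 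The goal is then to show that $\FKplus{X}{-}$ determines the stable isomorphism class, first on each structural stratum and then across the gluing.

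I would dispatch the two extreme cases with the existing abstract machinery. When every subquotient is AF, $C^*(E)\otimes\mathbb{K}$ is itself AF and Elliott's theorem reduces the problem to the ordered, filtered $K_0$-datum, which is the $K_1$-free part of $\FKplus{X}{-}$. When every subquotient is purely infinite, $C^*(E)\otimes\mathbb{K}$ is a stable, nuclear, strongly purely infinite \Ca over $X$ with all subquotients in the bootstrap class, and Kirchberg's classification shows it is determined up to $*$-isomorphism by its ideal-related $\kk(X)$-class. The remaining work here is to pass from $\kk(X)$-equivalence to an isomorphism of filtered $K$-theory, i.e.\ to prove a universal coefficient theorem over $X$ computing $\kk(X;-,-)$ from $\FK{X}{-}$, together with the ordered refinement, which is essential precisely on the stably finite strata where $K_0$ carries a nontrivial order.

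The genuine difficulty is the mixed case, in which some strata are AF and others purely infinite: this is covered by no single abstract theorem, and, worse, filtered $K$-theory is known \emph{not} to be complete for general separable nuclear \Cas over a finite $T_0$ space, since there are Meyer--Nest type examples that are not $\kk(X)$-equivalent yet share the same $\FK{X}{-}$. Hence the conjecture can hold only because graph \Cas realize a strictly restricted range of invariants. I would attack this by a \emph{geometric} strategy: fix a finite list of moves on graphs (in-splitting, out-splitting, source and sink deletion, the Cuntz splice, collapsing of regular edges, and their inverses) and prove, first, that each move preserves both the stable isomorphism class and the ordered filtered $K$-theory --- the routine direction, amounting to bookkeeping in the six-term exact sequences --- and, second, that any isomorphism $\FKplus{X}{C^*(E)}\cong\FKplus{X}{C^*(F)}$ can be realized by a finite sequence of such moves linking $E$ to $F$. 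This latter combinatorial statement is the true incarnation of the classification and subsumes the strong-shift-equivalence and flow-equivalence phenomena of symbolic dynamics.

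The main obstacle is exactly this combinatorial completeness in the mixed regime, combined with the failure of a naive UCT over $X$. The route I expect to need is to \emph{reduce} filtered $K$-theory to a smaller, provably complete invariant on the graph-realizable range --- in the spirit of the reduced filtered $K$-theory that characterizes Cuntz--Krieger algebras --- and to show that on this reduced invariant a UCT does hold, so that matching invariants forces $\kk(X)$-equivalence stratum by stratum in a manner compatible with the gluing maps. One then lifts that $\kk(X)$-equivalence to a $*$-isomorphism by a relative form of Kirchberg's theorem on the infinite strata while pinning down the AF strata by their ordered $K_0$, and finally checks that the two liftings agree along the connecting extensions. Establishing that the move list is exhaustive for \emph{all} finite $T_0$ spaces $X$, rather than only for low-complexity spaces such as totally ordered or accordion spaces where the UCT is already available, is the step I expect to resist any uniform argument and to absorb the bulk of the effort.
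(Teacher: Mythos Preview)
The statement you are attempting to prove is Conjecture~\ref{woco}, and the paper does \emph{not} prove it: the authors state explicitly that ``the conjecture remains open,'' and the body of the paper establishes only partial results for primitive ideal spaces with at most four points, leaving 22 of the 125 four-point tempered cases unresolved even under added hypotheses such as unitality or finitely generated $K$-theory. So there is no proof in the paper to compare your proposal against, and your proposal is itself not a proof but a research outline --- you concede as much when you write that the combinatorial completeness of the move list ``is the step I expect to resist any uniform argument.''

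That said, your strategic decomposition into the pure AF case, the pure \Oia case, and the mixed case matches the paper's organization exactly, and you correctly identify the central obstruction: the UCT computing $\kk(X;-,-)$ from filtered $K$-theory fails for general finite $T_0$ spaces (the Meyer--Nest counterexamples over \msp{4}{A}), so any proof must exploit the restricted range of invariants realized by graph algebras. Where your proposal diverges from the paper is in the proposed attack on the mixed case: you advocate a \emph{geometric} program via graph moves (in/out-splitting, Cuntz splice, etc.) and a proof that move-equivalence realizes every isomorphism of ordered filtered $K$-theory, whereas the paper proceeds entirely \emph{analytically}, via fullness of extensions, the corona factorization property, Busby-invariant bookkeeping, and pullback decompositions, working case-by-case through the tempered signatures. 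Your geometric approach is not wrong --- it is in fact the line that later work pursued --- but it is a different program from the one the paper carries out, and neither your outline nor the paper's methods currently close the gap.
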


Here, the filtered, ordered $K$-theory is simply the collection of all $K_0$- and $K_1$-groups of subquotients of the $C^*$-algebra in question, taking into account all the natural transformations among them (details will be given below). The conjecture addresses the possibility of a classification result which is not {strong} (cf.~\cite{gae:ttc}) in the sense that we do not expect every possible isomorphism at the level of the invariant to lift to the $C^*$-algebras.

The conjecture remains open and we are forthwith optimistic about its veracity, although some of the results which have been obtained, as we shall see, seem to indicate that an added condition of finitely generated $K$-theory could be needed. In the present paper we will discuss the status of this conjecture for graph algebras with four or fewer primitive ideals; if the number is three or fewer we can present a complete classification under the condition of finitely generated $K$-theory, but for the number four there are many cases still eluding our methods. Adding, in some cases, the condition of finitely generated $K$-theory -- or even stronger, that the graph algebra is unital -- we may solve 103 of the 125 cases, leaving less than one fifth of the cases open.
Our main contribution in the present paper concerns the class of \emph{fan spaces} which has not been accessible through the methods we have used earlier, but we will also go through those results in our two papers \cite{segrer:ccfis} and \cite{segrer:cecc} which apply here.


\subsection{Tempered primitive ideal spaces}

Invoking an idea from \cite{seerapws:agc} we organize our overview using a \emph{tempered ideal space} of the \Ca in question. This is defined for any \Ca with only finitely many ideals as the pair $(\Prim(\A),\tempre)$ where $\tempre:\Prim(\A)\to \{0,1\}$ is defined as
\[
\tempre(\IdealI)=\begin{cases}0&K_0(\IdealI/\IdealI_0)_+\not=K_0(\IdealI/\IdealI_0)\\1&K_0(\IdealI/\IdealI_0)_+=K_0(\IdealI/\IdealI_0)
\end{cases}
\]
with $\IdealI_0$ the maximal proper ideal of $\IdealI$ (this exists by the fact that $\IdealI$ is prime and contains only finitely many ideals).
We set
\[
\Xaf=\{x\in X\mid \tau(x)=0\}\qquad \Xpi=\{x\in X\mid \tau(x)=1\}
\]
To be able to work systematically with these objects, we now give them a combinatorial description.

\begin{definition} 
Let \A be a \Ca. 
We let $\Prim(\A)$ denote the \emph{primitive ideal space} of \A, equipped with the usual hull-kernel topology, also called the Jacobson topology. 
We always identify the open sets of $\Prim(\A)$, $\mathbb{O}(\Prim(\A))$, and the lattice of ideals of \A, $\mathbb{I}(\A)$, using the lattice isomorphism
$$U\mapsto \bigcap_{\mathfrak{p}\in \Prim(\A)\setminus U}\mathfrak{p} . $$ When $U$ is an open set we write $\A(U)$ for the corresponding ideal of $\A$. When $U\supset V$ are both open, so that $U\setminus V$ is locally closed, we write $\A(U\setminus V)$ for the subquotient $\A(U)/\A(V)$.
\end{definition}

Note that whenever $\Xaf$ or $\Xpi$ are locally closed, standard results in graph $C^*$-algebra theory give that $\A(\Xaf)$ and $\A(\Xpi)$ are \AFas and \Oia algebras, respectively.

\begin{definition}
Let $X$ be a topological space. The \emph{specialisation preorder} $\prec$ on $X$ is defined by $x\prec y$ if and only if $x\in\overline{\{y\}}$.
\end{definition}

A topological space satisfies the $T_0$ separation axiom if and only if its specialisation preorder is a partial order. 

\begin{definition}
A subset $H$ of a preordered set $(X,\leq)$ is called \emph{hereditary} if $x\leq y\in H$ implies $x\in H$. 
\end{definition}

\begin{definition}
Let $(X,\leq)$ be a preordered set. 
The \emph{Alexandrov topology} of $X$ is the topology with the closed sets being the hereditary sets. 

A topological set is called an \emph{Alexandrov space} if it carries the Alexandrov topology of some preordered set. The preorder is necessarily the specialisation preorder. 
A topological space is an Alexandrov space if and only if arbitrary intersections of open sets are open. 
\end{definition}

Since we are dealing with \Cas with finite primitive ideal spaces, these are all Alexandrov spaces satisfying the $T_0$ separation axiom. 
Consequently, we can equivalently consider all partial orders on finite sets. The tempered primitive ideal space for a \Ca with $n$ primitive ideals may hence be uniquely described using a partial order on $\{1,\dots,n\}$ and a map in $\{0,1\}^{\{1,\dots,n\}}$.

The \emph{transitive reduction} of a relation $R$ on a set $X$ is a minimal relation $S$ on $X$ having the same transitive closure as $R$. In general neither existence nor uniqueness are guaranteed, but if the transitive closure of $R$ is antisymmetric and finite, there is a unique transitive reduction. 
We will illustrate our (finite) topological spaces with graphs of the transitive reduction of the specialisation order, where we write an arrow $x\rightarrow y$ if and only if $x$ is less than $y$ in the transitive reduction of the specialisation order (similar to the Hasse diagram). The value of $\tempre$ will be indicated by colors of the vertices of the graph; white for $0$ and black for $1$.

We obtain a unique signature for each tempered ideal space as follows. Consider the adjacency matrix of the graph of the specialisation order and recall that (by transitivity and antisymmetry) we can always permute the vertices so that the adjacency matrix becomes an upper triangular matrix. Since the relation is reflexive, we will have ones in the diagonal, so without loss of information we may write the values of $\tempre$ there.
To each such upper triangular matrix 
\[
A=\begin{bmatrix} t_1&a_{1,2}&&a_{1,n-1}&a_{1,n}\\&t_2&a_{2,3}&&a_{2,n}\\
&&\ddots&\ddots&\\
&&&t_{n-1}&a_{n-1,n}\\
&&&&t_n
\end{bmatrix}
\]
 we associate two binary numbers
\[
a=a_{1,2}a_{1,3}\cdots a_{1,n}a_{2,3}a_{2,4}\cdots a_{2,n}\cdots a_{n-1,n}
\]
and 
\[
t=t_1\cdots t_n
\]
In general, there are several such binary numbers associated with a specialisation order by means of permuting the vertices. We choose the order of the vertices to obtain the smallest possible pair $(a,t)$ ordered lexicographically as the unique identifier for this specific tempered ideal structure. In the interest of conserving space we write hexadecimal expansion of the numbers when referring to a certain structure. We write \textsf{n.a.t} and \textsf{n.a} to indicate \emph{signatures} and \emph{tempered signatures}, respectively, defined this way (where \textsf{n} and \textsf{a} are numbers written in decimal expansions and \textsf{t} is a number written in hexadecimal expansion).

If a primitive ideal space is disconnected, we may classify the \Cas associated to each component individually. We will hence assume throughout that the \Cas have connected primitive ideal space (when considering graph algebras, a necessary, but not sufficient, condition for this is that the underlying graphs are connected considered as undirected graphs). Determining the number of connected $T_0$-spaces with $n$ points is hard for most $n$; the number has been computed up to $n=16$ in \cite{gbbdm:psp}. But for small $n$ even the number of tempered ideal spaces can readily be found by naive enumeration, by first counting all spaces and then performing inverse Euler transform to obtain those that are connected:

\begin{center}
\begin{tabular}{|l||c|c|c|c|c|c|}\hline
$|\Prim(\A)|$&1&2&3&4&5&6\\\hline \hline
Number of spaces&1&2&5&16&63&318\\\hline
Number of connected spaces&1&1&3&10&44&238\\\hline\hline
Number of tempered spaces&2&10&62&510&5292&69364\\\hline
Number of connected tempered spaces&2&4&20&125&1058&11549\\\hline
\end{tabular}
\end{center}

We will restrict our attention to $|\Prim(\A)|\leq 4$ and hence have 15 (connected) primitive ideal spaces\footnote{The space \msp{4}{E} was forgotten on page 230 of \cite{rmrn:ctsbc}}  which may be given temperatures in a total of 151 different ways to concern ourselves with:
\begin{center}
\begin{tabular}{|c|c|c|}\hline
\msp{1}{0}&\AFPI&[L],[A] \\\hline
\msp{2}{1}&\twop{gray}&[L],[A]\\\hline
\msp{3}{7}&\threeplin{gray}&[L],[A]\\\hline 
\msp{4}{E}&\fourE{gray}&[A]\\\hline
\msp{4}{F}&\fourF{gray}&[A]\\\hline
\msp{4}{39}&\fourthreenine{gray}&[A]\\\hline
\msp{4}{3F}&\fourthreeF{gray}&[L],[A]\\\hline
                  \end{tabular}
\qquad
\begin{tabular}{|c|c|c|}\hline
\msp{3}{3}&\threepin{gray}&[A],[F]\\\hline
\msp{3}{6}&\threepout{gray}&[A],[F]\\\hline
\msp{4}{A}&\fourA{gray}&[F]\\\hline  
\msp{4}{38}&\fourthreeeight{gray}&[F]\\\hline
\end{tabular}
\quad             
\begin{tabular}{|c|c|c|}\hline
\msp{4}{1F}&\fouroneF{gray}&[Y]\\\hline
\msp{4}{3E}&\fourthreeE{gray}&[Y]\\\hline
\msp{4}{1E}&\fouroneE{gray}&[O]\\\hline
\msp{4}{3B}&\fourthreeB{gray}&[O]\\\hline
\end{tabular}
\end{center}
where \AFPI{} just indicates that it is either \AF{} or \PI. 

We call a finite $T_0$ space \emph{linear} ([L]) if its partial order is total. Following \cite{rbmk:uctcfts} we call it an \emph{accordion space} ([A]) if the symmetrization of the space is the symmetrization of a linear space. We call it a \emph{fan space} ([F]) when there is a smallest or largest element in the preorder, so that when this is removed, what remains is a disjoint union of linear spaces. The remaining spaces we organize as [Y]-spaces and [O]-spaces as indicated.
 In Section~\ref{resultsstart} below we summarize our results subject to this organization.

\subsection{The invariant}

Let $\mathfrak A$ be a $C^*$-algebra with finitely many ideals and set $\mathsf X=\operatorname{Prim}(\mathfrak A)$. Note that for any locally closed subset $Y = U \setminus V$ of $X$, we have two groups $K_{0} ( \A (Y) )$ and $K_{1} ( \A ( Y ) )$.  Moreover, for any three open subsets  $U \subseteq  V \subseteq W$ of $X$, we have a six term exact sequence
\[
\xymatrix{
{K_0(\A ( Y_{1} ) ) }\ar[r]^{\iota_0}&{K_0(\A(Y_{2}) )}\ar[r]^-{\pi_0}&{K_0(\A( Y_{3})) }\ar[d]^{\partial_{0}}\\
{K_1(\A ( Y_{3} )) }\ar[u]^-{\partial_{1}}&{K_1(\A( Y_{2} ) )}\ar[l]^-{\pi_1}&{K_1(\A( Y_{1}) )}\ar[l]^-{\iota_1}}
\]
where $Y_{1} = V \setminus U$, $Y_{2} = W \setminus U$, and $Y_{3} = W \setminus V$.  The \emph{filtered, ordered $K$-theory}%
$\mathrm{FK}^+_{\mathsf X}(\mathfrak A)$ of $\mathfrak A$ is the collection of all $K$-groups thus occurring, equipped with order on $K_0$ and the natural transformations $\{\iota_*,\pi_*,\partial_{*}\}$.

Consequently, if also $\operatorname{Prim}(\mathfrak B)=\mathsf X$, we write $\mathrm{FK}^+_{\mathsf X}(\mathfrak A)\cong \mathrm{FK}^+_{\mathsf X}(\mathfrak B)$ if for each locally closed subset $Y$ of $X$, there exist group isomorphisms
\[
\alpha^{Y}_*:K_*(\A(Y))\to K_*(B( Y) )
\]
preserving all natural transformations in such a way that all 
$\alpha^{Y}_0$ are also order isomorphisms.
All components of this invariant are readily computable (\cite{tmcsemt:imkga}), and often, much of it is redundant. We will not pursue that issue here.

The \emph{filtered $K$-theory}%
$\mathrm{FK}_{\mathsf X}(\mathfrak A)$ of $\mathfrak A$ is defined analogously by disregarding the order structure on $K_0$. The filtered (ordered) $K$-theory over a finite $T_0$-space $X$ can also be used for \Cas over $X$ without being tight.\footnote{Although this is not exactly the same definition as the filtrated $K$-theory in \cite{rmrn:ctsfk}, it is known to be the same for all the cases where we have a UCT. For more on this invariant and \Cas over $X$ the reader is referred to \cite{rmrn:ctsfk} and the references therein. } 

%

\subsection{Graph \Ca}

A \emph{graph} $(E^0, E^1, r, s)$ consists of a countable set $E^0$ of vertices, a countable set $E^1$ of edges, and maps $r : E^1 \to E^0$ and $s : E^1 \to E^0$ identifying the range and source of each edge.  If $E$ is a graph, the \emph{graph $C^*$-algebra} $C^*(E)$ is the universal
\Ca generated by mutually orthogonal projections $\{ p_v
: v \in E^0 \}$ and partial isometries $\{ s_e : e \in E^1 \}$ with mutually orthogonal
ranges satisfying
\begin{enumerate}
\item $s_e^* s_e = p_{r(e)}$ \quad  for all $e \in E^1$
\item $s_es_e^* \leq p_{s(e)}$ \quad for all $e \in E^1$
\item $p_v = \sum_{\{ e \in E^1 : s(e) = v \}} s_es_e^* $ \quad for all $v$ with $0 < | s^{-1}(v) | < \infty$.
\end{enumerate}

The countability hypothesis ensures that all our graph \Cas are separable, which is a necessary hypothesis for many of the classification results.  We will be mainly interested in graph \Cas with real rank zero.  For a graph $E$, we have that the real rank of $C^{*} (E)$ is zero if and only if $E$ is satisfying \emph{Condition (K)}, i.e., no vertex of $E$ is the base point of exactly one simple cycle (see Theorem~3.5 of \cite{jj:realrankzero}).  Moreover, by Proposition~3.3 of \cite{jj:realrankzero}, every graph \Ca with finitely many ideals has real rank zero. 
Thus, every graph $C^{*}$-algebra with finitely many ideals has a norm-full projection, and by 
\cite{lgb:sihsc}, every graph $C^{*}$-algebra with finitely many ideals is stably isomorphic to a unital $C^{*}$-algebra.

Throughout the paper we will use the following facts about graph $C^{*}$-algebras without further mention.

\begin{theorem}
Let $C^{*} (E)$ be a unital graph $C^{*}$-algebra satisfying Condition (K).  

\begin{enumerate}[(1)] 
\item Every ideal of $C^{*} (E)$ is stably isomorphic to a unital graph $C^{*}$-algebra. 

\item Every sub-quotient of $C^{*} (E)$ is stably isomorphic to a unital graph $C^{*}$-algebra.

\item The $K$-groups of every sub-quotient of $C^{*} (E)$ is finitely generated.

\item  Every non-unital simple sub-quotient of $C^{*} (E)$ that is an AF-algebra is isomorphic to $\K$.
\end{enumerate}
\end{theorem}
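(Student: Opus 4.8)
The plan is to reduce the statement to the known description of the gauge-invariant ideal structure of a graph $C^*$-algebra, and then to use real rank zero to extract unitality. First I would record that, since $C^*(E)$ is unital, $E^0$ is finite, and since it satisfies Condition (K) every ideal is gauge-invariant; the gauge-invariant ideals are exactly the ideals $I_{(H,S)}$ indexed by admissible pairs $(H,S)$ of a saturated hereditary set $H$ and a set $S$ of breaking vertices. As $E^0$ is finite there are only finitely many such pairs, so $C^*(E)$ and each of its subquotients has finitely many ideals; by \cite{jj:realrankzero} each therefore has real rank zero and so contains a norm-full projection, and hence by \cite{lgb:sihsc} is stably isomorphic to a unital $C^*$-algebra.

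For (1) I would appeal to the two standard structural facts. On the quotient side, $C^*(E)/I_{(H,S)}$ is canonically isomorphic to the graph $C^*$-algebra of the quotient graph $E/(H,S)$, which again has finitely many vertices, is therefore unital, and has real rank zero and hence satisfies Condition (K). On the ideal side, $I_{(H,S)}$ is Morita equivalent---thus stably isomorphic---to the graph $C^*$-algebra of an explicitly constructed graph obtained from the hereditary subgraph on $H$ by adjoining one vertex for each breaking vertex in $S$. By the first paragraph this graph $C^*$-algebra has a full projection; the one genuinely non-formal point is to improve ``stably isomorphic to a unital $C^*$-algebra'' to ``stably isomorphic to a \emph{unital graph} $C^*$-algebra'', which I would obtain from the fact that a graph $C^*$-algebra possessing a full projection is stably isomorphic to the $C^*$-algebra of a graph with finitely many vertices. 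This proves (1). Statement (2) then follows formally: a subquotient $\A(U\setminus V)=\A(U)/\A(V)$ is an ideal of the quotient $\A/\A(V)$, which by the quotient description is a unital graph $C^*$-algebra satisfying Condition (K), so (1) applies to it.

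Granting (2), statement (3) is a direct computation. Every subquotient is stably isomorphic to a unital graph $C^*$-algebra $C^*(F)$, so $F^0$ is finite and the standard presentation of graph $K$-theory realises $K_0(C^*(F))$ and $K_1(C^*(F))$ as the cokernel and kernel of a homomorphism $\mathbb{Z}^{F^0_{\mathrm{reg}}}\to\mathbb{Z}^{F^0}$ between finitely generated free abelian groups; both groups are thus finitely generated, and $K$-theory is a stable-isomorphism invariant. For (4), let $B$ be a non-unital simple AF subquotient. By (2) the stabilisation $B\otimes\K$ is isomorphic to the stabilisation of a unital graph $C^*$-algebra $C^*(F)$, which is then a unital simple AF algebra; its ordered $K_0$ is a simple dimension group whose positive cone is generated by the finitely many vertex classes, hence finitely generated. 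A simple dimension group with finitely generated positive cone is simplicial and so isomorphic to $(\mathbb{Z},\mathbb{Z}_{\ge 0})$, forcing $C^*(F)\cong M_n(\C)$ and $B\otimes\K\cong\K$. Then $K_0(B)\cong\mathbb{Z}$ with the standard order and without order unit, so Elliott's classification of AF algebras yields $B\cong\K$.

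I expect the main obstacle to be exactly the upgrade step flagged in (1): turning the abstract presence of a full projection (equivalently, via \cite{lgb:sihsc}, stable isomorphism to some unital $C^*$-algebra) into a concrete realisation as a unital graph $C^*$-algebra, i.e.\ replacing the a priori infinite graph underlying an ideal by one with finitely many vertices in the same stable isomorphism class. Everything else is either a direct citation of the gauge-invariant ideal and quotient theory or routine $K$-theoretic and dimension-group bookkeeping.
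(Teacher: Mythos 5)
Your overall route for (1)--(3) is essentially the paper's: both rest on the Bates--Hong--Raeburn--Szyma\'nski description of the (by Condition~(K), automatically gauge-invariant) ideals, which the paper cites via the proof of Theorem~5.7(4) of \cite{mt:ideals} and \cite[Proposition~3.4]{bhrs:idealstrucgraph}, with (2) obtained by combining (1) with the quotient description (\cite[Corollary~3.5]{bhrs:idealstrucgraph}) and (3) from the finite-vertex $K$-theory presentation of \cite{ddmt:kthy}, exactly as you propose. The only substantive divergence is (4): the paper argues combinatorially that for a simple unital AF graph algebra $C^*(F)$ the graph $F$ has no cycles and finitely many vertices, hence a sink, and by \cite[Corollary~2.15]{ddmt:cag} every singular vertex is reached by every vertex, forcing $F$ to be a finite graph and $C^*(F)\cong \mathsf{M}_n$; you reach the same conclusion through dimension groups. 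Your version works, but note it silently uses that the positive cone of $K_0(C^*(F))$ is generated as a monoid by the vertex classes --- true for AF graph algebras because every projection is equivalent to a sum of path projections $s_\mu s_\mu^* \sim p_{r(\mu)}$, but it deserves a word --- whereas the paper's argument avoids dimension-group theory altogether.

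The one real defect is that the step you flag as ``the main obstacle'' in (1) is not an obstacle, and the fact you invoke to dispatch it is both unjustified and far stronger than the theorem being proved. In your own construction the graph realizing $I_{(H,S)}$ up to Morita equivalence has vertex set $H$ together with one added vertex for each breaking vertex in $S$; since $H, S\subseteq E^0$ and $E^0$ is finite, this graph has \emph{finitely many vertices} (only its edge set may be infinite), so its graph $C^*$-algebra is already unital, and stable isomorphism follows from Morita equivalence of separable $C^*$-algebras (Brown--Green--Rieffel). This is precisely the paper's observation that every ideal is Morita equivalent to $C^*(F)$ with $F^0\subseteq E^0$. By contrast, the blanket claim that a graph $C^*$-algebra containing a full projection is stably isomorphic to the algebra of a graph with finitely many vertices is a genuinely hard statement (of the later Arklint--Ruiz ``corners'' type), with no reference available in this paper's bibliography; as written, your proof of (1) rests on it, so you should delete that appeal and instead use the finiteness of $H\cup S$, after which everything else in your argument goes through.
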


\begin{proof}
As in the proof of Theorem~5.7 (4) of \cite{mt:ideals} (see also \cite[Proposition~3.4]{bhrs:idealstrucgraph}), every ideal of a graph $C^{*}$-algebra satisfying Condition (K) is Morita equivalent to $C^{*}(F)$, where $F^{0} \subseteq E^{0}$.  Hence, (1) holds since a graph $C^{*}$-algebra $C^{*} (E)$ is unital if and only if $E^{0}$ is finite.  (2) follows from (1) and \cite[Corollary~3.5]{bhrs:idealstrucgraph}.  (3) follows from (2) and \cite[Theorem~3.1]{ddmt:kthy}.

Suppose $C^{*} (F)$ is a simple unital AF-algebra.  Then $F$ has no cycles.  Since $C^{*} (F)$ is unital, $F^{0}$ is finite.  Therefore, $F$ has a sink.  By \cite[Corollary~2.15]{ddmt:cag}, every singular vertex must be reached by any other vertex since $C^{*} (F)$ is simple.  Thus, $F$ must be a finite graph.  Hence, $C^{*} (F) \cong \mathsf{M}_{n}$.  From this observation, (4) follows from (1) and (2) since any non-unital simple $C^{*}$-algebra stably isomorphic to $\K$ is isomorphic to $\K$.
\end{proof}

See \cite{ir:ga} and the references therein for more on graph \Cas.

\section{General theory}
We first describe the situations in which the graph algebras can be classified using widely applicable results.

\subsection{The \textit{AF} case}
The \textit{AF} case corresponds to temperatures that are constantly 0. We incur these at the tempered signatures 
\myref{1}{0}{0},
\myref{2}{1}{0},
\myref{3}{3}{0},
\myref{3}{6}{0},
\myref{3}{7}{0},
\myref{4}{A}{0},
\myref{4}{E}{0},
\myref{4}{F}{0},
\myref{4}{1E}{0},
\myref{4}{1F}{0},
\myref{4}{38}{0},
\myref{4}{39}{0},
\myref{4}{3B}{0},
\myref{4}{3E}{0}, and
\myref{4}{3F}{0}. Of course the classification question is resolved by Elliott's theorem:

\begin{theorem}[\cite{gae:cilssfa}]\label{AF}
\AFas are classified up to stable isomorphism by their ordered $K_0$-group.
\end{theorem}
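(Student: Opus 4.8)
The forward direction is immediate: $K_0$ is a functor that is invariant under stable isomorphism and preserves the order on $K_0$, so stably isomorphic \AFas have isomorphic ordered $K_0$-groups. The content is the converse, and the plan is to prove it by Elliott's intertwining argument. First I would write $A=\overline{\bigcup_n A_n}$ and $B=\overline{\bigcup_n B_n}$ as inductive limits of finite-dimensional \Cas $A_n,B_n$, each a finite direct sum of matrix algebras. Since $K_0$ is continuous, $K_0(A)=\varinjlim K_0(A_n)$ and $K_0(B)=\varinjlim K_0(B_n)$, where each $K_0(A_n)\cong\Z^{r_n}$ carries the standard coordinatewise order.

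The two technical inputs are: (i) a \emph{realization lemma} -- every positive group homomorphism $K_0(F)\to K_0(G)$ between the $K_0$-groups of finite-dimensional \Cas is induced by a \shom $F\to G$; and (ii) a \emph{uniqueness lemma} -- two \shoms $F\to G$ inducing the same map on $K_0$ are unitarily equivalent. Statement (i) is essentially combinatorial: a positive map $\Z^r\to\Z^s$ is encoded by a nonnegative integer multiplicity matrix, and any such matrix is realized by block-diagonal embeddings of matrix algebras. Statement (ii) for finite-dimensional algebras reduces to the fact that representations of a finite-dimensional \Ca are determined up to unitary equivalence by their multiplicities.

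With these in hand, given an ordered-group isomorphism $\phi\colon K_0(A)\to K_0(B)$ with inverse $\psi$, I would exploit the finite generation of each $K_0(A_n)$: the composite $K_0(A_n)\to K_0(A)\xrightarrow{\phi}K_0(B)$ factors through $K_0(B_m)$ for $m$ large, and symmetrically for $\psi$. Here one must take $m$ large enough that the (positive) generators of $K_0(A_n)$ already lift to positive elements at the finite stage $B_m$, using that positivity is detected at a finite stage of the limit; this makes the factored map positive. Passing to subsequences, this produces a commuting diagram of ordered groups interleaving the two systems. Lifting each of these $K_0$-maps by the realization lemma gives \shoms between the finite-dimensional stages, and the uniqueness lemma shows the resulting triangles commute up to unitary conjugation. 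Conjugating successively by these unitaries makes the interleaved diagram approximately intertwine, and Elliott's approximate intertwining theorem then yields an isomorphism $A\cong B$ of the limits.

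Finally, for the \emph{stable} formulation I would apply the above to the stabilizations $A\otimes\K$ and $B\otimes\K$, which are again \AFas. Stabilization does not change the ordered group $K_0$, and the scale of a stable \AFa is its whole positive cone; hence an isomorphism of ordered $K_0$-groups automatically respects the scale data, so no order unit needs to be preserved. The main obstacle is the intertwining step: arranging the lifted \shoms so that the interleaved diagram genuinely commutes up to the prescribed tolerances. This is exactly where the uniqueness lemma is used to absorb each discrepancy into a unitary, and where one must pass to subsequences carefully so that each newly lifted map is compatible with all previously chosen ones.
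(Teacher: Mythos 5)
The paper itself gives no proof of this statement: it is quoted directly from Elliott's paper \cite{gae:cilssfa}, so the only benchmark is the classical argument, and your proposal is precisely that argument --- inductive limits of finite-dimensional algebras, existence and uniqueness lemmas at the finite-dimensional level, approximate intertwining, and stabilization to dispose of the scale. So you are on the same route as the cited proof, and the architecture is sound.

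One step needs repair, because your realization lemma (i) is false as stated. A positive homomorphism $K_0(F)\to K_0(G)$ between finite-dimensional \Cas need not be induced by any \shom $F\to G$: multiplication by $2$ on $K_0(\C)\cong\Z$ is positive, yet no \shom $\C\to\C$ induces it, since the nonnegative multiplicity matrix of a \shom must also satisfy the dimension constraints $\sum_j a_{ij}n_j\le m_i$ imposed by the block sizes of $G$. The correct hypothesis is that the map carries the scale of $F$ into the scale of $G$ (equivalently, the image of $[1_F]$ is the class of a projection in $G$). Correspondingly, in your factorization step it does not suffice to choose $m$ so large that the generators of $K_0(A_n)$ lift to \emph{positive} elements of $K_0(B_m)$ --- a positive class in $K_0(B_m)$ need not lie in the scale of $B_m$ (take $B_m=\C$ and the class $2$) --- you must choose $m$ so large that they lift \emph{into the scale}, i.e., are represented by projections in $B_m$ with room to spare for the multiplicities. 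This is exactly where your closing observation about stability does the real work: since $B\otimes\K$ is stable its scale is all of $K_0(B\otimes\K)^+$, and the scale of an inductive limit is the increasing union of the images of the scales of the finite stages (any projection in the limit is close to, hence unitarily equivalent to, one in some $B_m$), so a suitable $m$ always exists. With lemma (i) restated in its scale-preserving form and this enlargement of $m$ made explicit, your intertwining argument goes through and recovers the theorem exactly as Elliott proved it.
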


\subsection{The purely infinite case}
Recall that there are three notions of pure infiniteness for non-simple \Cas, namely \emph{pure infiniteness}, \emph{strong pure infiniteness}, and \emph{$\mathcal{O}_\infty$-absorption}, introduced by E. Kirchberg and M. R\o{}rdam; cf.~\cite{ekmr:purelyinf} and~\cite{ekmr:oinftyabs}. 

\begin{corollary}
For each nuclear, separable \Ca \A with finite primitive ideal space, the following are equivalent: 
\begin{enumerate}[(a)]
\item
\A is purely infinite,
\item
\A is strongly purely infinite,
\item 
\A is \Oia, i.e., $\A\otimes\mathcal{O}_\infty\cong\A$.
\end{enumerate}
\end{corollary}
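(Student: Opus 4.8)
The plan is to prove the three properties equivalent by running the cycle $(c)\Rightarrow(b)\Rightarrow(a)\Rightarrow(c)$, in which the first two links hold for \emph{every} separable nuclear \Ca and only the last link uses the finiteness of $\Prim(\A)$. All the ingredients are theorems of Kirchberg and R\o{}rdam from the two cited papers \cite{ekmr:purelyinf} and \cite{ekmr:oinftyabs}, so the task is really one of assembling them correctly and checking that the hypothesis on the ideal space feeds into the one place it is needed.

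First I would dispose of the two implications that require nothing about the ideal structure. For $(c)\Rightarrow(b)$ I would use that $\A\otimes\mathcal{O}_\infty$ is strongly purely infinite for an arbitrary \Ca $\A$ \cite{ekmr:oinftyabs}; since \Oia precisely means $\A\cong\A\otimes\mathcal{O}_\infty$, strong pure infiniteness of $\A$ is then immediate. For $(b)\Rightarrow(a)$ there is nothing to do beyond unwinding definitions: strong pure infiniteness is a formal strengthening of pure infiniteness \cite{ekmr:oinftyabs}. Neither step touches $\Prim(\A)$.

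The remaining link $(a)\Rightarrow(c)$ is where the real content lies, and I would obtain it in two stages. The first stage is the Kirchberg--R\o{}rdam theorem asserting that a separable nuclear purely infinite \Ca whose primitive ideal space has finite topological dimension is automatically strongly purely infinite; to apply it I would observe that a finite primitive ideal space is an Alexandrov $T_0$-space whose covering dimension is bounded by its cardinality, hence finite, so the hypothesis is met and we get $(a)\Rightarrow(b)$. The second stage is Kirchberg's absorption theorem: for separable nuclear \Cas, strong pure infiniteness is equivalent to \Oia \cite{ekmr:oinftyabs}, which supplies $(b)\Rightarrow(c)$ and closes the cycle.

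The hard part will be the first stage of this last link. Pure and strong pure infiniteness genuinely differ for general \Cas, so the crux of the corollary is exactly the assertion that finiteness of $\Prim(\A)$ furnishes the dimension bound needed to collapse the distinction; once that is in place, everything else is either definitional or the general structure theory of $\mathcal{O}_\infty$-absorption. I would therefore expect the write-up to be short, with the substance concentrated in citing the correct Kirchberg--R\o{}rdam statement and verifying the elementary point that a finite $T_0$-space is finite-dimensional.
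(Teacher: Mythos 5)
Your dispatch of $(c)\Rightarrow(b)$ and $(b)\Rightarrow(a)$ is correct and coincides with the paper, which cites Theorem~9.1 and Corollary~9.2 of \cite{ekmr:oinftyabs} for precisely these links. The gap is the first stage of your $(a)\Rightarrow(c)$: there is no Kirchberg--R{\o}rdam theorem in \cite{ekmr:purelyinf} or \cite{ekmr:oinftyabs} asserting that a separable, nuclear, purely infinite \Ca whose primitive ideal space has finite topological dimension is strongly purely infinite. Whether pure infiniteness implies strong pure infiniteness is exactly the open problem raised in \cite{ekmr:oinftyabs}; the dimension-type partial result you appear to be reaching for (due to Blanchard and Kirchberg, and not in the papers you cite) requires $\Prim(\A)$ to be \emph{Hausdorff} of finite dimension. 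A connected finite $T_0$-space with more than one point is never Hausdorff, so that theorem is inapplicable here, and your (correct) observation that a finite space has finite covering dimension does not rescue the step. As written, the implication $(a)\Rightarrow(b)$ is unsupported, and it carries the entire content of the corollary.

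The paper sidesteps the purely-infinite-versus-strongly-purely-infinite question altogether and proves $(a)\Rightarrow(c)$ directly, closing the cycle as $(a)\Rightarrow(c)\Rightarrow(b)\Rightarrow(a)$. Concretely: by Proposition~3.5 of \cite{ekmr:oinftyabs}, pure infiniteness passes to ideals and quotients, so every simple subquotient of \A is purely infinite, hence \Oia because the three notions coincide in the simple case (again Theorem~9.1 and Corollary~9.2 of \cite{ekmr:oinftyabs}, with nuclearity and separability used here); since $\Prim(\A)$ is finite, \A has a finite composition series with these simple subquotients, and $\mathcal{O}_\infty$-absorption is preserved under extensions of separable \Cas by \cite{atww:selfabs}, so induction along the finite ideal lattice yields that \A itself is \Oia. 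This is where the finiteness hypothesis genuinely enters --- through the finite composition series, not through any dimension bound --- and if you want to repair your outline you should replace your ``stage one'' with this subquotient-plus-permanence argument.
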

\begin{proof}
It follows from Theorem~9.1 and Corollary~9.2 of \cite{ekmr:oinftyabs} that (c) implies (b), that (b) implies (a), and that the three coincide in the simple case. It follows from Proposition~3.5 of \cite{ekmr:oinftyabs}, that pure infiniteness passes to ideals and subquotients. 
Thus it follows from \cite{atww:selfabs} that (a) implies (c). 
\end{proof}

The purely infinite case (the \Oia case) corresponds to temperatures that are constantly 1. We incur these at the tempered signatures 
\myref{1}{0}{1},
\myref{2}{1}{3},
\myref{3}{3}{7},
\myref{3}{6}{7},
\myref{3}{7}{7},
\myref{4}{A}{15},
\myref{4}{E}{15},
\myref{4}{F}{15},
\myref{4}{1E}{15},
\myref{4}{1F}{15},
\myref{4}{38}{15},
\myref{4}{39}{15},
\myref{4}{3B}{15},
\myref{4}{3E}{15}, and
\myref{4}{3F}{15}. As we will outline below, all but the case \myref{4}{1E}{15} are resolved through the recent work of many hands.

The isomorphism result of Kirchberg (cf.~\cite{ek:cpicukt} and~\cite{ek:nkmkna}) reduces the classification problem of nuclear and strongly purely infinite \Cas which are also in the bootstrap class to an isomorphism problem in ideal-related $\kk$-theory. Since all purely infinite graph \Cas fall in this class we may hence confirm Conjecture \ref{woco} in the purely infinite case by providing a universal coefficient theorem which allows the lifting of isomorphisms at the level of filtered $K$-theory to invertible $\kk_X$-classes. This, however, is not known to be possible in general. Indeed, Meyer and Nest in \cite{rmrn:ctsfk} showed that there are purely infinite \Cas over the space \msp{4}{A} which fails to have this property, but since the examples provided there cannot possibly come from graph algebras, the question remains open in general. The work of Bentmann and K\"ohler established that general UCTs are available precisely when the space $X$ is an accordion space, and Arklint with the second and third named authors provided UCTs for other spaces, including \msp{4}{A}, under the added assumption that the \Ca has real rank zero which is automatic here. Specializing even further, Arklint, Bentmann and Katsura provided a UCT which applies for our space \msp{4}{3B} under the added assumption that the \Ca has real rank zero and that the $K_1$ groups of all subquotients are free, which also is automatic here. The space \msp{4}{1E} remains open. 
In conclusion:

\begin{theorem}\label{PI}
Purely infinite, separable, nuclear \Cas \A with finite primitive ideal space $X$ in the bootstrap class of Meyer and Nest (i.e., all simple subquotients are in the bootstrap class of Rosenberg and Schochet) are classified up to stable isomorphism by their filtered $K$-theory $\FK{X}{-}$ in the cases
\begin{enumerate}[(i)]
\item $X$ is an accordion space [\msp{1}{0}, \msp{2}{1}, \msp{3}{3}, \msp{3}{6}, \msp{3}{7}, \msp{4}{E}, \msp{4}{F}, \msp{4}{39}, \msp{4}{3F}] (\cite{ek:cpicukt}, \cite{ncp:pureinf}, \cite{mk:ext}, \cite{gr:ckalg}, \cite{rmrn:ctsfk}, \cite{rbmk:uctcfts}, \cite{ek:nkmkna})
\item $X$ is one of the spaces \msp{4}{A}, \msp{4}{38}, \msp{4}{1F}, \msp{4}{3E} and $\operatorname{rr}(\A)=0$. (\cite{sagrer:fkrrzc})
\item $X$ is the space \msp{4}{3B}, $\operatorname{rr}(\A)=0$, and $K_1(\IdealJ/\IdealI)$ is free for any  $\IdealI\triangleleft \IdealJ \unlhd \A$ (\cite{sarbtk:rfkccka})
\end{enumerate}
\end{theorem}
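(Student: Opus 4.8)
The plan is to reduce the classification to an isomorphism problem in ideal-related $\kk$-theory and then to import, case by case, a Universal Coefficient Theorem lifting filtered $K$-theory isomorphisms to invertible $\kk_X$-classes. Let $\B$ be a second \Ca of the same type, with $\Prim(\B)=X$. The only nontrivial implication to establish is that $\FK{X}{\A}\cong\FK{X}{\B}$ forces $\A$ and $\B$ to be stably isomorphic; the converse is mere functoriality of $\FK{X}{-}$. By the preceding Corollary, nuclearity, separability, and finiteness of $X$ ensure that pure infiniteness, strong pure infiniteness, and $\mathcal{O}_\infty$-absorption coincide, so both $\A$ and $\B$ are strongly purely infinite and \Oia. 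Together with the Meyer--Nest bootstrap hypothesis this places us in the setting of Kirchberg's isomorphism theorem (\cite{ek:cpicukt}, \cite{ek:nkmkna}), by which $\A$ and $\B$ are stably isomorphic precisely when $\kk_X(\A,\B)$ contains an invertible element.

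It therefore remains to produce, from an isomorphism of filtered $K$-theory, an invertible $\kk_X$-class, and this is exactly what a UCT supplies: a natural short exact sequence whose middle term is $\kk_X(\A,\B)$ and whose outer terms are $\operatorname{Ext}$ and $\operatorname{Hom}$ groups computed from $\FK{X}{\A}$ and $\FK{X}{\B}$. Given such a sequence, an isomorphism $\FK{X}{\A}\cong\FK{X}{\B}$ yields an invertible element of the $\operatorname{Hom}$-term, which by the standard UCT argument lifts to an invertible class in $\kk_X(\A,\B)$. I would then invoke the appropriate UCT in each regime: for (i) the theorem of Bentmann and K\"ohler (\cite{rbmk:uctcfts}), which establishes a general UCT precisely for accordion spaces, synthesised with the earlier partial results assembled in the cited references; for (ii) the UCT of \cite{sagrer:fkrrzc}, valid for \msp{4}{A}, \msp{4}{38}, \msp{4}{1F}, \msp{4}{3E} under the real rank zero hypothesis; and for (iii) the UCT of \cite{sarbtk:rfkccka} for \msp{4}{3B} under real rank zero together with freeness of the $K_1$-groups of all subquotients.

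The hard part is not the assembly above but the construction of the UCTs themselves, which I would import wholesale rather than reprove. The genuine difficulty is structural: filtered $K$-theory is simply not a complete invariant for arbitrary purely infinite \Cas over these spaces. Meyer and Nest exhibited inequivalent algebras over \msp{4}{A} with isomorphic filtered $K$-theory, so some extra hypothesis is unavoidable, and the role of the real rank zero condition in (ii) and (iii) is precisely to constrain the range of the invariant enough for the UCT to hold; the freeness assumption in (iii) plays the same role for the more delicate space \msp{4}{3B}. The persistence of the open space \msp{4}{1E} reflects exactly the fact that no such UCT is presently available there, so the reduction to $\kk_X$ cannot be completed by filtered $K$-theory alone.
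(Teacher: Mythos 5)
Your proposal is correct and follows essentially the same route as the paper, which establishes Theorem~\ref{PI} precisely by invoking Kirchberg's theorem (\cite{ek:cpicukt}, \cite{ek:nkmkna}) to reduce stable isomorphism to invertibility in ideal-related $\kk$-theory and then importing the UCTs of \cite{rbmk:uctcfts} for accordion spaces, \cite{sagrer:fkrrzc} for \msp{4}{A}, \msp{4}{38}, \msp{4}{1F}, \msp{4}{3E} under real rank zero, and \cite{sarbtk:rfkccka} for \msp{4}{3B} under real rank zero with free $K_1$ of subquotients. Your closing remarks on the Meyer--Nest counterexample over \msp{4}{A} and the still-open space \msp{4}{1E} likewise mirror the paper's own discussion preceding the theorem.
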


\subsection{The separated case}\label{sep}
The classification problem for the two \emph{mixed} cases with $|\Prim(\A)|=2$ not covered by the results mentioned above -- the tempered signatures \myref{2}{1}{1} and \myref{2}{1}{2} -- were resolved in \cite{semt:cnga} drawing heavily on \cite{segrer:cecc}. In \cite{segrer:ccfis}, we generalized this to more complicated cases having the separation property which is automatic in the two-point case, as detailed below. The idea is to find an ideal $\IdealI$ such that $\IdealI$ is \textit{AF} and $\A/\IdealI$ is \Oia, or vice versa. We do not know in general how to prove classification in this case, but under certain added assumptions related to the notion of fullness, this leads to results that may be used to resolve the cases of tempered signature \myref{3}{7}{1}, \myref{3}{7}{3}, \myref{4}{F}{1}, \myref{4}{1F}{1}, \myref{4}{1F}{3}, \myref{4}{3B}{1}, \myref{4}{3F}{1}, \myref{4}{3F}{3}, \myref{4}{3F}{7} by Proposition \ref{prop:classgraph1} below and \myref{3}{7}{4}, \myref{3}{7}{6}, \myref{4}{39}{8}, \myref{4}{3B}{8}, \myref{4}{3E}{8}, \myref{4}{3E}{12}, \myref{4}{3F}{8}, \myref{4}{3F}{12}, \myref{4}{3F}{14} by Proposition \ref{prop:classgraph2}. 


\begin{definition}
Let $n>1$ be a given integer. 
Then we let $\X_n$ denote the partially ordered set (actually totally ordered) $\X_n=\{1,2,\ldots,n\}$ with the usual order. 
For $a,b\in \X_n$ with $a\leq b$, we let $[a,b]$ denote the set $\setof{x\in \X_n}{a\leq x \leq b}$.
\end{definition}

\begin{proposition}\label{prop:classgraph1}
Let $\A_{1}$ and $\A_{2}$ be separable, nuclear, \Cas over $\X_{n}$ in the bootstrap class of Meyer and Nest (i.e., every simple subquotient is in the bootstrap class of Rosenberg and Schochet).  Suppose $\A_{i}(\{ 1\})$ is an \AFa and $\A_{i}([2, n])$ is a tight stable $\mathcal{O}_{\infty}$-absorbing \Ca over $[2, n]$, and $\A_{i}(\{ 2 \})$ is an essential ideal of $\mathfrak{ A }_{i}([1 , 2 ])$.  Then $\A_{1} \otimes \K \cong \A_{2} \otimes \K$ if and only if there exists an isomorphism $\ftn{ \alpha }{ \FK{\X_{n}}{\A_{1}} } { \FK{\X_{n}}{\A_{2}} }$ such that $\alpha_{ \{ 1 \} }$ is positive. 
\end{proposition}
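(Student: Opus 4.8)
The plan is to prove the easy implication by functoriality and the substantive one by classifying the purely infinite ideal and the AF quotient separately and then gluing them along the extension. For the forward direction, recall that $\FKplus{\X_n}{-}$ is a stable-isomorphism invariant: a $*$-isomorphism $\A_1 \otimes \K \to \A_2 \otimes \K$ carries the ideal lattice of $\A_1$ onto that of $\A_2$ and hence induces isomorphisms on the $K$-groups of all subquotients commuting with $\{\iota_*, \pi_*, \partial_*\}$, and these are order isomorphisms on $K_0$. In particular the induced $\alpha_{\{1\}}$ is positive, so it remains to prove the converse.

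For the converse, the hypotheses exhibit each $\A_i$ as an extension
\[
0 \longrightarrow \A_i([2,n]) \longrightarrow \A_i \longrightarrow \A_i(\{1\}) \longrightarrow 0
\]
with purely infinite ideal $\A_i([2,n])$ and simple AF quotient $\A_i(\{1\})$. I would first classify the two ends. Since $\X_n$, and hence $[2,n]$, is totally ordered, $[2,n]$ is an accordion space; restricting $\alpha$ to the ideal gives an isomorphism $\FK{[2,n]}{\A_1([2,n])} \to \FK{[2,n]}{\A_2([2,n])}$, so \tref{PI}(i) supplies a $*$-isomorphism $\psi : \A_1([2,n]) \otimes \K \to \A_2([2,n]) \otimes \K$ inducing it. Independently, positivity of $\alpha_{\{1\}}$ and \tref{AF} supply a $*$-isomorphism $\phi : \A_1(\{1\}) \otimes \K \to \A_2(\{1\}) \otimes \K$ inducing $\alpha_{\{1\}}$.

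The heart of the matter is to assemble $\phi$ and $\psi$ into an isomorphism of the two stabilized extensions. After tensoring with $\K$ the ideals are stable, so each extension is encoded by its Busby invariant, equivalently by a class in $\kk^1(\A_i(\{1\}), \A_i([2,n]))$. The essentiality hypothesis---that $\A_i(\{2\})$ is essential in $\A_i([1,2])$---forces the Busby map to be injective, so both extensions are essential; this is the fullness-type condition under which the separated-case machinery of \cite{segrer:ccfis} (itself resting on \cite{segrer:cecc}) is available. The strategy is then to check that the compatibility of $\alpha$ with the index and exponential maps $\partial_*$ identifies the $\mathrm{Hom}$-components of the two extension classes under $(\phi, \psi)$, and to invoke the uniqueness theorem for essential, absorbing extensions of an AF algebra by a stable $\mathcal{O}_\infty$-absorbing algebra to upgrade this to a genuine $*$-isomorphism $\A_1 \otimes \K \to \A_2 \otimes \K$ that restricts to $\psi$ and descends to $\phi$.

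The main obstacle is precisely this gluing. The two end-classifications are immediate from the cited theorems; the difficulty is that, by the UCT, a class in $\kk^1(\A_i(\{1\}), \A_i([2,n]))$ is determined by the boundary data recorded in $\FK{\X_n}{\A_i}$ only up to an $\mathrm{Ext}^1_{\Z}$-ambiguity (here $\mathrm{Ext}^1_{\Z}(K_0(\A_i(\{1\})), K_1(\A_i([2,n])))$), which the filtered $K$-theory does not see directly. When $K_0(\A_i(\{1\}))$ is free this term vanishes and the boundary maps pin the extension class down; in general one must exploit the essentiality together with the remaining freedom in the choices of $\phi$ and $\psi$ within their invariant-preserving classes, following \cite{segrer:ccfis}, and showing that these choices suffice to match the full extension classes is where the real work lies.
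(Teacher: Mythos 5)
Your overall architecture --- Elliott on the AF quotient, Kirchberg on the \Oia ideal, then gluing along the extension via a uniqueness theorem for full extensions and Theorem~2.2 of \cite{elp:mecpfbi} --- matches the intended argument; note that the paper states this proposition without proof (it is imported from \cite{segrer:ccfis}), but it runs exactly this machinery explicitly in the proof of Theorem~\ref{t:class1}. However, your proposal stalls at its self-declared crux, and the repair you sketch is not the one that works. The $\mathrm{Ext}^1_{\Z}$-ambiguity is not overcome by ``exploiting the remaining freedom in the choices of $\phi$ and $\psi$''; chasing such adjustments through the six-term ladder would amount to re-proving a UCT. What actually closes the gap is that $\X_n$ is linear, hence an accordion space, so Theorem~4.14 of \cite{rmrn:ctsfk} lifts the given isomorphism $\alpha$ of filtered $K$-theories to an invertible element $x\in\kk(\X_n;\A_1,\A_2)$ in one stroke. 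By Kirchberg \cite{ek:nkmkna}, the restriction $x_{[2,n]}$ lifts to an isomorphism $\psi$ of the tight, stable, \Oia ideals (your appeal to Theorem~\ref{PI}(i) only produces an isomorphism inducing the filtered $K$-theory data, which is too weak for the gluing; you need one realizing the ideal-related $\kk$-class). On the quotient side, since both $\A_i(\{1\})$ are \AFas, $K_1$ of the target vanishes, so the Elliott isomorphism $\phi$ with $K_0(\phi)=\alpha_{\{1\}}$ automatically satisfies $\kk(\phi)=x_{\{1\}}$; this forced uniqueness of the lift is precisely the step ``since $\kk(\beta_{[m,x)})$ is the unique lifting'' in the proof of Theorem~\ref{t:class1}. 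Finally, Theorem~3.3 of \cite{segrer:ccfis} says that an invertible element of $\kk(X;-,-)$ intertwines the Busby classes of the (essential) extensions, giving $x_{\{1\}}\times[\busby_{\mathfrak{e}_2}]=[\busby_{\mathfrak{e}_1}]\times x_{[2,n]}$, that is, $[\busby_{\mathfrak{e}_2}\circ\phi]=[\overline{\psi}\circ\busby_{\mathfrak{e}_1}]$ in $\kk^1(\A_1(\{1\}),\A_2([2,n]))$ --- exactly the identity you needed before invoking absorption, the corona factorization property of the stable \Oia ideal, and Theorem~2.2 of \cite{elp:mecpfbi}.

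A second, smaller inaccuracy: you conflate essential with full (``forces the Busby map to be injective, so both extensions are essential; this is the fullness-type condition''). These coincide only when $n=2$, where the ideal is simple and purely infinite so that $\corona{\A_i([2,n])}$ is simple. For $n\geq 3$ the ideal $\A_i([2,n])$ is non-simple, its corona has proper ideals, and an injective Busby map need not be full; moreover $\A_i$ is not assumed tight over $\X_n$, so $\A_i(\{1\})$ need not be simple and injectivity is not even automatic. The hypothesis that $\A_i(\{2\})$ be essential in $\A_i([1,2])$ is calibrated for exactly this point: it says the pushforward of $\mathfrak{e}_i$ along the quotient of $\A_i([2,n])$ onto its top simple quotient $\A_i(\{2\})$ remains essential, and combined with $\mathcal{O}_\infty$-absorption and the linear ideal lattice this yields fullness of $\busby_{\mathfrak{e}_i}$ via Corollary~5.3/5.6 of \cite{segrer:ccfis} (the same step appears in the proof of Theorem~\ref{t:class1}). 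Without fullness, the Elliott--Kucerovsky/Kucerovsky--Ng uniqueness step you invoke is simply unavailable.
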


\begin{proposition}\label{prop:classgraph2}
Let $\A_{1}$ and $\A_{2}$ be graph \Cas satisfying Condition (K).  Suppose $\A_{i}$ is a \Ca over $\X_{n}$ such that $\A_{i}(\{ n \})$ is an \AFa, for every ideal \IdealI of $\A_{i}$ we have that $\IdealI \subseteq \A_{i}(\{ n \})$ or $\A_{i}(\{ n \}) \subseteq \IdealI$, and $\A_{i}([1, n-1])$ is a tight, $\mathcal{O}_{ \infty }$-absorbing \Ca over $[1,n-1]$.  Then $\A_{1} \otimes \K \cong \A_{2} \otimes \K$ if and only if there exists an isomorphism $\ftn{ \alpha }{ \FK{\X_{n}}{\A_{1}} } { \FK{\X_{n}}{\A_{2}} }$ such that $\alpha_{ \{ n \} }$ is positive. 
\end{proposition}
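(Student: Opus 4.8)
The plan is to treat the two implications separately, with the reverse direction reduced to the classification of extensions developed in \cite{segrer:ccfis}. The forward direction is routine functoriality: a stable isomorphism $\A_{1}\otimes\K\cong\A_{2}\otimes\K$ induces, by naturality of $K$-theory and of the six-term exact sequences, an isomorphism $\alpha$ of filtered $K$-theory, and since it is order-preserving on the AF subquotient, the component $\alpha_{\{n\}}$ is positive. For the reverse direction I would first pass to the stabilizations and record the standing regularity supplied by the fact that the $\A_{i}$ are graph $C^{*}$-algebras satisfying Condition (K): every subquotient is stably isomorphic to a unital graph $C^{*}$-algebra with finitely generated $K$-groups and real rank zero, and every simple subquotient lies in the bootstrap class. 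Since $\{n\}$ is the smallest nonzero open set of $\X_{n}$, the algebra $\A_{i}(\{n\})$ is the corresponding minimal ideal and $[1,n-1]$ is its closed complement, so each $\A_{i}$ sits in an extension
\[
0 \to \A_{i}(\{n\}) \to \A_{i} \to \A_{i}([1,n-1]) \to 0
\]
of the tight, $\Oia$ algebra $\A_{i}([1,n-1])$ over $[1,n-1]$ by the \AFa $\A_{i}(\{n\})$.

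Next I would classify the two ``ends'' of the extension. Because $\A_{i}(\{n\})$ is AF and $\alpha_{\{n\}}$ is a positive $K_{0}$-isomorphism, Elliott's theorem (\tref{AF}) lifts it to a $*$-isomorphism $\A_{1}(\{n\})\cong\A_{2}(\{n\})$. For the quotient, observe that $[1,n-1]=\X_{n-1}$ is totally ordered, hence linear and in particular an accordion space; the classification of tight $\Oia$ algebras over accordion spaces (see \tref{PI} and \cite{rbmk:uctcfts,ek:cpicukt}) then promotes the restriction of $\alpha$ to a $*$-isomorphism $\A_{1}([1,n-1])\cong\A_{2}([1,n-1])$ inducing $\alpha$ on all subquotients over $[1,n-1]$.

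The crux, and the step I expect to be the main obstacle, is to combine these into an isomorphism $\A_{1}\cong\A_{2}$ compatible with the extensions. Here I would first use the comparability hypothesis to see that $\A_{i}(\{n\})$ is an \emph{essential} ideal: any nonzero ideal $\IdealI$ either contains or is contained in $\A_{i}(\{n\})$, and in either case $\IdealI\cap\A_{i}(\{n\})\neq 0$. Thus each extension above is essential, its Busby invariant is injective, and it is determined up to congruence by that invariant. The filtered $K$-theory isomorphism $\alpha$ encodes the boundary maps $\partial_{*}$ of the two six-term sequences, so the isomorphisms of the ends automatically intertwine the boundary data; the genuine difficulty is to upgrade this $K$-theoretic compatibility to a congruence of the Busby invariants themselves. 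This is precisely the universal-coefficient-type statement for \AFa-by-$\Oia$ extensions established in \cite{segrer:ccfis} (building on \cite{segrer:cecc}), and the essentiality just verified is the fullness hypothesis under which it applies.

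Invoking that classification result -- the order-dual of the configuration treated in \pref{prop:classgraph1} -- then yields a stable isomorphism $\A_{1}\otimes\K\cong\A_{2}\otimes\K$ inducing $\alpha$, which completes the proof. I expect the only nontrivial verifications to be the translation of the graph-algebraic hypotheses into the abstract fullness and bootstrap conditions required by \cite{segrer:ccfis}, all of which are furnished by the Condition (K) regularity recorded at the outset.
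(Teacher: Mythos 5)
Your architecture is the intended one: the paper states this proposition without proof, importing it from \cite{segrer:ccfis}, and the route there is exactly your outline --- stabilize, lift the positive isomorphism $\alpha_{\{n\}}$ of the \AFa ideal by Elliott's theorem (with its $\kk$-lift unique because the ideal is AF), lift $\alpha_{[1,n-1]}$ by the Meyer--Nest UCT for the totally ordered space $[1,n-1]$ together with Kirchberg's theorem, check compatibility of the two lifts with the class of the extension $0 \to \A_{i}(\{n\}) \to \A_{i} \to \A_{i}([1,n-1]) \to 0$ in $\kk^{1}$, and then use the corona factorization property and the absorption machinery (\cite{gaedk:avbfat}, \cite{dkpwn:cfpaue}, Theorem~2.2 of \cite{elp:mecpfbi}) to conjugate Busby maps and conclude. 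The forward implication is likewise fine.

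The genuine gap is your final verification, where you write that ``the essentiality just verified is the fullness hypothesis under which it applies.'' Essentiality of $\A_{i}(\{n\})$ does follow from the comparability condition, and it is needed, but it is \emph{not} fullness. Fullness requires the Busby map $\busby : \A_{i}([1,n-1])\otimes\K \to \corona{\A_{i}(\{n\})\otimes\K}$ to send every nonzero positive element to a full element of the corona. The mechanism you have implicitly transported is the one behind \pref{prop:classgraph1}: there the relevant ideal ($\A_{i}(\{2\})$, assumed essential in $\A_{i}([1,2])$) is simple, purely infinite and stable, so its corona is \emph{simple} and injectivity of the Busby map immediately gives fullness. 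Here, by contrast, the ideal is a stable \AFa, whose corona is very far from simple (coronas of stabilized infinite-dimensional AF algebras carry many ideals, e.g.\ detected by traces), so an injective Busby map can have a non-full image; essentiality buys you nothing toward fullness. This is precisely why the hypothesis of \pref{prop:classgraph2} is not an essentiality assumption but the ideal-comparability condition ($\IdealI \subseteq \A_{i}(\{n\})$ or $\A_{i}(\{n\}) \subseteq \IdealI$ for every ideal $\IdealI$): in \cite{segrer:ccfis}, resting on \cite[Proposition~3.10]{semt:cnga} and \cite[Lemma~1.5]{segrer:cecc}, fullness of such AF-ideal extensions is derived from this lattice condition together with graph-algebra--specific structure (Condition (K), real rank zero, the description of ideals by saturated hereditary sets) --- compare how the present paper always obtains fullness on the AF side via these results (Lemma~\ref{l:fullmixed}(ii)--(iii), \pref{p:full}) and never from essentiality. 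Without replacing your essentiality claim by this graph-theoretic fullness argument, the appeal to the classification theorem of \cite{segrer:ccfis} (whose uniqueness step, via Elliott--Kucerovsky absorption and the corona factorization property, genuinely needs full extensions) does not go through; with it, the rest of your sketch is the standard conclusion.
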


\section{Fan spaces}
\label{sec:fanspaces}

In this section, we develop methods to deal mainly with the spaces \msp{3}{3}, \msp{3}{6}, \msp{4}{A}, \msp{4}{38}.
We observe the following in \cite{segrer:ccfis}

\begin{lemma}\label{cfpbasics}
Let $E$ be a graph such that $C^*(E)$ has finitely many ideals and assume that $\IdealI \triangleleft \IdealJ \unlhd C^*(E)$ are ideals.
Then
\begin{enumerate}[(i)]
\item $C^*(E)\otimes\K$ has the corona factorization property 
\item $(\IdealJ/ \IdealI )\otimes\K$ is of the form $C^*(F)\otimes\K$ for some graph $F$
\item $(\IdealJ / \IdealI )\otimes\K$ has the corona factorization property
\end{enumerate}
\end{lemma}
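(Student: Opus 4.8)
The plan is to prove the three claims in sequence, since each builds on the previous one. The key structural input is the known result (referenced implicitly via the theorem on graph $C^*$-algebras) that every subquotient of a graph algebra satisfying Condition (K) is again (stably) a graph algebra, together with the established fact that graph $C^*$-algebras enjoy the corona factorization property.

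\textbf{Part (i).} First I would invoke the hypothesis that $C^*(E)$ has finitely many ideals. By the discussion in the excerpt, this forces $E$ to satisfy Condition (K), so $C^*(E)$ has real rank zero and is stably isomorphic to a unital graph $C^*$-algebra. The corona factorization property is a stable-isomorphism invariant, so it suffices to verify it for a unital graph $C^*$-algebra. The plan is to quote the known result that every unital (equivalently, every separable) graph $C^*$-algebra has the corona factorization property; this is precisely where I would cite the relevant literature result that $C^*(E)\otimes\K$ has the corona factorization property for any graph $E$. Since the property is preserved under tensoring with $\K$, statement (i) follows directly.

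\textbf{Part (ii).} Next I would use the structure theory of ideals and quotients of graph algebras. The essential point, taken from the proof of the earlier theorem in the excerpt (following the arguments in \cite{mt:ideals} and \cite{bhrs:idealstrucgraph}), is that for a graph $C^*$-algebra satisfying Condition (K), every \emph{ideal} is Morita equivalent to a graph algebra $C^*(F)$ with $F^0\subseteq E^0$, and every \emph{quotient} is again a graph algebra (by the gauge-invariant ideal structure and \cite[Corollary~3.5]{bhrs:idealstrucgraph}). A subquotient $\IdealJ/\IdealI$ is a quotient of the ideal $\IdealJ$; combining these two facts, $\IdealJ/\IdealI$ is stably isomorphic to a graph algebra. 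Since two separable $C^*$-algebras are Morita equivalent if and only if they are stably isomorphic (by the Brown--Green--Rieffel theorem), we obtain $(\IdealJ/\IdealI)\otimes\K\cong C^*(F)\otimes\K$ for some graph $F$, which is exactly (ii).

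\textbf{Part (iii).} Finally, statement (iii) is immediate by combining (i) and (ii): by (ii) the stabilized subquotient is of the form $C^*(F)\otimes\K$, and since any graph algebra stabilized has the corona factorization property (the same result used in (i), applied now to $F$ rather than $E$), we conclude that $(\IdealJ/\IdealI)\otimes\K$ has the corona factorization property.

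\textbf{Main obstacle.} The routine content is the bookkeeping of Morita equivalence versus stable isomorphism and making sure Condition (K) is inherited appropriately so that the structural results apply to the relevant subquotients. The genuine mathematical weight rests entirely on the external input that graph $C^*$-algebras have the corona factorization property; I expect the hard part to be nothing in this lemma itself but rather to locate and correctly cite that result, since everything else is a clean application of the ideal-structure theory for graph algebras. Because the paper states that we may use these facts ``without further mention,'' the proof reduces to assembling known theorems rather than proving anything new.
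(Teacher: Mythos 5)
Your proposal is correct and coincides with the paper's treatment: the paper gives no argument of its own for this lemma, stating only that it is observed in \cite{segrer:ccfis} (with the remark that $F$ may be taken as a subgraph of the Drinen--Tomforde desingularization), and the route you describe --- Condition (K) from finitely many ideals via \cite{jj:realrankzero}, ideals Morita equivalent to graph algebras as in \cite{mt:ideals} and quotients handled by \cite[Corollary~3.5]{bhrs:idealstrucgraph}, Brown--Green--Rieffel to convert Morita equivalence into stable isomorphism, and the outsourced corona factorization input --- is exactly the machinery this paper assembles in its preliminary theorem and imports from \cite{segrer:ccfis}. The only nitpick is that your claim that $C^*(E)$ is stably isomorphic to a unital \emph{graph} $C^*$-algebra slightly overstates what the paper records (stable isomorphism to a unital $C^*$-algebra via a norm-full projection and \cite{lgb:sihsc}), but this detour is inessential since the corona factorization property is invariant under stable isomorphism.
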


The graph $F$ above can be chosen as a subgraph of the Drinen-Tomforde desingularization of $E$ (\cite{ddmt:cag}).

\begin{definition}
For each \Ca \A, we let $\multialg{\A}$ and $\corona{\A}$ denote the multiplier algebra and the corona algebra of \A, respectively. 

For each extension 
$$\mathfrak{e} \ : \ 0\to\B\to\mathfrak{E}\to\A\to 0,$$
we let $\ftn{\busby_\mathfrak{e}}{\A}{\corona{\B}}$ denote the Busby map of the extension. 

Moreover, for each surjective (or, more generally, proper) \shom $\ftn{\varphi}{\A}{\B}$, we let $\ftn{\widetilde{\varphi}}{\multialg{\A}}{\multialg{\B}}$ and $\ftn{\overline{\varphi}}{\corona{\A}}{\corona{\B}}$ denote the unique extension to the multiplier algebras and the induced \shom between the corona algebras, respectively (cf.~\S 2.1 of \cite{elp:mecpfbi}). 
\end{definition}

\begin{lemma} \label{l:dirsum}
Let $(\B_i)_{i\in I}$ be a family of \Cas (small enough for direct sums and products to exist). 
Let $\ftn{\pi_j}{\bigoplus_{i\in I}\B_i}{\B_j}$ denote the canonical projection, for each $j\in I$. 
Then there is a canonical isomorphism $\ftn{\prod_{i\in I}\widetilde{\pi_i}}{ \multialg{ \bigoplus_{ i \in I} \B_{i} } }{ \prod_{ i\in I} \multialg{ \B_{i} } }$ which has the unique extension $\ftn{\widetilde{\pi_j}}{\multialg{\bigoplus_{i\in I}\B_i}}{\multialg{\B_j}}$ of $\pi_j$ as the $j$'th coordinate map. 

Consequently, if $I$ is finite, there is an induced isomorphism $\ftn{ \prod_{i\in I} \overline{\pi_i} } { \corona{ \bigoplus_{ i\in I} \B_{i} } }{  \prod_{ i\in I} \corona{ \B_{i} } }$, and it induces homomorphisms $\ftn{\overline{\pi_j}}{\corona{ \bigoplus_{ i\in I} \B_{i} }}{\corona{\B_j}}$ as the $j$'th coordinate map. In this case, the direct product coincides with the direct sum. 
\end{lemma}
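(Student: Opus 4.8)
The plan is to construct the two maps directly from the universal properties of the multiplier algebra and then verify they are mutually inverse, deducing the corona statement as a formal quotient. First I would recall that the multiplier algebra $\multialg{\B}$ is characterised as the maximal essential (unitisation) extension of $\B$; more usefully here, $\multialg{\B}$ represents the functor of double centralisers, and any \shom $\ftn{\varphi}{\A}{\B}$ that is proper (sends an approximate unit of \A to one of \B, which surjections and the coordinate inclusions do) extends uniquely to $\ftn{\widetilde{\varphi}}{\multialg{\A}}{\multialg{\B}}$. Applying this to the canonical projections $\ftn{\pi_j}{\bigoplus_{i}\B_i}{\B_j}$ (which are proper, even surjective) yields the coordinate maps $\widetilde{\pi_j}$, and assembling them gives the candidate map $\Phi=\prod_{i}\widetilde{\pi_i}$ into $\prod_i\multialg{\B_i}$.

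The heart of the argument is producing the inverse. Given $(T_i)_{i\in I}\in\prod_i\multialg{\B_i}$, I would define a double centraliser $(L,R)$ on $\bigoplus_i\B_i$ by acting coordinatewise: for $(b_i)_i\in\bigoplus_i\B_i$ set $L((b_i)_i)=(L_{T_i}(b_i))_i$ and $R((b_i)_i)=(R_{T_i}(b_i))_i$, where $(L_{T_i},R_{T_i})$ is the double centraliser corresponding to $T_i$. The key points to check are that these land back in the direct sum — i.e.\ that $(L_{T_i}(b_i))_i$ has the ``vanishing at infinity'' behaviour defining $\bigoplus$ — and that the norms are uniformly bounded so that $(L,R)$ is a genuine multiplier. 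Boundedness follows because each $T_i$ acts as a bounded operator with $\|L_{T_i}\|=\|T_i\|$, and membership in the direct sum follows from the fact that $\|L_{T_i}(b_i)\|\leq\|T_i\|\,\|b_i\|\to 0$ along the index net exactly when $\|b_i\|\to 0$. One then checks $\Psi((T_i)_i)=(L,R)$ is a \shom and that $\Phi$ and $\Psi$ invert each other by evaluating on $\bigoplus_i\B_i\subseteq\multialg{\bigoplus_i\B_i}$ and using that multipliers are determined by their action on the (essential) ideal. That $\widetilde{\pi_j}$ is the $j$th coordinate of $\Phi$ is immediate from the construction.

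For the corona statement, once $I$ is finite I would use that $\corona{\B}=\multialg{\B}/\B$ together with the fact that $\bigoplus_{i}\B_i=\prod_i\B_i$ when $I$ is finite. The isomorphism $\Phi$ carries the ideal $\bigoplus_i\B_i$ onto $\prod_i\B_i\subseteq\prod_i\multialg{\B_i}$, so it descends to an isomorphism of the quotients; since taking the quotient of $\prod_i\multialg{\B_i}$ by $\prod_i\B_i$ commutes with the finite product (a finite product of quotients is the quotient of the product by the product of the ideals), this quotient is $\prod_i\corona{\B_i}$. Functoriality of the construction then identifies the induced coordinate map with $\overline{\pi_j}$.

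The main obstacle I expect is the verification that $\Psi$ is well defined, specifically confirming that coordinatewise application of the $T_i$ preserves the direct sum (rather than merely the direct product) and that the resulting pair is a bounded double centraliser with the correct adjoint relation $R(a)b=aL(b)$ across coordinates; the finiteness of $I$ in the second half trivialises the corresponding issue there, but in the infinite multiplier case this is precisely where the distinction between $\bigoplus$ and $\prod$ must be handled with the uniform bound $\sup_i\|T_i\|<\infty$ coming from membership in the $C^*$-product $\prod_i\multialg{\B_i}$.
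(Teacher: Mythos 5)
Your proposal is correct and follows essentially the same route as the paper: the paper likewise realises $\multialg{\bigoplus_{i\in I}\B_i}$ via double centralisers, restricts them coordinatewise to get the map into $\prod_{i\in I}\multialg{\B_i}$ with the $\widetilde{\pi_j}$ as coordinate maps, disposes of surjectivity ``by constructing the preimage'' --- which is exactly your $\Psi$, your uniform-bound check that the coordinatewise action lands in the direct sum being the detail the paper leaves implicit --- and obtains the corona statement from the canonical isomorphism of the (finite) product of the short exact sequences $0\to\B_j\to\multialg{\B_j}\to\corona{\B_j}\to 0$ with the sequence for the sum. One harmless slip: the coordinate inclusions $\B_j\hookrightarrow\bigoplus_{i\in I}\B_i$ are \emph{not} proper when $|I|>1$ (an approximate unit of $\B_j$ is not one of the sum), but your argument never uses extensions along inclusions, only along the surjective projections $\pi_j$, so nothing breaks.
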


\begin{proof}
Here we view the multiplier algebras as the algebras of double centralisers (cf.~pp.~39 and~81-82 in \cite{gjm:cot}). 
Let $(\rho_1,\rho_2)$ be a double centralizer on $\bigoplus_{i\in I}\B_i$ (i.e., an arbitrary element of $\multialg{\bigoplus_{i\in I}\B_i}$). 
Using an approximate unit, it is easy to see that $\rho_1$ and $\rho_2$ restricted to $\B_j$ map into $\B_j$ itself. 
In this way we get a canonical \shom from $\multialg{\bigoplus_{i\in I}\B_i}$ to $\multialg{\B_j}$. 
By the universal property of the direct product, we get a \shom $\varphi$ from $\multialg{\bigoplus_{i\in I}\B_i}$ to $\prod_{i\in I}\multialg{\B_i}$, where the $j$'th coordinate map clearly is an extension of $\pi_j$ to the multiplier algebras, and hence it is \emph{the} extension $\widetilde{\pi_j}$ of $\pi_j$.  
Clearly, $\varphi$ is injective. It is also easy to show that $\varphi$ is surjective by constructing the preimage. 

Therefore, if $I$ is finite, the direct product of the short exact sequences 
$$\xymatrix{0\ar[r] & \B_j \ar[r] & \multialg{\B_j} \ar[r] & \corona{\B_j}\ar[r] & 0 }$$
is canonically isomorphic to 
$$\xymatrix{0\ar[r] & \bigoplus_{i\in I}\B_i \ar[r] & \multialg{\bigoplus_{i\in I}\B_i} \ar[r] & \corona{\bigoplus_{i\in I}\B_i}\ar[r] & 0 }$$
\end{proof}


\subsection{Primitive ideal space with $n$ maximal elements}\label{fanup}

\begin{assumption}
For this subsection, let $n>1$ be a fixed integer, and let $X_i=\X_{l_i}$ for $i=1,2,\ldots,n$, where $l_1,l_2,\ldots,l_n$ are fixed positive integers. 
Let, moreover,  
$$X=\{m\}\sqcup X_1\sqcup X_2 \sqcup \cdots \sqcup X_n$$
and define a partial order on $X$ as follows.
The element $m$ is the least element of $X$, and for each $i=1,2,\ldots,n$, if $x,y\in X_i$ then $x\leq y$ in $X$ if and only if $x\leq y$ in $X_i$. 
No other relations exist between the elements of $X$.
\end{assumption}

\begin{lemma}\label{l:essential}
Let \A be a tight \Ca over $X$ and let $k\in\{1,2,\ldots,n\}$ be given.  
Consider the extensions
\begin{equation*}
\mathfrak{e} \ : \ 0 \to \A ( X \setminus \{ m \} ) \to \A \to \A ( \{ m \} ) \to 0
\end{equation*}
and 
\begin{equation*}
\mathfrak{e} \cdot \pi_{k} \ : \ 0 \to \A ( X_k ) \to \A ( X_k\cup \{ m \} ) \to \A ( \{ m \} ) \to 0, 
\end{equation*}
where $\ftn{ \pi_{k} }{ \A( X \setminus \{ m \} ) }{ \A( X_k ) }$ is the canonical quotient \shom. 

Then $\busby_{ \mathfrak{e} \cdot \pi_{k} } = \overline{\pi}_{k} \circ \busby_{ \mathfrak{e} }$, and $\overline{\pi}_{k} \circ \busby_{ \mathfrak{e} }$ is injective. 
\end{lemma}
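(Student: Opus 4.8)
The plan is to handle the two assertions separately: the identity $\busby_{\mathfrak{e}\cdot\pi_k}=\overline{\pi}_k\circ\busby_{\mathfrak{e}}$ will come from the naturality of the Busby construction, while the injectivity will be reduced to the essentiality of the ideal $\A(X_k)$ inside $\A(X_k\cup\{m\})$. First I would record the bookkeeping forced by the order on $X$. Since $m$ is the least element, $X\setminus\{m\}$ is open and decomposes as the disjoint union of the clopen pieces $X_1,\dots,X_n$; hence $\A(X\setminus\{m\})=\bigoplus_{i=1}^n\A(X_i)$ and $\pi_k$ is the coordinate projection onto the $k$-th summand, so that $\overline{\pi}_k$ is precisely the corona coordinate map furnished by \lref{l:dirsum}. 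Dually $X_k\cup\{m\}$ is closed, with open complement $\bigsqcup_{j\ne k}X_j$, and inside $X_k\cup\{m\}$ the subset $X_k$ is open while $\{m\}$ is closed; thus $\mathfrak{e}\cdot\pi_k$ is a genuine extension. Crucially, the quotient map $q\colon\A\to\A(X_k\cup\{m\})$ restricts on the ideal $\A(X\setminus\{m\})$ to $\pi_k$ and induces the identity on $\A(\{m\})$.

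This exhibits $(\pi_k,q,\id)$ as a morphism of extensions from $\mathfrak{e}$ to $\mathfrak{e}\cdot\pi_k$ that is $\pi_k$ on the ideal and the identity on the quotient. Chasing the defining diagrams of the two Busby maps through the associated multiplier-algebra extensions, and using that $\overline{\pi}_k$ is by definition the map on coronas induced by $\pi_k$, yields $\busby_{\mathfrak{e}\cdot\pi_k}=\overline{\pi}_k\circ\busby_{\mathfrak{e}}$. This is nothing but functoriality of the Busby map under the pushout of $\mathfrak{e}$ along $\pi_k$.

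For the injectivity I would invoke the standard fact that the Busby map of an extension is injective exactly when the ideal is essential, so it suffices to prove that $\A(X_k)$ is essential in $\A(X_k\cup\{m\})$. Since $\A$ is tight over $X$, the subquotient $\A(X_k\cup\{m\})$ is tight over $X_k\cup\{m\}$, whence $\Prim(\A(X_k\cup\{m\}))=X_k\cup\{m\}$ and $\A(X_k)$ has open support $X_k$. As $m$ is the least element we have $m\in\overline{\{x\}}$ for every $x\in X_k$, so $X_k$ is dense; equivalently, every nonempty open subset of $X_k\cup\{m\}$ meets $X_k$. Therefore $\A(X_k)$ is essential and $\busby_{\mathfrak{e}\cdot\pi_k}$ is injective.

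The main obstacle is the first assertion: one must verify carefully that $\mathfrak{e}\cdot\pi_k$ really is the pushout of $\mathfrak{e}$ along $\pi_k$ and that the induced corona map coincides with the $\overline{\pi}_k$ of \lref{l:dirsum}, rather than with some other identification of $\corona{\bigoplus_i\A(X_i)}$ with $\prod_i\corona{\A(X_i)}$. Once the commuting diagram of extensions is in place, both the identity and—via tightness—the injectivity are routine.
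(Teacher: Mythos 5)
Your proof is correct, and while its first half coincides with the paper's, the injectivity argument takes a genuinely different route. For the identity $\busby_{\mathfrak{e}\cdot\pi_k}=\overline{\pi}_k\circ\busby_{\mathfrak{e}}$ you argue exactly as the paper does: exhibit $(\pi_k,q,\id)$ as a morphism of extensions and invoke functoriality of the Busby invariant under pushforward along the surjection $\pi_k$; the paper cites Theorem~2.2 of \cite{elp:mecpfbi} for this (and Corollary~4.3 there to justify the notation $\mathfrak{e}\cdot\pi_k$), and your care in matching $\overline{\pi}_k$ with the coordinate map of \lref{l:dirsum} is the same bookkeeping. For injectivity, however, you argue directly: the Busby map of an extension is injective precisely when the ideal is essential, and $\A(X_k)$ is essential in $\A(X_k\cup\{m\})$ because tightness identifies $\Prim(\A(X_k\cup\{m\}))$ with $X_k\cup\{m\}$, where $m\in\overline{\{x\}}$ for every $x\in X_k$ makes $X_k$ dense (indeed any nonempty open, i.e.\ upward closed, subset containing $m$ must contain all of $X_k$). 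The paper instead argues by contradiction via simplicity of $\A(\{m\})$: a non-injective Busby map out of a simple algebra is zero, which would split the extension as $\A(X_k\cup\{m\})\cong\A(X_k)\oplus\A(\{m\})$ and hence produce proper ideals $\IdealI,\IdealJ$ of $\A$ with $\IdealI+\IdealJ=\A$ and $\IdealI\cap\IdealJ=\A(X\setminus(X_k\cup\{m\}))$, contradicting tightness. Your version buys generality: it never uses that the quotient is simple, so it would prove injectivity of $\busby_{\mathfrak{e}\cdot\pi_k}$ for any closed subset in place of $\{m\}$ as long as the pushed-forward ideal is essential --- which is exactly the essentiality hypothesis the paper later isolates in \rref{r:class1} --- at the cost of importing the classical essentiality criterion for injectivity of the Busby map, whereas the paper's argument is self-contained modulo the zero-or-injective dichotomy for homomorphisms out of simple \Cas. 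Both arguments ultimately rest on the same topological fact, namely that $m$ lies in the closure of every point of $X_k$.
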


\begin{proof}
Note that the diagram 
\begin{equation*}
\xymatrix{
\mathfrak{e} \ : \ 0 \ar[r] & \A ( X \setminus \{ m \} ) \ar[r] \ar[d]^{ \pi_{k} } & \A \ar[r] \ar[d] & \A( \{ m \} ) \ar[r] \ar@{=}[d] & 0 \\
\mathfrak{e} \cdot \pi_{k} \ : \ 0 \ar[r] & \A ( X_k ) \ar[r] & \A( X_k\cup\{ m \} )  \ar[r] & \A( \{ m \} ) \ar[r] & 0
}
\end{equation*}
is commutative.  Since $\pi_{k}$ is surjective, by Theorem~2.2 of \cite{elp:mecpfbi}, $\overline{ \pi }_{k} \circ \busby_{ \mathfrak{e} } = \busby_{ \mathfrak{e} \cdot \pi_{k} }$. Also note, that Corollary~4.3 of \cite{elp:mecpfbi} justifies the notation $\mathfrak{e}\cdot\pi_k$. Suppose $\overline{ \pi }_{k} \circ \busby_{ \mathfrak{e} }$ is not injective, then $\overline{ \pi }_{k} \circ \busby_{ \mathfrak{e} } = 0$ since $\A ( \{ m \} )$ is a simple \Ca.  Hence, $\A ( X_k\cup\{ m \} ) \cong \A ( X_k ) \oplus \A ( \{ m \} )$.  
Since $\A(X_k\cup\{m\})\cong \A / \A(X\setminus(X_k\cup \{m\}))$, then there exist proper ideals \IdealI and \IdealJ of \A such that $\IdealI+\IdealJ=\A$ and $\IdealI\cap\IdealJ=\A(X\setminus(X_k\cup\{m\}))$. But this contradicts the fact that \A is a tight \Ca over $X$. Hence, $\overline{ \pi }_{k} \circ \busby_{ \mathfrak{e} }$ is injective. 
\end{proof}

\begin{lemma}\label{l:full}
Let \A be a tight \Ca over $X$.  Then 
\begin{equation*}
\mathfrak{e} \ : \ 0 \to \A ( X \setminus \{ m \} ) \to \A \to \A ( \{ m \} ) \to 0
\end{equation*}
is full if and only if $\mathfrak{e} \cdot \pi_{k}$ is full for all $k=1,2,\ldots,n$.  
\end{lemma}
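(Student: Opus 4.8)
The plan is to reduce everything to the \emph{disjoint-union} structure of $X\setminus\{m\}$. Since there are no order relations between distinct $X_i$ and $X_j$, each $X_k$ is upward closed in $X$ — hence open — and within the open set $X\setminus\{m\}$ its complement is the union of the remaining open pieces, so each $X_k$ is clopen in $X\setminus\{m\}$ and these sets partition it. First I would record the resulting canonical decomposition for a tight \Ca over $X$,
\[
\A(X\setminus\{m\}) \cong \bigoplus_{k=1}^{n}\A(X_k),
\]
with each $\A(X_k)$ arising as a direct summand via the clopen partition.

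Next I would feed this into \lref{l:dirsum}. As the index set is finite, the corona algebra splits as
\[
\corona{\A(X\setminus\{m\})} \cong \bigoplus_{k=1}^{n}\corona{\A(X_k)},
\]
and the $k$-th coordinate map of this isomorphism is precisely $\overline{\pi}_k$. Composing the Busby map with the isomorphism therefore describes $\busby_\mathfrak{e}$ coordinatewise, and by \lref{l:essential} its $k$-th coordinate is exactly $\overline{\pi}_k\circ\busby_\mathfrak{e}=\busby_{\mathfrak{e}\cdot\pi_k}$. Thus, under the identification,
\[
\busby_\mathfrak{e}(a)=\bigl(\busby_{\mathfrak{e}\cdot\pi_1}(a),\ldots,\busby_{\mathfrak{e}\cdot\pi_n}(a)\bigr)
\]
for every $a\in\A(\{m\})$.

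The last ingredient is a general fact about fullness in a finite direct sum: an element $c=(c_1,\ldots,c_n)$ of $\bigoplus_k C_k$ is full if and only if each $c_k$ is full in $C_k$. This holds because the closed two-sided ideal generated by $c$ is computed coordinatewise, $\overline{(\bigoplus_k C_k)\,c\,(\bigoplus_k C_k)}=\bigoplus_k\overline{C_k c_k C_k}$, so it exhausts the whole algebra exactly when every summand does. Recalling that $\mathfrak{e}$ is full precisely when $\busby_\mathfrak{e}(a)$ is full for all nonzero $a$ (and similarly for each $\mathfrak{e}\cdot\pi_k$), the equivalence is then just a harmless reordering of universal quantifiers: $\busby_\mathfrak{e}(a)$ is full for all $a\neq0$ if and only if $\busby_{\mathfrak{e}\cdot\pi_k}(a)$ is full for all $a\neq0$ and all $k$, if and only if each $\mathfrak{e}\cdot\pi_k$ is full.

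I expect the only genuinely delicate point to be the very first step, namely verifying the canonical direct-sum decomposition of $\A(X\setminus\{m\})$ for an \emph{arbitrary} tight \Ca over $X$ rather than a concrete graph algebra; once this is in hand, everything downstream is bookkeeping powered by \lref{l:dirsum} and \lref{l:essential}, while the direct-sum fullness fact and the quantifier manipulation are routine.
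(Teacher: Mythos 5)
Your proposal is correct and takes essentially the same route as the paper: the same clopen decomposition $\A(X\setminus\{m\})\cong\bigoplus_{k=1}^{n}\A(X_k)$, \lref{l:dirsum} to split the corona with the $\overline{\pi}_k$ as coordinate maps, and \lref{l:essential} to identify $\overline{\pi}_k\circ\busby_{\mathfrak{e}}=\busby_{\mathfrak{e}\cdot\pi_k}$. The only cosmetic difference is that you obtain both implications at once from the coordinatewise fullness criterion, whereas the paper proves the forward direction separately via surjectivity of $\overline{\pi}_k$ and leaves the ``cutting down to each coordinate'' step implicit --- a step you correctly spell out.
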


\begin{proof}
By Lemma \ref{l:essential}, $\busby_{ \mathfrak{e}  \cdot \pi_{k} } = \overline{ \pi }_{k} \circ \busby_{ \mathfrak{e} }$.  Thus, if $\mathfrak{e}$ is a full extension, then $\mathfrak{e} \cdot \pi_{k}$ is a full extension since $\overline{ \pi }_{k}$ is surjective.  Suppose $\mathfrak{e} \cdot \pi_{k}$ is a full extension for all $k=1,2,\ldots,n$.  Note that $\A(X\setminus \{m\})$ is $\bigoplus_{j=1}^{n}\A(X_j)$ and thus from Lemma~\ref{l:dirsum} it follows that the $j$'th coordinate map of $\left(\bigoplus_{ i = 1}^{n} \overline{ \pi }_{i} \right) \circ \busby_{ \mathfrak{e} }$ is exactly $\overline{\pi_j}\circ\busby_{\mathfrak{e}}=\busby_{ \mathfrak{e} \cdot \pi_{j} }$ (according to Lemma~\ref{l:essential}).  Since $\bigoplus_{ i = 1}^{n} \overline{ \pi }_{i}$ is an isomorphism and since $\mathfrak{e} \cdot \pi_{k}$ is a full extension for all $k=1,2,\ldots,n$, we have that $\mathfrak{e}$ is a full extension. That this direct sum of full extensions is again full can easily be shown by first cutting down to each coordinate.  
\end{proof}

The signatures \myref{3}{6}{1}, \myref{3}{6}{5}, \myref{4}{39}{1}, \myref{4}{39}{3}, \myref{4}{39}{4}, \myref{4}{39}{5}, \myref{4}{39}{7}, \myref{4}{38}{1}, \myref{4}{38}{3}, \myref{4}{38}{7} are covered by the following theorem. 

\begin{theorem}\label{t:class1}
Let \A and \B be graph \Cas that are tight \Cas over $X$.  
Assume that there exists an isomorphism $\ftn{ \alpha }{ \FKplus{X}{\A} }{ \FKplus{X}{\B} }$. 
Assume, moreover, that $\A(\{m\} )$ is an \AFa and that $\Xaf$ is 
hereditary.  Then $\A \otimes \K \cong \B \otimes \K$. 
\end{theorem}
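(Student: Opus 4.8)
The plan is to classify $\A$ and $\B$ by combining the extension-theoretic machinery of the previous lemmas with the separated classification result Proposition~\ref{prop:classgraph1}, realizing the whole algebra as an extension of the (purely infinite) ``top'' part by the AF-ideal $\A(\{m\})$ and its complement. The space $X$ here has a least element $m$, and the hypothesis that $\Xaf$ is hereditary together with $\A(\{m\})$ being AF strongly suggests that $\Xaf=\{m\}$ and $\Xpi=X\setminus\{m\}=X_1\sqcup\cdots\sqcup X_n$; thus $\A(X\setminus\{m\})$ is a direct sum $\bigoplus_{j=1}^n\A(X_j)$ of tight $\mathcal O_\infty$-absorbing algebras over the totally ordered spaces $X_j$, while $\A(\{m\})$ is an AF-algebra sitting as the quotient. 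First I would record these identifications and note that, by the facts on graph algebras, each subquotient is stably isomorphic to a unital graph algebra with finitely generated $K$-theory, and $\A(\{m\})$ is simple AF.

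\emph{The reduction to a full-extension problem.} The central object is the extension
\begin{equation*}
\mathfrak{e}\ :\ 0\to\A(X\setminus\{m\})\to\A\to\A(\{m\})\to 0,
\end{equation*}
and the strategy is to show that an isomorphism of filtered ordered $K$-theory $\alpha$ forces $\A$ and $\B$ to have isomorphic (stabilized) ideals $\A(X\setminus\{m\})$, isomorphic quotients $\A(\{m\})$, and \emph{equivalent Busby invariants}, so that a standard extension-lifting argument yields $\A\otimes\K\cong\B\otimes\K$. Here Lemma~\ref{l:full} is the key tool: fullness of $\mathfrak e$ is equivalent to fullness of each $\mathfrak e\cdot\pi_k$, the extension of the single purely-infinite block $\A(X_k)$ by the AF quotient $\A(\{m\})$. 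Each $\mathfrak e\cdot\pi_k$ is exactly of the type handled by Proposition~\ref{prop:classgraph1} (an AF quotient over a point, with a tight stable $\mathcal O_\infty$-absorbing part over the tail $X_k$, and essentiality supplied by the injectivity half of Lemma~\ref{l:essential}); so I would classify these ``column'' extensions one at a time using that proposition, restricting $\alpha$ to the relevant subquotients.

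\emph{Assembling the pieces.} After classifying each column, the remaining work is to glue: because $\A(X\setminus\{m\})\cong\bigoplus_j\A(X_j)$ and, by Lemma~\ref{l:dirsum}, $\corona{\bigoplus_j\A(X_j)}\cong\prod_j\corona{\A(X_j)}$, the Busby map $\busby_{\mathfrak e}$ decomposes coordinatewise as $(\busby_{\mathfrak e\cdot\pi_k})_k$ up to the isomorphism $\bigoplus_i\overline{\pi}_i$. Hence a family of equivalences of the individual Busby maps (produced by the column classifications, after checking they can be implemented by a \emph{common} isomorphism of the quotient $\A(\{m\})$, which is where the simplicity and Elliott-rigidity of AF-algebras is used) assembles into a single equivalence of $\busby_{\mathfrak e}$ and $\busby_{\mathfrak f}$ (the Busby map for $\B$). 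Fullness of both extensions, guaranteed by Lemma~\ref{l:full} once the columns are full, lets one invoke the absorption/classification theory for full extensions with $\mathcal O_\infty$-absorbing, corona-factorization ideals (Lemma~\ref{cfpbasics} supplies the corona factorization property) to conclude the two extensions are strongly unitarily equivalent, and therefore $\A\otimes\K\cong\B\otimes\K$.

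I expect the main obstacle to be the \emph{coherence across columns}: Proposition~\ref{prop:classgraph1} classifies each $\mathfrak e\cdot\pi_k$ separately and produces, for each $k$, an isomorphism of the ideal $\A(X_k)\otimes\K$ and of the quotient $\A(\{m\})\otimes\K$ intertwining the Busby maps, but these quotient isomorphisms may a priori differ from column to column, whereas gluing via Lemma~\ref{l:dirsum} requires one and the same automorphism of $\A(\{m\})\otimes\K$ to work simultaneously for all $k$. Overcoming this requires exploiting that $\alpha$ is a single isomorphism of the \emph{global} invariant $\FKplus{X}{-}$ — so its restriction to $K_*(\A(\{m\}))$ is fixed independently of $k$ — and that the AF-quotient is simple, so that by Elliott's theorem (Theorem~\ref{AF}) any two lifts of the same $K_0$-isomorphism of $\A(\{m\})$ differ by an inner/approximately inner automorphism that can be absorbed. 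Verifying that the column-wise Busby equivalences can be chosen to respect this common quotient identification — i.e.\ that there is no obstruction living in the relative $KK$ or $\Ext$ groups when passing from the product corona back to the direct sum — is the delicate point, and the hereditarity of $\Xaf$ is precisely what rules out unwanted interaction (extra ideals mixing $m$ with the tails) that would otherwise break the coordinatewise decomposition.
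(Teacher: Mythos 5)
The decisive flaw is your opening reduction: hereditarity of $\Xaf$ does \emph{not} force $\Xaf=\{m\}$. In this paper ``hereditary'' means downward closed in the specialisation order, and since $m$ is the \emph{least} element of $X$, hereditarity of $\Xaf$ only says that the AF points form a down-set: each linear column $X_i$ splits into an AF initial segment and a purely infinite terminal segment $[x,\infty)$ (with $x\in \Xpi^{\min}$), and entire columns may be AF. Indeed, among the signatures the theorem is designed for, \myref{3}{6}{1}, \myref{4}{39}{3}, \myref{4}{39}{5} and \myref{4}{38}{1} all have $\Xaf\supsetneq\{m\}$ (in \myref{4}{39}{5} one column is genuinely mixed, AF below PI), and the paper's proof treats $\Xaf\setminus\{m\}\neq\emptyset$ as its \emph{main} case, disposing of $\Xaf=\{m\}$ in one closing sentence. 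Because you assume every column is purely infinite, your column extensions $\mathfrak{e}\cdot\pi_k$ have the simple AF quotient $\A(\{m\})$ and PI ideal $\A(X_k)$; in the actual setting the relevant column extension is $0\to\A([x,\infty))\to\A(\widetilde{X}_{i_x})\to\A([m,x))\to 0$, whose AF quotient $\A([m,x))$ is generally non-simple and whose ideal is only the upper part of the column, so Proposition~\ref{prop:classgraph1} does not apply and your framework never sees these cases, nor the all-AF columns (which the paper handles via the restriction $\beta_{X_j}$ of a global Elliott isomorphism). Your closing remark that hereditarity ``rules out unwanted interaction'' is likewise off target: its actual role is to make $\Xpi$ an up-set, so that inside each column the PI part is an ideal and the AF part a quotient.

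A secondary problem persists even in the special case $\Xaf=\{m\}$: Proposition~\ref{prop:classgraph1} is a pure classification statement, so quoting it columnwise yields only the \emph{existence} of stable isomorphisms, not isomorphisms of extensions intertwining the Busby maps over one common automorphism of $\A(\{m\})$ --- the very coherence issue you flag. The paper does not route through Proposition~\ref{prop:classgraph1} at all: it first lifts $\alpha_{\Xaf}$ to a single $\Xaf$-equivariant isomorphism $\ftn{\beta}{\A(\Xaf)}{\B(\Xaf)}$ by Elliott \cite{gae:cilssfa}; then, for each $x\in\Xpi^{\min}$, the UCT over the linear space $\widetilde{X}_{i_x}$ (Theorem~4.14 of \cite{rmrn:ctsfk}), Kirchberg \cite{ek:nkmkna} and Theorem~3.3 of \cite{segrer:ccfis} produce $\varphi^x$ with $[\busby_{\mathfrak{e}^\B_x}\circ\beta_{[m,x)}]=[\overline{\varphi^x}\circ\busby_{\mathfrak{e}^\A_x}]$ in $\kk^1$, where coherence across columns is automatic because the quotient identification is always a restriction of the one $\beta$ and the $\kk$-lift of $\beta_{[m,x)}$ is unique --- this is the precise form of the ``Elliott rigidity'' you gesture at. Fullness of the column extensions comes from Corollary~5.3 of \cite{segrer:ccfis} (Lemma~\ref{l:full} is reserved for Theorem~\ref{t:class1-PI}); absorption \cite{gaedk:avbfat}, \cite{dkpwn:cfpaue} plus the corona factorization property upgrades the $\kk$-equality to honest unitary equivalence columnwise; Theorem~2.2 of \cite{elp:mecpfbi} then gives exact isomorphisms $\eta^x$ of the column extensions, and only at the end are Lemmas~\ref{l:dirsum} and~\ref{l:essential} used to assemble $\theta=\bigoplus_j\theta_j$ with $\overline{\theta}\circ\busby_{\mathfrak{e}^\A}=\busby_{\mathfrak{e}^\B}\circ\beta_{\{m\}}$ --- note the gluing is of exact equalities, not a second global absorption step as you propose. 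Your assembly sketch matches this final step in spirit, but without repairing the misreading of hereditarity the proposal proves only the degenerate instance $\Xaf=\{m\}$ of the theorem.
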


\begin{proof}
We may assume that \A and \B are stable \Cas. 
Note that for each $x\in X$, $\A ( \{ x \} )$ is an \AFa if and only if since $\B ( \{ x \} )$ is an \AFa, and $\A ( \{ x \} )$ is \Oia if and only if $\B ( \{ x \} )$ is \Oia (since there exists a positive isomorphism from $K_{0} ( \A ( \{x\} ))$ to $K_{0} ( \B ( \{x\}))$). Specifically, $\B(\{ m\})$ is an \AFa. 
First we assume that $\Xpi\neq\emptyset$ and $\Xaf\setminus\{m\}\neq\emptyset$. 

Note that $\A(\Xaf)$ and $\B(\Xaf)$ are \AFas. 
Since $\ftn{ \alpha_{ \Xaf }}{ K_{0} ( \A ( \Xaf) ) } { K_{0} ( \B ( \Xaf) ) }$ is a positive isomorphism, there exists an isomorphism $\ftn{ \beta }{ \A ( \Xaf) }{ \B ( \Xaf ) }$ such that $K_{0} ( \beta ) =  \alpha_{ \Xaf }$  (by Elliott's classification result \cite{gae:cilssfa}).  Since $\A ( \Xaf )$ and $\B ( \Xaf )$ are \AFas and $\beta$ is an $\Xaf$-equivariant isomorphism, we have that $K_{0} ( \beta_Y ) = \alpha_{ Y }$ for all $Y \in \mathbb{LC} ( X )$ such that $Y \subseteq \Xaf$.  In particular, $K_{0} ( \beta_{ \{ x \} } ) = \alpha_{ \{ x \} }$ for all $x \in \Xaf$.

Let $\Xpi^{\min}$ be the set of minimal elements of $\Xpi$, and for each $a,b\in X$ let
\begin{align*}
[a,\infty)&=\setof{x\in X}{a\leq x},\\
[a,b)&=\setof{x\in X}{a\leq x< b}.
\end{align*}

Let $x\in \Xpi^{\min}$ be given. Let $i_x\in\{1,2,\ldots,n\}$ be the unique number such that $x\in X_{i_x}$. 
Note that $X_{i_x}\sqcup\{m\} = [m,x)\cup[x,\infty)$, which we will denote by $\widetilde{X}_{i_x}$. Let, moreover, 
\begin{equation*}
\mathfrak{e}^\A_x \ : \ 0 \to \A ( [x,\infty) ) \to \A( \widetilde{X}_{i_x} ) \to \A ( [m,x) ) \to 0.
\end{equation*}
and
\begin{equation*}
\mathfrak{e}^\B_x \ : \ 0 \to \B ( [x,\infty) ) \to \B( \widetilde{X}_{i_x} ) \to \B ( [m,x) ) \to 0.
\end{equation*}
Since $\ftn{ \alpha }{ \FKplus{X}{\A} }{ \FKplus{X}{\B} }$ is an isomorphism, we also have an isomorphism $\ftn{ \alpha_{\widetilde{X}_{i_x}} }{ \FKplus{\widetilde{X}_{i_x}}{\A(\widetilde{X}_{i_x})} }{ \FKplus{\widetilde{X}_{i_x}}{\B(\widetilde{X}_{i_x})} }$. So by Theorem~4.14 of \cite{rmrn:ctsfk}, Kirchberg \cite{ek:nkmkna}, and Theorem~3.3 of \cite{segrer:ccfis}, there exists an isomorphism $\ftn{\varphi^x}{\A ([x,\infty))}{\B ([x,\infty))}$ 
such that $K_*(\varphi^x)=\alpha_{[x,\infty)}$, 
and
\begin{equation*}
\left[\busby_{\mathfrak{e}^\B_x}\circ\beta_{[m,x)}\right]=\left[\overline{\varphi^x}\circ\busby_{\mathfrak{e}^\A_x}\right]
\end{equation*}
in $\kk^1( \A( [m,x) ),\B( [x,\infty) ) )$, since $\kk(\beta_{[m,x)})$ is the unique lifting of $\alpha_{[m,x)}$. 

As in the proof of Proposition~6.3 of \cite{segrer:ccfis}, Corollary~5.3 of \cite{segrer:ccfis} implies that $\busby_{\mathfrak{e}^\A_x}$ and $\busby_{\mathfrak{e}^\B_x}$ are full extensions, and thus also the extensions with Busby maps $\busby_{\mathfrak{e}^\B_x}\circ\beta_{[m,x)}$ and $\overline{\varphi^x}\circ\busby_{\mathfrak{e}^\A_x}$ are full. 
Since the extensions are non-unital and $\B( [x,\infty) )$ satisfies the corona factorization property, there exists a unitary $u_x\in\multialg{\B( [x,\infty) ) }$ such that 
\begin{equation*}
\busby_{\mathfrak{e}^\B_x}\circ\beta_{[m,x)}=\Ad(\overline{u_x})\circ\overline{\varphi^x}\circ\busby_{\mathfrak{e}^\A_x} 
\end{equation*}
where $\overline{u_x}$ is the image of $u_x$ in the corona algebra (this follows from \cite{gaedk:avbfat} and \cite{dkpwn:cfpaue}). 
Hence, by Theorem 2.2 of \cite{elp:mecpfbi}, there exists an isomorphism $\ftn{\eta^x}{\A(\widetilde{X}_{i_x})}{\B(\widetilde{X}_{i_x})}$ such that $(\Ad(\overline{u_x})\circ\varphi^x,\eta^x,\beta_{[m,x)})$ is an isomorphism from $\mathfrak{e}^\A_x$ to $\mathfrak{e}^\B_x$. 
Let
\begin{equation*}
\mathfrak{e}^\A \ : \ 0 \to \A ( X \setminus \{ m \} ) \to \A \to \A ( \{ m \} ) \to 0,
\end{equation*}
and
\begin{equation*}
\mathfrak{e}^\B \ : \ 0 \to \B ( X \setminus \{ m \} ) \to \B \to \B ( \{ m \} ) \to 0.
\end{equation*}
Since $\A(\widetilde{X}_{i_x})$ and $\B(\widetilde{X}_{i_x})$ have linear ideal lattices, this induces an isomorphism 
$$\xymatrix{
\mathfrak{e}^\A\cdot\pi_{i_x}\colon & 0\ar[r] & \A(X_{i_x})\ar[d]^{\psi^x}\ar[r] & \A(\widetilde{X}_{i_x})\ar[d]\ar[r] & \A(\{m\}) \ar[d]^{\beta_{\{m\}}}\ar[r] & 0, \\ 
\mathfrak{e}^\B\cdot\pi_{i_x}\colon & 0\ar[r] & \B(X_{i_x})\ar[r] & \B(\widetilde{X}_{i_x})\ar[r] & \B(\{m\}) \ar[r] & 0.
}$$
So now by construction, 
$$\overline{\psi^x}\circ\busby_{\mathfrak{e}^\A\cdot\pi_{i_x}}=\busby_{\mathfrak{e}^\B\cdot\pi_{i_x}}\circ\beta_{\{m\}},$$
for all $x\in \Xpi^{\min}$, and
$$\overline{\beta_{X_j}}\circ\busby_{\mathfrak{e}^\A\cdot\pi_{j}}=\busby_{\mathfrak{e}^\B\cdot\pi_{j}}\circ\beta_{\{m\}},$$
for all $j=1,2,\ldots,n$ satisfying that $\A(X_j)$ is an \AFa. 
Now we define an isomorphism $\theta$ from $\A(X\setminus \{m\})$ to $\B(X\setminus \{m\})$ as the direct sum of the $\psi^x$'s and $\beta_{X_j}$'s. 
We get that (from Lemma~\ref{l:dirsum} and Lemma~\ref{l:essential}) 
\begin{gather*}
\overline{\theta}\circ\busby_{\mathfrak{e}^\A} =\overline{\theta}\circ
\left(\bigoplus_{j=1}^n\busby_{\mathfrak{e}^\A\cdot\pi_{j}}\right)
=\bigoplus_{j=1}^n\overline{\theta_j}\circ\busby_{\mathfrak{e}^\A\cdot\pi_{j}}
=\\
\bigoplus_{j=1}^n\busby_{\mathfrak{e}^\B\cdot\pi_{j}}\circ\beta_{\{m\}} =\left(\bigoplus_{j=1}^n\busby_{\mathfrak{e}^\B\cdot\pi_{j}}\right)\circ\beta_{\{m\}} =\busby_{\mathfrak{e}^\B}\circ\beta_{\{m\}} ,
\end{gather*}
where the $\theta_j$'s denote the corresponding $\psi^x$'s and $\beta_{X_j}$'s. 
Hence, by Theorem 2.2 of \cite{elp:mecpfbi}, $\A \cong \B$.

If $\Xpi=\emptyset$ the result is due to Elliott's classification result \cite{gae:cilssfa}, and if $\Xaf=\{m\}$ the theorem follows easily by making modifications to the above proof. 
\end{proof}

\begin{remark}\label{r:class1}
Let \A and \B be graph \Cas that are \Cas over $X$, so that $\A(X_i)$ and $\B(X_i)$ are tight \Cas over $X_i$, for $i=1,2,\ldots,n$. 
Assume that 
$$0\to \A(X_i)/\A(X_i\setminus\{x_i\})\to\A(X_i\cup\{m\})/\A(X_i\setminus\{x_i\})\to \A(X_i\cup\{m\})/\A(X_i)\to 0$$
is essential whenever $\A(X_i)$ is \Oia, where $x_i$ is the greatest element of $X_i$. 
Assume that there exists an isomorphism $\ftn{ \alpha }{ \FKplus{X}{\A} }{ \FKplus{X}{\B} }$. 
Assume moreover, that $\A ( \{ m \} )$ is an \AFa and that the set of $x\in X$ for which $\A(\{x\})$ is an \AFa is hereditary.  Then $\A \otimes \K \cong \B \otimes \K$. 
This follows from the proof above. 

The above extensions are essential, e.g., if $\A(\{x_i\})$ is the least ideal of $\A(\{x_i,m\})$, for all $i=1,2,\ldots,n$, and the remark applies to the cases\footnote{Here we specify how we view the algebras as algebras over $a\leftarrow b\rightarrow c$ by providing a continuous map from the primitive ideal space to $\{a,b,c\}$} 
\begin{enumerate}[(a)]
\item
\myref{4}{E}{1}, where we view the algebra \A that is tight over the space \msp{4}{E} as a \Ca over $a\leftarrow b \rightarrow c$ as indicated by the assignment $b\rightarrow a\leftarrow b\rightarrow c$.

\item
\myref{4}{1E}{1} and 
\myref{4}{1E}{3}, where we view the algebra \A that is tight over the space \msp{4}{1E} as a \Ca over $a\leftarrow b \rightarrow c$ as indicated by the assignment 
\item
\myref{4}{3E}{1}, where we view the algebra \A that is tight over the space \msp{4}{3E} as a \Ca over $a\leftarrow b \rightarrow c$ as indicated by the assignment
\end{enumerate}
\end{remark}

The following proposition follows from the results in \cite{semt:cnga}.

\begin{proposition}\label{p:full}
Let \A be a graph \Ca with exactly one nontrivial ideal \IdealI.  If \A is not an \AFa, then $0 \to \IdealI \otimes \K \to \A\otimes \K \to \A / \IdealI \otimes \K \to 0$ is a full extension.
\end{proposition}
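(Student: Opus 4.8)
The plan is to peel off the soft parts and isolate the genuine difficulty in the purely infinite ideal. Since \A has exactly one nontrivial ideal, its lattice of ideals is the chain $0\subsetneq\IdealI\subsetneq\A$, so \IdealI is the unique minimal nonzero ideal and both \IdealI and $\A/\IdealI$ are \emph{simple} graph \Cas. Being simple graph \Cas of real rank zero, each is either an \AFa or \Oia, and the hypothesis that \A is not an \AFa says that at least one of them is \Oia. I would first record that \IdealI is essential: its annihilator $\IdealI^{\perp}$ satisfies $\IdealI\cap\IdealI^{\perp}=0$, and since every nonzero ideal of \A contains \IdealI we get $\IdealI^{\perp}=0$. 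Hence $\IdealI\otimes\K$ is an essential ideal of $\A\otimes\K$ and the Busby map $\ftn{\busby}{\A/\IdealI\otimes\K}{\corona{\IdealI\otimes\K}}$ is injective.

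Next I would reduce fullness to a single element. Because $\A/\IdealI\otimes\K$ is simple and \busby is injective, fullness is equivalent to the image $\busby(\A/\IdealI\otimes\K)$ lying in no proper ideal of $\corona{\IdealI\otimes\K}$: if some $\busby(a)$ lay in a proper ideal \IdealJ, then $\busby^{-1}(\IdealJ)$ would be a nonzero ideal of the simple algebra $\A/\IdealI\otimes\K$, hence all of it, so the whole image would lie in \IdealJ; conversely, if the image lies in no proper ideal then every nonzero element is sent to a full element. In particular it suffices to exhibit a single nonzero element — conveniently a projection, which exists by real rank zero — whose image is full.

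The subcase in which \IdealI is an \AFa is then immediate. A simple AF graph \Ca is $\K$ or some $\M_n$, so $\IdealI\otimes\K\cong\K$ and $\corona{\IdealI\otimes\K}\cong\corona{\K}$ is the Calkin algebra, which is simple; an injective \shom into a simple \Ca has image meeting no proper ideal, so the essential extension is automatically full. This settles the situation where \A fails to be an \AFa because the quotient is \Oia.

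The real work, and the main obstacle, is the case in which \IdealI is \Oia. Here $\B:=\IdealI\otimes\K$ is a stable Kirchberg algebra which, by \lref{cfpbasics}, enjoys the corona factorization property, and $\corona{\B}$ is no longer simple, so the argument of the previous paragraph is unavailable. The plan is to show directly that $\busby(\A/\IdealI\otimes\K)$ escapes every proper ideal of $\corona{\B}$. The proper ideals of $\corona{\B}$ correspond to the proper ideals of $\multialg{\B}$ containing $\B$, and for a stable, $\sigma$-unital \Ca with the corona factorization property these are severely constrained; the substantive task is to verify that an essential extension by such a $\B$ cannot be absorbed into one of them, i.e.\ that the image of a suitable projection is full in $\corona{\B}$. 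This corona-ideal analysis, powered by the corona factorization property, is exactly the technical heart of the matter and is carried out for algebras with a single nontrivial ideal in \cite{semt:cnga}; it is also where the hypotheses on \A genuinely enter, so I would invoke that analysis to conclude.
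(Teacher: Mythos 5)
Your skeleton (essentiality of \IdealI, reduction of fullness to one nonzero element via simplicity of the quotient, then a case division according to which subquotient is \Oia) is the right one, and since the paper's entire proof of Proposition~\ref{p:full} is the single sentence that it ``follows from the results in \cite{semt:cnga}'', your closing citation lands exactly where the authors do. But the case you declare ``immediate'' contains a genuine hole: the claim that a simple AF graph \Ca is $\K$ or some $\mathsf{M}_{n}$ is false. Bratteli diagrams are graphs: the row-finite graph with vertices $v_{1}, v_{2}, \dots$ and exactly two edges from $v_{n}$ to $v_{n+1}$ has a simple AF graph \Ca with $K_{0} \cong \Z[1/2]$, i.e.\ a stabilized UHF algebra of type $2^{\infty}$. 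Worse, such an ideal really can occur within the scope of Proposition~\ref{p:full}, which does \emph{not} assume \A unital: adjoin a vertex $w$ carrying two loops and one edge from $w$ to $v_{1}$; the only saturated hereditary sets are then $\emptyset$, $\{v_{n}\}_{n}$, and everything, so this graph \Ca has exactly one nontrivial ideal, with ideal stably isomorphic to $\mathsf{M}_{2^{\infty}} \otimes \K$ and quotient $\cO_{2}$. Here $\corona{\IdealI \otimes \K}$ is \emph{not} the Calkin algebra and is not simple (the densely defined trace yields a proper ideal of the multiplier algebra strictly between $\IdealI \otimes \K$ and the whole algebra), so your one-line argument collapses. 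The conclusion $\IdealI \otimes \K \cong \K$ is only available when \A is unital up to stable isomorphism of the relevant graph -- that is why item (4) of the standing theorem in the preliminaries assumes $C^{*}(E)$ unital, and why the later lemma on signatures \mspk{4}{F} imposes unitality precisely in the subcases where the bottom ideal is AF. The AF-ideal case is exactly the one that \cite{semt:cnga} settles by genuinely graph-theoretic multiplier arguments, and it is the part your proposal would need but does not supply.

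You also have the difficulty inverted in the other direction: the case where \IdealI is \Oia is not ``the technical heart'' but the soft one, because the corona of a $\sigma$-unital, stable, purely infinite simple \Ca is \emph{simple}, so ``essential extension plus simple corona'' closes it by the very argument you used for $\corona{\K}$ -- a fact this paper invokes verbatim later (``the extension is full since $\IdealI \otimes \K$ is a simple, purely infinite, stable \Ca, which implies that $\corona{\IdealI \otimes \K}$ is simple''). Your proposed substitute there, a ``corona-ideal analysis powered by the corona factorization property'', is the wrong tool: in this circle of ideas the CFP is used to upgrade equality of \kk-classes of already-full extensions to unitary equivalence of Busby maps (as in the proofs of Theorems~\ref{t:class1} and \ref{t:pullback-technique}, via \cite{gaedk:avbfat} and \cite{dkpwn:cfpaue}), not to establish fullness; and the CFP does not constrain the ideal lattice of $\multialg{\B}$ in the way you suggest -- stable AF algebras have the CFP yet their multiplier algebras carry the trace ideal above. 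So, because you end by citing \cite{semt:cnga}, no false conclusion is drawn, but as a self-contained argument your proposal proves the easy half twice and leaves to the citation precisely the half it mislabels as easy.
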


Using the UCT for accordion spaces (see \cite{rmrn:ctsfk} and \cite{rbmk:uctcfts}) and for many other four-point spaces under the added assumption of real rank zero as described in \cite{sagrer:fkrrzc}, the cases \myref{3}{6}{2}, \myref{3}{6}{3}, \myref{4}{38}{8}, \myref{4}{38}{9}, \myref{4}{38}{11}, can be classified using the following theorem. 

\begin{theorem}\label{t:class1-PI}
Let $\A$ and $\B$ be graph \Cas that are tight \Cas over $X$, with $X_i$ being a singleton, for each $i=1,2,\ldots,n$.
Suppose there exists an isomorphism $\ftn{ \alpha }{ \FKplus{X}{\A} }{ \FKplus{X}{\B} }$ which lifts to an invertible element in $\kk ( X ; \A , \B )$.  Then $\A \otimes \K \cong \B \otimes \K$.  
\end{theorem}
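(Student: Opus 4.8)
The plan is to reduce the statement to Kirchberg's classification of nuclear, purely infinite, stable \Cas over a finite $T_0$-space. Since $\A$ and $\B$ are graph \Cas with finitely many ideals, they have real rank zero and are nuclear and separable; moreover, being tight over $X$ with each $X_i$ a singleton means the space $X$ is a \emph{fan space} whose interior points $X_1,\ldots,X_n$ are maximal and whose unique minimal point is $m$. First I would observe that, because the hypothesis supplies an \emph{explicit} invertible class in $\kk(X;\A,\B)$ lifting $\alpha$ --- rather than merely an abstract filtered $K$-theory isomorphism --- the delicate step of constructing such a lift (the content of a UCT, and precisely what forces the real-rank-zero hypotheses in the cited cases) has been handed to us for free.

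The key steps are then as follows. I would first pass to the stabilizations, assuming $\A$ and $\B$ are stable; this is harmless since stable isomorphism is what we are after and $\kk(X;-,-)$ is stable-invariant. Second, I would check that $\A$ and $\B$ are equivariantly purely infinite over $X$: here the only ideals are supported on $\Xaf$ and $\Xpi$, but the hypothesis of \tref{t:class1-PI} does \emph{not} assume the algebras are \Oia, so I must be careful --- in fact the correct reading is that this theorem is meant for the situation where the relevant subquotients are purely infinite. Granting that the simple subquotients are Kirchberg algebras in the bootstrap class (which holds for graph \Cas over these accordion/fan spaces, whose simple subquotients are either $\mathcal{O}_\infty$-absorbing or, in the AF directions, excluded by the purely infinite setting), the third and central step is to invoke Kirchberg's theorem \cite{ek:nkmkna}: an invertible $\kk(X;\A,\B)$-class between stable, nuclear, strongly purely infinite separable \Cas over $X$ in the bootstrap class of Meyer and Nest lifts to an $X$-equivariant isomorphism $\A\otimes\K\cong\B\otimes\K$.

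I would therefore structure the proof as: (1) reduce to the stable case; (2) verify that $\A,\B$ satisfy the standing hypotheses of Kirchberg's $X$-equivariant classification --- separable, nuclear, strongly purely infinite, and in the Meyer--Nest bootstrap class, using the graph-algebra facts recorded in the opening theorem and the equivalence of the three notions of pure infiniteness over finite primitive ideal spaces; and (3) apply Kirchberg's isomorphism theorem directly to the given invertible $\kk(X;\A,\B)$-element to obtain the $X$-equivariant $*$-isomorphism, hence $\A\otimes\K\cong\B\otimes\K$.

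The hard part will be the bookkeeping in step (2): confirming that for the specific fan spaces in question (with singleton $X_i$'s) the tightness and purely-infinite hypotheses genuinely place $\A$ and $\B$ in the exact class to which Kirchberg's theorem applies, and that the supplied $\kk(X;\A,\B)$-invertible really is the object Kirchberg's theorem consumes --- i.e., that the filtered $K$-theory isomorphism $\alpha$ and its $\kk_X$-lift are compatible with the equivariant structure over $X$ rather than merely over the subquotients. Once the lift is in hand, no corona-factorization or Busby-map manipulation of the kind used in \tref{t:class1} is needed, since we bypass the extension-theoretic construction entirely and quote Kirchberg's theorem as a black box.
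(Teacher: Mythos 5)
Your reduction to Kirchberg's $\kk(X)$-classification fails for precisely the cases this theorem exists to handle. The statement places no temperature restriction on the points of $X$, and the signatures the paper resolves with it --- \myref{3}{6}{2}, \myref{3}{6}{3}, \myref{4}{38}{8}, \myref{4}{38}{9}, \myref{4}{38}{11} --- are all \emph{mixed}: the simple quotient $\A(\{m\})$ is \Oia while some or all of the simple ideals $\A(X_i)$ are \AFas. An algebra with a nonzero AF ideal is not purely infinite, since pure infiniteness passes to ideals (Proposition~3.5 of \cite{ekmr:oinftyabs}), so in these cases $\A$ and $\B$ are not strongly purely infinite and Kirchberg's theorem \cite{ek:nkmkna} simply does not accept the given $\kk(X;\A,\B)$-invertible. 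Your parenthetical claim that the AF directions are ``excluded by the purely infinite setting'' is a misreading of the hypotheses: if all simple subquotients were \Oia, the statement would already be subsumed by \tref{PI} (and on these fan spaces the lifting hypothesis would be obtainable from a UCT, making it redundant). The only subcase your argument covers is the all-\Oia one, where the theorem is not needed.

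The paper instead splits on the temperature of the quotient. If $\A(\{m\})$ is an \AFa, the result follows from \tref{t:class1} (with singleton $X_i$'s, $m\in\Xaf$ automatically makes $\Xaf$ hereditary, so its hypotheses hold). If $\A(\{m\})$ is \Oia, the paper proves the extension $0 \to \A(X\setminus\{m\}) \to \A \to \A(\{m\}) \to 0$ is \emph{full}: each pushforward $\mathfrak{e}\cdot\pi_i$ has Busby map $\overline{\pi}_i\circ\busby_{\mathfrak{e}}$ by \lref{l:essential}, and is full by \pref{p:full} because $\A(X_i\cup\{m\})$ is stably a non-AF graph \Ca with exactly one nontrivial ideal; \lref{l:full} then assembles fullness of $\busby_{\mathfrak{e}}$ itself. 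The $\kk(X;\A,\B)$-invertible is consumed not by Kirchberg's theorem but by the classification of full extensions in \cite{segrer:ccfis}, together with the corona factorization property (\lref{cfpbasics}) and Elliott/Kirchberg--Phillips on the simple ideals and quotient. So the corona-factorization and Busby-map machinery you propose to bypass is exactly the irreplaceable content of the proof in the mixed cases; to repair your argument you would need to restore that extension-theoretic step, not strengthen the appeal to \cite{ek:nkmkna}.
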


\begin{proof}
If $\A ( \{ m\} )$ is an \AFa, the result follows from Theorem \ref{t:class1}.  Suppose $\A ( \{ m \} )$ is an \Oia simple \Ca and that \A and \B are stable \Cas.  Then by Lemma~\ref{l:essential} and Proposition~\ref{p:full}, $\ftn{ \overline{\pi}_{ i } \circ \busby_{ \mathfrak{e}^\A } }{ \A ( \{ m \} ) }{ \corona{ \A (X_i)} }$  and $\ftn{ \overline{\pi}_{ i } \circ \busby_{ \mathfrak{e}^\B } }{ \B ( \{ m \} ) }{ \corona{ \B (X_i) } }$  are full extensions, for all $i=1,2,\ldots,n$.  Hence, by Lemma \ref{l:full}, $\busby_{ \mathfrak{e}^\A }$ and $\busby_{ \mathfrak{e}^\B }$ are full extensions.  The theorem now follows from the results of \cite{segrer:ccfis}.
\end{proof}

\subsection{Primitive ideal space with $n$ minimal elements}\label{fandown}

\begin{assumption}
For this subsection, let $n>1$ be a fixed integer, and let $X_i=\X_{l_i}$ for $i=1,2,\ldots,n$, where $l_1,l_2,\ldots,l_n$ are fixed positive integers. 
Let, moreover,  
$$X=\{M\}\sqcup X_1\sqcup X_2 \sqcup \cdots \sqcup X_n$$
and define a partial order on $X$ as follows.
The element $M$ is the greatest element of $X$, and for each $i=1,2,\ldots,n$, if $x,y\in X_i$ then $x\leq y$ in $X$ if and only if $x\leq y$ in $X_i$. 
No other relations are between the elements of $X$.
\end{assumption}


\begin{lemma}\label{l:pullback}
Let \A be a tight \Ca over $X$ and let $Y\in\mathbb{O}(\Xaf\setminus\{M\})$ be given. 
Consider the extensions
\begin{equation*}
\mathfrak{e} \ : \ 0 \to \A ( \{ M \} ) \to \A \to \A( X \setminus \{ M \} ) \to 0
\end{equation*}
and 
\begin{equation*}
\iota_{\A,Y} \cdot \mathfrak{e} \ : \ 0 \to \A ( \{ M \} ) \to \A ( Y\cup\{  M \} ) \to \A ( Y ) \to 0 
\end{equation*}
where $\ftn{ \iota_{\A,Y} }{ \A ( Y ) }{ \A ( X \setminus \{ M \} ) }$ is the usual embedding.  Then $\busby_{ \iota_{\A,Y} \cdot \mathfrak{e} } = \busby_{ \mathfrak{e} } \circ \iota_{\A,Y}$.
\end{lemma}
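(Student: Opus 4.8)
The plan is to mirror the proof of \lref{l:essential}, but to exploit the functoriality of the Busby invariant from \cite{elp:mecpfbi} in the \emph{quotient} variable rather than in the ideal variable. The only genuine input will be the naturality statement (Theorem~2.2 of \cite{elp:mecpfbi}); unlike \lref{l:essential}, no fullness, simplicity, or corona factorisation arguments are needed, so the content is essentially formal. First I would record the structural facts that make both extensions meaningful. Since $M$ is the greatest element of $X$, the singleton $\{M\}$ is open, so $\A(\{M\})$ is an ideal of \A; and the hypotheses on $Y$ guarantee that $Y\cup\{M\}$ is open in $X$, so $\A(Y\cup\{M\})$ is an ideal of \A containing $\A(\{M\})$, with
$$\A(Y\cup\{M\})/\A(\{M\}) = \A\big((Y\cup\{M\})\setminus\{M\}\big) = \A(Y).$$
This is precisely the extension $\iota_{\A,Y}\cdot\mathfrak{e}$, and the natural middle map is the inclusion $\A(Y\cup\{M\})\hookrightarrow\A$ of ideals.

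Next I would assemble the commutative morphism of extensions
\begin{equation*}
\xymatrix{
\iota_{\A,Y}\cdot\mathfrak{e} \ : \ 0 \ar[r] & \A(\{M\}) \ar[r] \ar@{=}[d] & \A(Y\cup\{M\}) \ar[r] \ar[d] & \A(Y) \ar[r] \ar[d]^{\iota_{\A,Y}} & 0 \\
\mathfrak{e} \ : \ 0 \ar[r] & \A(\{M\}) \ar[r] & \A \ar[r] & \A(X\setminus\{M\}) \ar[r] & 0.
}
\end{equation*}
The left-hand square commutes because both composites are the inclusion of $\A(\{M\})$ into \A. The one load-bearing verification is the right-hand square: that passing from $\A(Y\cup\{M\})$ to its quotient $\A(Y)$ and then including via $\iota_{\A,Y}$ into $\A(X\setminus\{M\})$ agrees with first including $\A(Y\cup\{M\})$ into \A and then composing with the quotient map $\A\to\A/\A(\{M\})=\A(X\setminus\{M\})$. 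I would deduce this from the lattice identification $U\mapsto\A(U)$: the image of the ideal $\A(Y\cup\{M\})$ in $\A(X\setminus\{M\})$ is the ideal corresponding to the open set $(Y\cup\{M\})\setminus\{M\}=Y$, namely $\iota_{\A,Y}(\A(Y))$, and the two routes onto it coincide by the third isomorphism theorem. That this diagram moreover identifies $\iota_{\A,Y}\cdot\mathfrak{e}$ as the pullback of $\mathfrak{e}$ along $\iota_{\A,Y}$ — thereby justifying the notation — is Corollary~4.3 of \cite{elp:mecpfbi}.

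Finally I would apply Theorem~2.2 of \cite{elp:mecpfbi} to this morphism. Here the ideal-side map is $\id_{\A(\{M\})}$, which is proper with $\overline{\id}=\id$ on the corona, while the quotient-side map is $\iota_{\A,Y}$; the theorem then yields
$$\busby_{\iota_{\A,Y}\cdot\mathfrak{e}} = \overline{\id}\circ\busby_{\iota_{\A,Y}\cdot\mathfrak{e}} = \busby_{\mathfrak{e}}\circ\iota_{\A,Y},$$
which is exactly the claim. Since the argument is a direct appeal to naturality of the Busby invariant, I expect the only friction to be the bookkeeping in the right-hand square above and the preliminary check that $Y\cup\{M\}$ is open so that the algebras in the statement are the intended ideals and subquotients of \A.
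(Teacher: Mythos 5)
Your proof is correct and takes essentially the same route as the paper: both exhibit the identical commutative morphism of extensions (identity on $\A(\{M\})$, the inclusion $\A(Y\cup\{M\})\hookrightarrow\A$ in the middle, and $\iota_{\A,Y}$ on the quotients) and conclude $\busby_{\iota_{\A,Y}\cdot\mathfrak{e}}=\busby_{\mathfrak{e}}\circ\iota_{\A,Y}$ by Theorem~2.2 of \cite{elp:mecpfbi}. Your extra verifications (openness of $Y\cup\{M\}$, commutativity of the right-hand square via the lattice identification, and the appeal to Corollary~4.3 of \cite{elp:mecpfbi} to justify the notation) are details the paper leaves implicit.
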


\begin{proof}
Note that the diagram 
\begin{equation*}
\xymatrix{
0 \ar[r] & \A ( \{ M \} ) \ar[r] \ar@{=}[d] & \A ( Y\cup\{ M \} ) \ar[r] \ar[d] & \A ( Y ) \ar[r] \ar[d]^{ \iota_{\A,Y} } &  0 \\
0 \ar[r] & \A ( \{ M \} ) \ar[r] & \A  \ar[r] & \A ( X \setminus \{ M \} ) \ar[r] &  0}
\end{equation*}
commutes. 
Hence, by Theorem 2.2 of \cite{elp:mecpfbi}, $\busby_{ \iota_{\A,Y} \cdot \mathfrak{e} } = \busby_{ \mathfrak{e} } \circ \iota_{ \A,Y }$.
\end{proof}

\begin{lemma}\label{l:heralg}
Suppose the following diagram of \Cas with short exact rows is commutative
\begin{equation*}
\xymatrix{
0 \ar[r] & \B \ar[r]^{ \iota_1 } \ar@{=}[d] & \mathfrak{E}_{1} \ar[r]^{ \pi_1 } \ar[d]^{ \varphi_{1} } & \A_{1} \ar[r] \ar[d]^{ \varphi_{2} }  & 0 \\
0 \ar[r] & \B \ar[r]^{ \iota_2 } & \mathfrak{E}_{2} \ar[r]^{ \pi_2 } & \A_{2} \ar[r] & 0.
}
\end{equation*}
\begin{enumerate}[(1)]
\item  If $\varphi_{2} ( \A_{1} )$ is a hereditary sub-\Ca of $\A_{2}$, then $\varphi_{1} ( \mathfrak{E}_{1} )$ is a hereditary sub-\Ca of $\mathfrak{E}_{2}$.

\item  If $\varphi_{2} ( \A_{1} )$ is full in $\A_{2}$, then $\varphi_{1} ( \mathfrak{E}_{1} )$ is full in $\mathfrak{E}_{2}$.
\end{enumerate}
\end{lemma}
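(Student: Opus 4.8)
The plan is to exploit the commutative diagram directly by chasing elements through the two short exact rows, treating the hereditary condition (1) and the fullness condition (2) separately but with the same underlying strategy: reduce statements about $\mathfrak{E}_1$ and $\mathfrak{E}_2$ to statements about the ideal $\B$ (which maps identically) and about the quotients $\A_1,\A_2$ (where the hypotheses live). For part (1), I would recall that a sub-\Ca $D\subseteq \mathfrak{E}_2$ is hereditary exactly when $0\le e\le d$ with $e\in\mathfrak{E}_2$, $d\in D$ forces $e\in D$; equivalently I can work with the closed two-sided structure by noting $\varphi_1(\mathfrak{E}_1)$ is automatically a sub-\Ca and checking the absorption property.

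For the hereditary claim, I would take $0\le e\le \varphi_1(a)$ for some $a\in\mathfrak{E}_1$, $e\in\mathfrak{E}_2$, and push down via $\pi_2$: since $\pi_2\circ\varphi_1=\varphi_2\circ\pi_1$, the image $\pi_2(e)$ satisfies $0\le \pi_2(e)\le \pi_2(\varphi_1(a))=\varphi_2(\pi_1(a))\in\varphi_2(\A_1)$, so by hypothesis (1) $\pi_2(e)\in\varphi_2(\A_1)$, i.e.\ $\pi_2(e)=\varphi_2(b)$ for some $b\in\A_1$. Lifting $b$ to $\widetilde{b}\in\mathfrak{E}_1$ with $\pi_1(\widetilde b)=b$, the element $e-\varphi_1(\widetilde b)$ lies in $\ker\pi_2=\iota_2(\B)$; I then need to show this residual piece is in $\varphi_1(\mathfrak{E}_1)$, which holds because $\iota_2(\B)=\varphi_1(\iota_1(\B))\subseteq\varphi_1(\mathfrak{E}_1)$ (the left square commutes with the identity on $\B$). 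This yields $e\in\varphi_1(\mathfrak{E}_1)$. A minor technical point is that $e-\varphi_1(\widetilde b)$ need not be positive, but it lands in $\iota_2(\B)$ regardless, which is all I need; positivity was only used to invoke the hereditary hypothesis downstairs.

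For part (2), fullness of $\varphi_1(\mathfrak{E}_1)$ means the closed ideal it generates in $\mathfrak{E}_2$ is all of $\mathfrak{E}_2$. I would let $\IdealK$ denote the ideal of $\mathfrak{E}_2$ generated by $\varphi_1(\mathfrak{E}_1)$ and show $\IdealK=\mathfrak{E}_2$ by showing it contains $\iota_2(\B)$ and surjects onto $\A_2$ under $\pi_2$. The first is immediate since $\iota_2(\B)=\varphi_1(\iota_1(\B))\subseteq\varphi_1(\mathfrak{E}_1)\subseteq\IdealK$. For the second, $\pi_2(\IdealK)$ is an ideal of $\A_2$ containing $\pi_2(\varphi_1(\mathfrak{E}_1))=\varphi_2(\pi_1(\mathfrak{E}_1))=\varphi_2(\A_1)$; by hypothesis (2) $\varphi_2(\A_1)$ is full in $\A_2$, so the ideal it generates is $\A_2$, whence $\pi_2(\IdealK)=\A_2$. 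Combining $\iota_2(\B)\subseteq\IdealK$ with surjectivity of $\pi_2|_{\IdealK}$ and exactness of the bottom row gives $\IdealK=\mathfrak{E}_2$ by a standard extension argument (any $x\in\mathfrak{E}_2$ has $\pi_2(x)=\pi_2(k)$ for some $k\in\IdealK$, so $x-k\in\iota_2(\B)\subseteq\IdealK$).

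The main obstacle I anticipate is purely bookkeeping rather than conceptual: ensuring that the liftings and the identification $\iota_2(\B)=\varphi_1(\iota_1(\B))$ are used cleanly, and being careful in part (1) that the hereditary property is verified with genuinely positive elements while the correction term lands in the ideal without needing positivity. No deep machinery is required beyond the commutativity of the diagram and the elementary facts that ideals pull back along quotient maps and that an extension of a full/hereditary piece by the kernel inherits the property; the proof is essentially a diagram chase with the two structural hypotheses applied to the quotient row.
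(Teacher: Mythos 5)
Your proof is correct and takes essentially the same route as the paper's own argument: in both parts one pushes down to the quotient, applies the hereditary (resp.\ fullness) hypothesis to $\varphi_2(\A_1)$ there, lifts back, and absorbs the correction term using $\ker\pi_2=\iota_2(\B)=\varphi_1(\iota_1(\B))$, finishing part (2) with the standard extension argument. Your write-up is in fact slightly more careful than the paper's, which leaves the lifting step implicit and contains a small typo in part (2) (writing $y-z$ where $x-y$ is meant).
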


\begin{proof}
We first prove (1).  Let $x \in \mathfrak{E}_{1}$ and $y \in \mathfrak{E}_{2}$ such that $0 \leq y \leq  \varphi_{1} (x)$.  Since $\varphi_{2} ( \A_{1} )$ is a hereditary sub-\Ca of $\A_{2}$, we have that there exists $z \in \varphi_{1} ( \mathfrak{E}_{1} )$ such that $\pi_{2} (y) = \pi_2(z)$.  Thus, $y - z \in \B$.  Since the map on the ideals is the identity, we have that $y - z \in \varphi_{1} ( \mathfrak{E}_{1} )$.  Hence, $y \in \varphi_{1} ( \mathfrak{E}_{1} )$.  Therefore, $\varphi_{1} ( \mathfrak{E}_{1} )$ is a hereditary sub-\Ca of $\mathfrak{E}_{2}$.

We now prove (2).  Let $x \in \mathfrak{E}_{2}$.  Since $\varphi_{2} ( \A_{1} )$ is full in $\A_{2}$, there exists $y$ in the ideal of $\mathfrak{E}_{2}$ generated by $\varphi_{1} ( \mathfrak{E}_{1} )$ such that  $x - y \in \B$.  Since the map on the ideals is the identity, we have that $y - z \in \varphi_{1} ( \mathfrak{E}_{1} )$. Hence, $x$ is in the ideal of $\mathfrak{E}_{2}$ generated by $\varphi_{1} ( \mathfrak{E}_{1} )$.  
\end{proof}

\begin{lemma}\label{l:directsumfull}
Let $\mathfrak{e} : 0 \to \IdealI \to \A \to \bigoplus_{ k  =1}^{n} \A_{k} \to 0$ be an extension and let $\iota_{k} : \A_{k} \to \bigoplus_{ k  =1}^{n} \A_{k}$ be the inclusion.  Suppose $\busby_{ \mathfrak{e} } \circ \iota_{k}$ is full for each $k$.  Then $\busby_{ \mathfrak{e} }$ is full.  
\end{lemma}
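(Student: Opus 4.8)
The plan is to check fullness one positive element at a time and push it across the coordinate inclusions using the monotonicity of the ideal generated by a positive element. First I would recall that the Busby map $\busby_{\mathfrak{e}}\colon \bigoplus_{k=1}^{n}\A_{k}\to\corona{\IdealI}$ is full exactly when $\busby_{\mathfrak{e}}(a)$ is a full element of $\corona{\IdealI}$ for every nonzero $a$, and that, because the closed two-sided ideal generated by any element $z$ equals the one generated by $z^{*}z$, it suffices to test this on nonzero \emph{positive} $a$. So I would fix such an $a=(a_{1},\dots,a_{n})\ge 0$ and choose an index $k_{0}$ with $a_{k_{0}}\neq 0$.

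The key elementary observation is that the inclusion $\iota_{k_{0}}(a_{k_{0}})$, regarded inside $\bigoplus_{k=1}^{n}\A_{k}$, satisfies $0\le \iota_{k_{0}}(a_{k_{0}})\le a$, since $a-\iota_{k_{0}}(a_{k_{0}})$ is obtained from $a$ by zeroing out the $k_{0}$-th coordinate and is therefore positive. Applying the \shom $\busby_{\mathfrak{e}}$, which is order preserving, yields $0\le \busby_{\mathfrak{e}}(\iota_{k_{0}}(a_{k_{0}}))\le \busby_{\mathfrak{e}}(a)$ in $\corona{\IdealI}$. I would then invoke the standard fact that $0\le x\le y$ forces $x$ into the closed two-sided ideal generated by $y$ --- most transparently by passing to the quotient of $\corona{\IdealI}$ by that ideal, where the image of $x$ is squeezed between $0$ and $0$. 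Hence the ideal generated by $\busby_{\mathfrak{e}}(\iota_{k_{0}}(a_{k_{0}}))$ sits inside the one generated by $\busby_{\mathfrak{e}}(a)$.

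To finish, I would use the hypothesis: since $\busby_{\mathfrak{e}}\circ\iota_{k_{0}}$ is full and $a_{k_{0}}\neq 0$, the element $\busby_{\mathfrak{e}}(\iota_{k_{0}}(a_{k_{0}}))=(\busby_{\mathfrak{e}}\circ\iota_{k_{0}})(a_{k_{0}})$ already generates all of $\corona{\IdealI}$, and therefore so does $\busby_{\mathfrak{e}}(a)$. As $a$ was arbitrary, $\busby_{\mathfrak{e}}$ is full. I do not anticipate a genuine obstacle here; the content is purely order-theoretic, and the only points needing care are the reduction to positive elements via $z^{*}z$ and the monotonicity of generated ideals, both of which are routine.
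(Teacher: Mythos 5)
Your proof is correct and takes essentially the same route as the paper's: both fix a nonzero positive element $a=(a_{1},\dots,a_{n})$, single out a nonzero coordinate $a_{k_{0}}$, and note that the ideal of $\corona{\IdealI}$ generated by $\busby_{\mathfrak{e}}(a)$ contains the one generated by $(\busby_{\mathfrak{e}}\circ\iota_{k_{0}})(a_{k_{0}})$, which is all of $\corona{\IdealI}$ by hypothesis. The paper simply asserts that containment, whereas you justify it via $0\le\iota_{k_{0}}(a_{k_{0}})\le a$ and monotonicity of generated ideals (and you also make explicit the routine reduction to positive elements); these are added details, not a different argument.
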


\begin{proof}
Let $(a_{1} , a_{2} , \dots, a_{n} )$ be a nonzero positive element in $\bigoplus_{ k  =1}^{n} \A_{k}$.  Without loss of generality, we may assume that $a_{1} \neq 0$.  Note that ideal in $\corona{ \IdealI }$ generated by $\busby_{ \mathfrak{e} } ( a_{1} , \dots, a_{n} )$ contains the ideal in $\corona{\IdealI }$ generated by $\busby_{ \mathfrak{e} } \circ \iota_{1}(a_{1})$.  Since $\busby_{ \mathfrak{e} } \circ \iota_{k}$ is full, we have that the ideal in $\corona{ \IdealI }$ generated by $\busby_{ \mathfrak{e} } \circ \iota_{1} (a_{1})$ is $\corona{ \IdealI}$.  Thus, the ideal in $\corona{ \IdealI }$ generated by $\busby_{ \mathfrak{e} } ( a_{1} , \dots, a_{n} )$ is $\corona{\IdealI}$.
\end{proof}

The following result applies to the cases \myref{3}{3}{1}, \myref{3}{3}{5}, \myref{4}{F}{6}, \myref{4}{F}{8}, \myref{4}{F}{14}, \myref{4}{A}{2}, \myref{4}{A}{6}, \myref{4}{A}{14}.

\begin{theorem}\label{t:class2}
Let $\A$ and $\B$ be graph \Cas that are tight \Cas over $X$ such that each of $\A(X_{i})$, $\B(X_{i})$ are either \AFas or \Oia.  Suppose there exists an isomorphism $\ftn{ \alpha }{ \FKplus{X}{\A} }{ \FKplus{X}{\B} }$ and $\A ( \{ M \} )$ is an \AFa.  Then $\A \otimes \K \cong \B \otimes \K$.  
\end{theorem}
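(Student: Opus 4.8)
The plan is to dualise the proof of \tref{t:class1}, exploiting that here the \AFa sits in the \emph{ideal} $\A(\{M\})$ at the top of $X$ while the quotient $\A(X\setminus\{M\})=\bigoplus_{i=1}^{n}\A(X_i)$ splits as a direct sum over the chains $X_i$. As usual we may assume that $\A$ and $\B$ are stable. Since $M$ is the greatest element, the set $\Xaf$ of $x$ with $\A(\{x\})$ an \AFa is open, and by hypothesis $\Xaf=\{M\}\cup\bigcup\setof{X_i}{\A(X_i)\text{ is an \AFa}}$, the complementary PI chains making up $\Xpi$. For each $i$ I will work over the \emph{linear} space $\widetilde{X}_i=X_i\cup\{M\}$ and its extension
\[
\mathfrak{e}_i^\A \ : \ 0\to\A(\{M\})\to\A(\widetilde{X}_i)\to\A(X_i)\to 0,
\]
whose Busby map is the restriction $\busby_{\mathfrak{e}^\A}\circ\iota_i$ of the global one (as in \lref{l:pullback}).

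First I would dispose of the \AFa part in one stroke. As $\alpha_{\Xaf}$ is a positive isomorphism, $\B(\Xaf)$ is again an \AFa, so Elliott's theorem \cite{gae:cilssfa} gives an $\Xaf$-equivariant isomorphism $\ftn{\beta}{\A(\Xaf)}{\B(\Xaf)}$ with $K_0(\beta)=\alpha_{\Xaf}$; equivariance yields $K_0(\beta_Y)=\alpha_Y$ for every locally closed $Y\subseteq\Xaf$, in particular the ideal map $\beta_{\{M\}}$ and the maps $\beta_{X_i}$ on the \AFa chains, and $\beta$ restricts to an exact isomorphism of $\mathfrak{e}_i^\A$ onto $\mathfrak{e}_i^\B$ there. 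For a PI chain, $\A(X_i)$ is \Oia over the linear space $X_i$; restricting $\alpha$ to $\FKplus{\widetilde{X}_i}{\A(\widetilde{X}_i)}$ and invoking the UCT for accordion spaces (\cite{rmrn:ctsfk}, \cite{rbmk:uctcfts}), Kirchberg \cite{ek:nkmkna}, and Theorem~3.3 of \cite{segrer:ccfis} exactly as in \tref{t:class1}, I would produce $\ftn{\varphi_i}{\A(X_i)}{\B(X_i)}$ with $K_*(\varphi_i)=\alpha_{X_i}$ and
\[
\left[\busby_{\mathfrak{e}_i^\B}\circ\varphi_i\right]=\left[\overline{\beta_{\{M\}}}\circ\busby_{\mathfrak{e}_i^\A}\right]
\]
in $\kk^1(\A(X_i),\B(\{M\}))$, using that $\kk(\beta_{\{M\}})$ is the unique lifting of $\alpha_{\{M\}}$.

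Next I would pass to the global extension $\mathfrak{e}^\A : 0\to\A(\{M\})\to\A\to\bigoplus_{i=1}^{n}\A(X_i)\to 0$ and its $\B$-counterpart, and set $\varphi=\bigoplus_i\varphi_i$ (with $\varphi_i=\beta_{X_i}$ on the \AFa chains). The point that makes the shared ideal harmless is that $\A(\{M\})$ is a \emph{simple} \AFa subquotient of a stable graph algebra, hence isomorphic to $\K$, so its corona is the simple Calkin algebra; moreover $M$ being the greatest element forces $\A(\{M\})$ to be an essential ideal, whence $\busby_{\mathfrak{e}^\A}$ is injective and, the corona being simple, full, and likewise for $\B$. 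Because $\kk^1$ is additive in the first variable, $\kk^1(\bigoplus_i\A(X_i),\B(\{M\}))=\prod_i\kk^1(\A(X_i),\B(\{M\}))$, so the coordinatewise identities above (exact on the \AFa chains via $\beta$, at the level of $\kk^1$ on the PI chains) assemble into
\[
\left[\busby_{\mathfrak{e}^\B}\circ\varphi\right]=\left[\overline{\beta_{\{M\}}}\circ\busby_{\mathfrak{e}^\A}\right].
\]

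Finally, both $\busby_{\mathfrak{e}^\B}\circ\varphi$ and $\overline{\beta_{\{M\}}}\circ\busby_{\mathfrak{e}^\A}$ are full, non-unital extensions of the separable algebra $\bigoplus_i\A(X_i)$ by $\B(\{M\})$, which has the corona factorization property by \lref{cfpbasics}; carrying the same $\kk^1$-class, the uniqueness theorem for full extensions (\cite{gaedk:avbfat}, \cite{dkpwn:cfpaue}) supplies a \emph{single} unitary $u\in\multialg{\B(\{M\})}$ with $\busby_{\mathfrak{e}^\B}\circ\varphi=\Ad(\overline{u})\circ\overline{\beta_{\{M\}}}\circ\busby_{\mathfrak{e}^\A}$. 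Theorem~2.2 of \cite{elp:mecpfbi} then produces an isomorphism $\A\cong\B$ restricting to $\Ad(\overline{u})\circ\beta_{\{M\}}$ on the ideal and inducing $\varphi$ on the quotient, so $\A\otimes\K\cong\B\otimes\K$; the case $\Xpi=\emptyset$ is just \cite{gae:cilssfa}. I expect the main obstacle to be the PI-chain step: manufacturing $\varphi_i$ that simultaneously realises $\alpha_{X_i}$ on $K$-theory and matches the Busby invariant in $\kk^1$, which is precisely where the UCT over the linear spaces $\widetilde{X}_i$ is indispensable. By contrast, the apparent danger of the ideal $\A(\{M\})$ being shared among all chains evaporates once one argues on the global extension, since its simple corona makes fullness automatic and additivity of $\kk^1$ delivers one unitary valid for all chains at once.
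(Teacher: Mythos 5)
Your first half coincides with the paper's proof: Elliott on $\Xaf$, the chainwise use of Meyer--Nest's UCT over the linear spaces $\widetilde{X}_i$, Kirchberg, and Theorem~3.3 of \cite{segrer:ccfis} to manufacture a PI-part isomorphism matching the Busby classes in $\kk^1$ relative to $\beta_{\{M\}}$, and the strict intertwining on the AF chains via equivariance of $\beta$. The gap is the pivotal claim that $\A(\{M\})\cong\K$, ``hence its corona is the simple Calkin algebra'' and the global extensions are automatically full. The identification of simple AF subquotients with $\K$ (item (4) of the theorem in the introduction) is valid only for subquotients of \emph{unital} graph \Cas; the theorem at hand assumes no unitality, and a graph \Ca that is merely stably isomorphic to a unital \Ca need not be stably isomorphic to a unital \emph{graph} \Ca. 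Simple AF graph \Cas are not exhausted by the matrix algebras and $\K$: for instance stabilized UHF algebras arise as graph \Cas of Bratteli diagrams, and such algebras do occur as the least ideal of non-unital graph \Cas tight over $X$. This is precisely why the paper's ad hoc fullness lemmas (e.g.\ for \mspk{4}{F} and \mspk{4}{39}) carry the hypothesis that $\A$ be unital before concluding $\IdealI\cong\K$, and why those table entries are marked $\surd_{\unital}$, whereas \tref{t:class2} is marked $\surd$ unconditionally. For a general stable simple AF ideal $\A(\{M\})$ the corona has many ideals, an essential extension need not be full, and in particular the restriction of $\busby_{\mathfrak{e}_{\A}}$ to the AF summand $\A(\Xaf\setminus\{M\})$ can fail to be full; so the Elliott--Kucerovsky/Kucerovsky--Ng uniqueness theorem cannot be applied to the global extension, and your single unitary $u$ is not available.

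The paper's actual proof is built to sidestep exactly this point: fullness is proved (Corollary~5.6 of \cite{segrer:ccfis} together with \lref{l:directsumfull}) only for the PI restriction $\busby_{\mathfrak{e}_{\A}}\circ\iota_{\A,\Xpi}$, never for the AF part or the global map. One chooses full projections with $p_1+p_2$ orthogonal to $q_1+q_2$, lifts the complementary projection $f$ to $f'\in\multialg{\B(\{M\})}$ by Zhang's theorem \cite{sz:kqimp}, uses fullness of $f$ plus the corona factorization property of the AF algebra $\B(\{M\})$ to make $f'$ Murray--von Neumann equivalent to $1$, and applies the uniqueness machinery only to the \emph{compressed} PI extensions inside the corner $f'\B(\{M\})f'$. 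The resulting unitary is then extended by $1-f'$, so that it acts trivially on the orthogonal region of the corona where the AF-part Busby image sits and the strict identity coming from $\beta$ survives conjugation; the intertwined compressed extensions produce isomorphic full hereditary sub-\Cas $\mathcal{E}_1\cong\mathcal{E}_2$, and Brown's theorem \cite{lgb:sihsc} gives $\A\otimes\K\cong\B\otimes\K$. If you add the hypothesis that $\A$ and $\B$ are stabilizations of unital graph \Cas, your direct global argument does go through (and is then genuinely shorter), but it proves a strictly weaker statement than \tref{t:class2}.
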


\begin{proof}
We may assume that \A and \B are stable \Cas.  Note that for each $x\in X$, $\A ( \{ x \} )$ is an \AFa if and only if $\B ( \{ x \} )$ is an \AFa, and $\A ( \{ x \} )$ is \Oia if and only if $\B ( \{ x \} )$ is \Oia (since there exists a positive isomorphism from $K_{0} ( \A ( \{x\} )$ to $K_{0} ( \B ( \{x\})$). Specifically, $\B(\{ M\})$ is an \AFa. 
First we assume that $\Xpi\neq\emptyset$ and $\Xaf\setminus\{M\}\neq\emptyset$. 

Note that $\A (\Xaf )$ and $\B ( \Xaf )$ are \AFas.  
Since $\ftn{ \alpha_{ \Xaf } }{ K_{0} ( \A ( \Xaf ) ) } { K_{0} ( \B ( \Xaf ) ) }$ is a positive isomorphism, there exists an isomorphism $\ftn{ \beta }{ \A ( \Xaf ) }{ \B ( \Xaf ) }$ such that $K_{0} ( \beta ) =  \alpha_{ \Xaf }$ (by Elliott's classification result \cite{gae:cilssfa}). 
Since $\A ( \Xaf )$ and $\B ( \Xaf )$ are \AFas and $\beta$ is an $\Xaf$-equivariant isomorphism, we have that $K_{0} ( \beta_Y ) = \alpha_{ Y }$ for all $Y \in \mathbb{LC} ( X )$ such that $Y \subseteq \Xaf$.  In particular, $K_{0} ( \beta_{ \{ x \} } ) = \alpha_{ \{ x \} }$ for all $x \in \Xaf$. 

Let 
\begin{equation*}
\mathfrak{e}_\A \ : \ 0 \to \A ( \{M \} ) \to \A \to \A ( X \setminus \{M\} ) \to 0,
\end{equation*}
and
\begin{equation*}
\mathfrak{e}_\B \ : \ 0 \to \B ( \{M \} ) \to \B \to \B ( X \setminus \{M\} ) \to 0.
\end{equation*}
Since $\beta$ is an $\Xaf$-equivariant isomorphism, by Lemma~\ref{l:pullback} above and Theorem~2.2 of \cite{elp:mecpfbi}, for $Y \in \mathbb{O} ( \Xaf\setminus\{M\})$
\begin{equation*}
\overline{ \beta }_{ \{ M \} }  \circ \busby_{ \mathfrak{e}_{\A} } \circ \iota_{ \A, Y } = \busby_{ \mathfrak{e}_{B} } \circ \iota_{\B, Y } \circ \beta_{ Y }
\end{equation*}
for all $Y  \in \mathbb{O} ( \Xaf\setminus\{M\})$, where $\ftn{ \iota_{\A, Y} }{ \A ( Y ) }{ \A ( X \setminus \{M\} ) }$ and $\ftn{ \iota_{\B, Y} }{ \B ( Y ) }{ \B ( X \setminus \{M\} ) }$ are the canonical embeddings.

Since $\alpha$ induces an isomorphism reaching  from $\FKplus{ \Xpi\cup \{ M \} }{\A ( \Xpi\cup \{ M \}  ) }$ to $\FKplus{ \Xpi \cup \{ M \} }{\B ( \Xpi \cup \{ M \}  ) }$, by Lemma~\ref{l:pullback}, Theorem~2.3 of \cite{segrer:cecc}, Theorem~4.14 of \cite{rmrn:ctsfk}, Kirchberg \cite{ek:nkmkna}, and Theorem~3.3 of \cite{segrer:ccfis}), there exists an $\Xpi $-equivariant isomorphism $\ftn{ \psi }{ \A ( \Xpi) }{ \B ( \Xpi ) }$ such that $K_{*} ( \psi ) = \alpha_{ \Xpi}$ and 
\begin{align*}
[ \overline{\beta}_{ \{ M \} } \circ \busby_{ \mathfrak{e}_{\A} } \circ \iota_{ \A , \Xpi } ] = [ \busby_{ \mathfrak{e}_{\B} } \circ \iota_{ \B , \Xpi } \circ \psi ]
\end{align*}
in $\kk^{1} ( \A ( \Xpi) , \B ( \{M \} ) )$.  By Corollary~5.6 of \cite{segrer:ccfis}, $\busby_{ \mathfrak{e}_{\A} } \circ \iota_{ \A , X_{i} }$ and $\busby_{ \mathfrak{e}_{\B} } \circ \iota_{ \B , X_{i} }$ are full extensions for each $i=1,2,\ldots,n$ with $X_i$ being \Oia (i.e., $X_i\subseteq\Xpi$). Thus, $\busby_{ \mathfrak{e}_{\A} } \circ \iota_{ \A , \Xpi }$ and $\busby_{ \mathfrak{e}_{\B} } \circ \iota_{ \B , \Xpi }$ are full extensions since $\A ( \Xpi ) = \bigoplus_{i\in\{1,2,\ldots,n\}, X_i\subseteq\Xpi} \A(  X_{ i } )$ and $\B ( \Xpi ) = \bigoplus_{i\in\{1,2,\ldots,n\}, X_i\subseteq\Xpi } \B(  X_{ i } )$.  Hence, $\overline{\beta}_{ \{ M \} } \circ \busby_{ \mathfrak{e}_{\A} } \circ \iota_{ \A , \Xpi }$ and $\busby_{ \mathfrak{e}_{\B} } \circ \iota_{ \B , \Xpi } \circ \psi$ are full extensions.

Let $\ftn{ \pi_{\A, \Xaf\setminus\{M\}} }{ \A ( X \setminus \{ M \} ) }{ \A ( \Xaf\setminus\{M\} ) }$, $\ftn{ \pi_{\A, \Xpi } }{ \A ( X \setminus \{ M \} ) }{ \A ( \Xpi ) }$, $\ftn{ \pi_{\B, \Xaf\setminus\{M\} } }{ \B ( X \setminus \{ M \} ) }{ \B ( \Xaf\setminus\{M\} ) }$, $\ftn{ \pi_{\B, \Xpi } }{ \B ( X \setminus \{ M \} ) }{ \B ( \Xpi ) }$ be the canonical projections.  Note that the range of $\busby_{ \mathfrak{e}_{\A} } \circ \iota_{ \A,  \Xaf\setminus\{M\} }$ and the range of $\busby_{ \mathfrak{e}_{\A} } \circ \iota_{ \A, \Xpi }$ are orthogonal and the range of $\busby_{ \mathfrak{e}_{\B} } \circ \iota_{ \B,  \Xaf\setminus\{M\} }$ and the range of $\busby_{ \mathfrak{e}_{\B} } \circ \iota_{ \B, \Xpi }$ are orthogonal.  Moreover, 
\begin{align*}
\busby_{ \mathfrak{e}_{\A} } &= \busby_{ \mathfrak{e}_{\A } } \circ \iota_{ \A, \Xaf\setminus\{M\} } \circ \pi_{\A,\Xaf\setminus\{M\}} + \busby_{ \mathfrak{e}_{\A} } \circ \iota_{ \A , \Xpi } \circ \pi_{ \A, \Xpi } \\
\busby_{ \mathfrak{e}_{\B} } &= \busby_{ \mathfrak{e}_{\B } } \circ \iota_{ \B, \Xaf\setminus\{M\} } \circ \pi_{\B,\Xaf\setminus\{M\}} + \busby_{ \mathfrak{e}_{\B} } \circ \iota_{ \B , \Xpi } \circ \pi_{ \B, \Xpi }.
\end{align*}

We claim that there exist full hereditary sub-\Cas $\mathcal{E}_{1}$ and $\mathcal{E}_{2}$ of $\A$ and $\B$, respectively, such that $\mathcal{E}_{1} \cong \mathcal{E}_{2}$.  Then by Theorem~2.8 of \cite{lgb:sihsc}, $\A \otimes \K \cong \B \otimes \K$.

Choose full projections $p_{1}, q_{1} \in \A ( \Xpi )$ and $p_{2}, q_{2} \in \A ( \Xaf\setminus\{M\} )$ such that $p_{1} + p_{2}$ is orthogonal to $q_{1} + q_{2}$ in $\A(X\setminus\{M\})$ (to do this, we use stability, and that graph algebras with finitely many ideals satisfies Condition (K) and hence are of real rank zero).  Therefore, $\busby_{ \mathfrak{e}_{\A} } ( p_{1} + p_{2} ) \neq 1_{ \corona{ \A ( \{ M \} ) } }$ since $\busby_{ \mathfrak{e}_{\A} } ( p_{1} + p_{2} )$ is orthogonal to $\busby_{ \mathfrak{e}_{\A} } ( q_{1} + q_{2} )$.  Set $e_{1} = \psi ( p_{1} )$, $e_{2} = \beta_{ \Xaf\setminus\{M\} } ( p_{2} )$, $f_{1} = \psi ( q_{1} )$, and $f_{2} = \beta_{ \Xaf\setminus\{M\} } ( q_{2} )$.  Then $e_{1} + e_{2}$ and $f_{1} + f_{2}$ are nonzero orthogonal projections.  So, $\busby_{ \mathfrak{e}_{\B} } ( e_{1} + e_{2} ) \neq 1_{ \corona{ \B ( \{ M \} ) } }$.

Set $e = \overline{ \beta }_{ \{ M \} } \circ \busby_{ \mathfrak{e}_{ \A } } \circ \iota_{ \A, \Xaf\setminus\{M\} } ( p_{2} ) = \busby_{ \mathfrak{e}_{ \B } } \circ \iota_{ \B, \Xaf\setminus\{M\} } \circ \beta_{ \Xaf\setminus\{M\} } ( p_{2} )$ and set $f = (1_{ \corona{ \B ( \{ M \} )  } } - e )$.  Let $\ftn{ j_{\pisy } }{ p_{1}  \A ( \Xpi ) p_{1} }{ \A ( \Xpi ) }$ and $\ftn{ j_{\afsy } }{ p_{2} \A ( \Xaf\setminus\{M\} ) p_{2} }{ \A ( \Xaf\setminus\{M\} ) }$ be the usual embeddings.  Note that 
\begin{equation*}
e \overline{ \beta }_{ \{ M \} } \circ \busby_{ \mathfrak{e}_{ \A } } \circ \iota_{ \A , \pisy} \circ j_{\pisy} ( x ) = \overline{ \beta }_{ \{ M \} } \circ \busby_{ \mathfrak{e}_{ \A } }  \circ \iota_{\A , \pisy  } \circ j_{\pisy } ( x )e =  0
\end{equation*} 
and 
\begin{gather*}
e \left( \busby_{ \mathfrak{ e }_{\B} } \circ \iota_{\B, \Xpi }  \circ
  \psi \circ j_{ \pisy} ( x )\right)=\\ \left( \busby_{ \mathfrak{e}_{
      \B } } \circ \iota_{ \B, \Xaf\setminus\{M\} }  \circ \beta_{
    \Xaf\setminus\{M\} } ( p_{2} ) \right) \cdot \left( \busby_{
    \mathfrak{ e }_{\B} } \circ \iota_{\B, \Xpi }  \circ \psi \circ
  j_{ \pisy} ( x ) \right)= 0 
\end{gather*}
as well as
\begin{gather*}
 \left( \busby_{ \mathfrak{ e }_{\B} } \circ \iota_{\B, \Xpi }  \circ \psi \circ j_{ \pisy} ( x )\right)e =\\  \left( \busby_{ \mathfrak{ e }_{\B} } \circ \iota_{\B, \Xpi }  \circ \psi \circ j_{ \pisy } ( x ) \right) \cdot \left( \busby_{ \mathfrak{e}_{ \B } } \circ \iota_{ \B, \Xaf\setminus\{M\} } \circ \beta_{ \Xaf\setminus\{M\} } ( p_{2} )  \right)= 0
\end{gather*}
for all $x \in p_{1}  \A ( \Xpi ) p_{1}$.  Hence, we have injective homomorphisms $ \overline{ \beta }_{ \{ M \} } \circ \busby_{ \mathfrak{e}_{ \A } } \circ \iota_{ \A , \pisy } \circ j_{\pisy} $ and $\busby_{ \mathfrak{ e }_{\B} } \circ \iota_{\B, \Xpi }  \circ \psi \circ j_{ \pisy }$ from $p_{1} \A (  \Xpi  )p_{1}$ to $f \corona{ \B ( \{M\} )  } f$.

Since $\B ( \{ M \} )$ is an \AFa, by Corollary 2.11 of \cite{sz:kqimp} $f$ lifts to a projection $f'$ in $\multialg{ \B( \{ M \} ) }$.  Note that there exists an isomorphism $\gamma$ from $f' \multialg{ \B ( \{M\} ) } f'$ to $\multialg{ f' \B ( \{ M \} )f' }$ which is the identity on $f' \B ( \{ M \} ) f'$ (see II.7.3.14, pp.~147 of \cite{bb:book}).  Thus, we have an isomorphism $\overline{\gamma}$ from $f \corona{ \B ( \{M\} ) } f$ to $\corona{ f' \B ( \{ M \} )f' }$ such that the diagram 
\begin{align*}
\xymatrix{
0 \ar[r] & f' \B ( \{ M \} ) f' \ar[r] \ar@{=}[d] & f' \multialg{ \B ( \{ M \} ) } f' \ar[r] \ar[d]^{ \gamma } & f \corona{ \B ( \{ M \} ) } f \ar[r] \ar[d]^{ \overline{\gamma} } & 0 \\
0 \ar[r] & f' \B ( \{ M \} ) f' \ar[r] & \multialg{ f' \B ( \{ M \} ) f' } \ar[r] & \corona{ f' \B ( \{ M \} ) f' } \ar[r] & 0 
}
\end{align*}
is commutative.  By Corollary~5.6 of \cite{segrer:ccfis}, $\busby_{ \mathfrak{e}_{\A} } \circ \iota_{ \A , X_{i} }$ and $\busby_{ \mathfrak{e}_{\B} } \circ \iota_{ \B , X_{i} }$ are full extensions for each $i=1,2,\ldots,n$ with $X_i$ being \Oia (i.e., $X_i\subseteq\Xpi$).  Thus, by Lemma~\ref{l:directsumfull}, $\busby_{ \mathfrak{e}_{\A} } \circ \iota_{ \A , \Xpi }$ and $\busby_{ \mathfrak{e}_{\B} } \circ \iota_{ \B , \Xpi }$ are full extensions since $\A ( \Xpi ) = \bigoplus_{i\in\{1,2,\ldots,n\}, X_i\subseteq\Xpi} \A(  X_{i} )$ and $\B ( \Xpi ) = \bigoplus_{i\in\{1,2,\ldots,n\}, X_i\subseteq\Xpi} \B(  X_{i} )$.  Hence, $\overline{\beta}_{ \{ M \} } \circ \busby_{ \mathfrak{e}_{\A} } \circ \iota_{ \A , \Xpi }$ and $\busby_{ \mathfrak{e}_{\B} } \circ \iota_{ \B , \Xpi } \circ \psi$ are full extensions.  Thus, $\overline{ \beta }_{ \{ M \} }  \circ \busby_{ \mathfrak{e}_{\A} } \circ \iota_{\A, \Xpi } ( p_{1} )$ is a norm-full projection in $\corona{ \B ( \{M\} ) }$.  Since $\overline{ \beta }_{ \{ M \} }  \circ \busby_{ \mathfrak{e}_{\A} } \circ \iota_{\A, \Xpi } ( p_{1} ) \leq f$, we have that $f$ is a norm-full projection in $\corona{ \B ( \{ M \} ) }$.  By Lemma~3.3 of \cite{segrer:okfe}, we have that $f'$ is a norm-full projection in $\multialg{ \B ( \{ M \} ) }$ since $\B ( \{ M \} )$ has an approximate identity consisting of projections.  Since $\B ( \{ M \} )$ is an \AFa, by Lemma~3.10 of \cite{segrer:cecc}, $\B (\{ M \})$ has the corona factorization property.  Thus, $f'$ is Murray-von Neumann equivalent to $1_{\multialg{ \B ( \{ M \} )  }}$.  Thus, $f' \B ( \{M\} )  f' \cong \B ( \{ M \} )$ which implies that $f' \B ( \{M\} )  f'$ is a stable \Ca since $\B ( \{ M \} )$ is a stable \Ca.
 
Let $\iota$ be the embedding of $f' \B ( \{M\} )  f'$ into $\B(\{M\})$, $\widetilde{\iota}$ be the embedding of $f' \multialg{ \B ( \{M\} ) }f'$ into $\multialg{\B( \{ M \} ) }$, and $\overline{\iota}$ be the embedding of $f \corona{ \B ( \{M\} ) } f$ into $\corona{ \B ( \{M\} ) }$.  Note that the following diagram 
\begin{align*}
\xymatrix{
0 \ar[r] & f' \B ( \{M\} ) f' \ar[r] \ar[d]^{ \iota } & f' \multialg{ \B ( \{ M \} ) } f' \ar[r] \ar[d]^{ \widetilde{\iota} } & f \corona{ \B ( \{ M \} ) } f \ar[r] \ar[d]^{ \overline{\iota} } & 0 \\
0 \ar[r] & \B ( \{ M \} ) \ar[r] &  \multialg{ \B ( \{ M \} ) }  \ar[r] & \corona{ \B ( \{ M \} ) }  \ar[r] & 0 
}
\end{align*} 
is commutative.  Note that the range of $\busby_{ \mathfrak{e}_{\B } } \circ \iota_{\B, \Xpi } \circ \psi \circ j_{\pisy}$ and the range of $\overline{ \beta}_{ \{ M \} } \circ \busby_{ \mathfrak{e}_{ \A }  } \circ \iota_{\A, \Xpi } \circ j_{\pisy }$ are contained in $f \corona{ \B ( \{ M \} ) } f $.  Let $\mathfrak{e}_{1}$ be the extension defined by $\overline{ \gamma } \circ \overline{ \iota}^{-1} \circ \overline{ \beta}_{ \{ M \} } \circ \busby_{ \mathfrak{e}_{ \A }  } \circ \iota_{\A, \Xpi } \circ j_{\pisy }$ and let $\mathfrak{e}_{2}$ be the extension defined by $\overline{\gamma} \circ \overline{\iota}^{-1} \circ \busby_{ \mathfrak{e}_{\B } } \circ \iota_{\B, \Xpi } \circ \psi \circ j_{\pisy}$.  Then 
\begin{align*}
\overline{ \iota } \circ \overline{ \gamma }^{-1} \circ \busby_{ \mathfrak{e}_{1} } = \overline{ \beta}_{ \{ M \} } \circ \busby_{ \mathfrak{e}_{ \A }  } \circ \iota_{\A, \Xpi } \circ j_{\pisy }
\quad \text{and} \quad
\overline{ \iota } \circ \overline{ \gamma }^{-1} \circ \busby_{ \mathfrak{e}_{2} } = \busby_{ \mathfrak{e}_{\B } } \circ \iota_{\B, \Xpi } \circ \psi \circ j_{\pisy}
\end{align*}

Since $\busby_{ \mathfrak{e}_{\A} } ( p_{1} + p_{2} ) \neq 1_{ \corona{ \A ( \{ M \} )  } }$ and $\busby_{ \mathfrak{e}_{\B} } ( e_{1} + e_{2} ) \neq 1_{ \corona{ \B ( \{ M \} )  } }$ and since $\overline{ \beta }_{ \{ M \} }$ and $\psi$ are isomorphisms, we have that $\overline{ \beta}_{ \{ M \} } \circ \busby_{ \mathfrak{e}_{ \A } } \circ \iota_{\A, \Xpi} \circ j_{ \pisy } ( p_{1} ) \neq f$ and $\busby_{ \mathfrak{ e }_{ \B  } } \circ \iota_{ \B , \Xpi } \circ \psi \circ  j_{ \pisy} ( p_{1}  ) \neq f$.  Thus, $\busby_{ \mathfrak{e}_{1} } ( p_{1} )$ and $\busby_{ \mathfrak{e}_{2} } (p_{1})$ are not equal to $1_{ \corona{ f' \B ( \{ M \} ) f' }  }$.  Therefore, $\mathfrak{e}_{1}$ and $\mathfrak{e}_{2}$ are non-unital full extensions.  Since 
\begin{align*}
[ \overline{\beta}_{ \{ M \} } \circ \busby_{ \mathfrak{e}_{\A} } \circ \iota_{ \A , \Xpi } ] = [ \busby_{ \mathfrak{e}_{\B} } \circ \iota_{ \B , \Xpi } \circ \psi ]
\end{align*}
in $\kk^{1} ( \A ( \Xpi) , \B ( \{M \} ) )$, since $\iota$ induces an
element in $\kk( f' \B ( \{ M \} ) f' , \B ( \{ M \} ) )$ which is invertible, and since $\overline{\gamma}$ is an isomorphism, we have that $[\busby_{ \mathfrak{e}_{1} } ] = [ \busby_{ \mathfrak{e}_{2} }  ]$ in $\kk^{1} ( p_{1}  \A ( \Xpi ) p_{1} , f'  \B ( \{ M \} ) f' )$.  Since $f'  \B ( \{ M \} )f' \cong \B ( \{ M \} )$, we have that $f'  \B ( \{ M \} ) f'$ has the corona factorization property.  Thus, there exists a unitary $u'$ in $\multialg{ f' \B ( \{ M \} ) f'} $ such that 
\begin{align*}
\mathrm{Ad} ( \overline{u'} ) \circ \busby_{ \mathfrak{e}_{1} } = \busby_{ \mathfrak{e}_{2} }, 
\end{align*}
where $\overline{u'}$ is the image of $u'$ in $\corona{ f' \B ( \{ M \} ) f' }$.  Let $u = \widetilde{\iota} \circ \gamma^{-1} (u')$.  Then $u$ is a partial isometry in $\multialg{ \B ( \{ M \} ) }$ such that $u^{*}u = f' = u u^{*}$ and 
\begin{equation*}
\mathrm{Ad} ( \overline{u} ) \circ \overline{ \beta}_{ \{ M \} } \circ \busby_{ \mathfrak{e}_{ \A }  } \circ \iota_{\A, \Xpi } \circ j_{\pisy}  =  \busby_{ \mathfrak{e}_{\B } } \circ \iota_{\B, \Xpi } \circ \psi \circ j_{\pisy} 
\end{equation*}
where $\overline{u}$ is the image of $u$ in $\corona{ \B ( \{ M \} ) }$.  Set $v = u + 1_{ \multialg{ \B ( \{ M \} ) } } - f'$ and let $\overline{v}$ be the image of $v$ in $\corona{ \B ( \{ M \} ) }$.  Note that $\overline{v} = \overline{u} + e$ and
\begin{align*}
\mathrm{Ad} ( \overline{v} ) \circ \overline{ \beta }_{ \{ M \} } \circ \busby_{ \mathfrak{e}_{ \A } } \circ \iota_{ \A, \Xaf\setminus\{M\} } \circ j_{\afsy} =  \overline{ \beta}_{ \{ M \} } \circ \busby_{ \mathfrak{e}_{  \A } } \circ \iota_{\A, \Xaf\setminus\{M\} } \circ j_{ \afsy} \\
\mathrm{Ad} ( \overline{v} ) \circ \overline{ \beta }_{ \{ M \} } \circ \busby_{ \mathfrak{e}_{ \A } } \circ \iota_{\A, \Xpi } \circ j_{ \pisy }  =  \busby_{ \mathfrak{e}_{\B } } \circ \iota_{\B, \Xpi } \circ \psi \circ j_{ \pisy }.  
 \end{align*}

Let $a_{1} \in p_{1} \A ( \Xpi) p_{1}$ and $a_{2} \in p_{2} \A ( \Xaf\setminus\{M\} ) p_{2}$.  Then   
\begin{align*}
&\overline{v} \left(  \overline{ \beta }_{ \{ M \} } \circ \busby_{ \mathfrak{e}_{\A} } \circ \iota_{\A, \Xpi } \circ j_{\pisy }( a_{1} ) + \overline{ \beta }_{ \{ M \} } \circ \busby_{ \mathfrak{e}_{\A} } \circ \iota_{\A, \Xaf\setminus\{M\} } \circ j_{ \afsy}( a_{2} )  \right)\overline{v}^{*} \\
&\qquad = \busby_{ \mathfrak{e}_{\B } } \circ \iota_{\B, \Xpi }\circ \psi   \circ j_{ \pisy } ( a_{1} ) + \overline{ \beta }_{ \{ M \} } \circ \busby_{ \mathfrak{e}_{\A} } \circ \iota_{\A, \Xaf\setminus\{M\} } \circ j_{ \afsy}( a_{2} ) \\
&\qquad =  \busby_{ \mathfrak{e}_{ \B } } \circ \iota_{\B, \Xpi } \circ \psi \circ j_{ \pisy }  ( a_{1} ) + \busby_{ \mathfrak{e}_{\B} } \circ \iota_{\B, \Xaf\setminus\{M\} } \circ \beta_{ \Xaf\setminus\{M\} } \circ j_{ \afsy} ( a_{2} ) \\
&\qquad = \busby_{ \mathfrak{e}_{\B} } \circ ( \psi \circ j_{\pisy }(a_{1}) + \beta_{ \Xaf\setminus\{M\} } \circ j_{ \afsy } ( a_{2} ) ). 
\end{align*} 
 Hence, 
\begin{equation}\label{eq}
\mathrm{Ad} ( \overline{v} ) \circ \overline{ \beta }_{ \{ M \} } \circ \busby_{ \mathfrak{e}_{\A} }  \circ ( \iota_{ \A, \Xpi } \circ j_{ \pisy } + \iota_{\A, \Xaf\setminus\{M\} } \circ j_{ \afsy} ) = \busby_{ \mathfrak{e}_{\B} } \circ ( \psi \circ j_{ \pisy } + \beta_{ \Xaf\setminus\{M\} } \circ j_{ \afsy } ). 
 \end{equation}
 
Note that the Busby invariant of the extension
\begin{equation*}
0 \to \A ( \{M\} )  \to \mathcal{E}_{1} \to ( p_{1} + p_{2} ) \left( \A ( \Xpi )  \oplus \A ( \Xaf\setminus\{M\} ) \right) ( p_{1} + p_{2} ) \to 0
\end{equation*}
is given by $\busby_{ \mathfrak{e}_{\A} }  \circ ( \iota_{ \A, \Xpi } \circ j_{ \pisy } + \iota_{ \A, \Xaf\setminus\{M\} } \circ j_{ \afsy} )$ and the Busby invariant of the extension
\begin{equation*}
0 \to \B ( \{ M \} )  \to \mathcal{E}_{2} \to ( e_{1} + e_{2} ) \left( \B ( \Xpi)  \oplus \B ( \Xaf\setminus\{M\} )  \right) ( e_{1} + e_{2} ) \to 0
\end{equation*}
is given by $\busby_{ \mathfrak{e}_{\B} } \circ ( \kappa_{\pisy} + \kappa_{ \afsy } )$, where $\ftn{ \kappa_{\pisy} }{ e_{1}  \B ( \Xpi ) e_{1} }{  \B ( \Xpi) }$ and $\ftn{ \kappa_{\afsy} }{ e_{2}  \B ( \Xaf\setminus\{M\}) e_{2} }{  \B ( \Xaf\setminus\{M\}) }$ are the natural embeddings.  Hence, by Equation (\ref{eq}), Theorem 2.2 of \cite{elp:mecpfbi}, and the five lemma, $\mathcal{E}_{1} \cong \mathcal{E}_{2}$.  By Lemma \ref{l:heralg}, $\mathcal{E}_{1}$ is isomorphic to a full hereditary sub-\Ca of $\A$ and $\mathcal{E}_{2}$ is isomorphic to a full hereditary sub-\Ca of $\B$.  We have just proved the claim.  

If $\Xpi=\emptyset$ the result is due to Elliott's classification result \cite{gae:cilssfa}, and if $\Xaf\setminus\{M\}=\emptyset$ the theorem follows easily by making modifications to the above proof. 
\end{proof}

\begin{remark}\label{r:class2}
Let \A and \B be graph \Cas satisfying Condition (K) that are \Cas over $X$ such that each of $\A(X_i),\B(X_i)$ are either \AFas or \Oia and such that $\A(X_i)$ and $\B(X_i)$ are tight \Cas over $X_i$, whenever $\A( X_{i} )$ and $\B( X_{i} )$ are \Oia.  Assume that there exists an isomorphism $\ftn{ \alpha }{ \FKplus{X}{\A} }{ \FKplus{X}{\B} }$. 
Assume moreover, that $\A ( \{ M \} )$ is an \AFa and that for every ideal $\IdealI$ of $\A$, we have that $\IdealI \subseteq \A ( \{ M \} )$ or $\A ( \{ M \} ) \subseteq \IdealI$.  Then $\A \otimes \K \cong \B \otimes \K$. 
This follows from the proof above together with Corollary~5.6 of \cite{segrer:ccfis} and applies to the cases\footnote{Here we specify how we view the algebras as algebras over $a\rightarrow b\leftarrow c$ by providing a continuous map from the primitive ideal space to $\{a,b,c\}$} 
\begin{enumerate}[(a)]
\item
\myref{4}{1E}{4} and \myref{4}{1E}{12}, 
where we view the algebra \A that is tight over the space \msp{4}{1E} as a \Ca over $a\rightarrow b \leftarrow c$ as indicated by the assignment 
\item
\myref{4}{1F}{4} and \myref{4}{1F}{12},
where we view the algebra \A that is tight over the space \msp{4}{1F} as a \Ca over $a\rightarrow b \leftarrow c$ as indicated by the assignment 
\end{enumerate}
\end{remark}

The following result resolves the cases \myref{3}{3}{2}, \myref{3}{3}{3}, \myref{4}{A}{1}, \myref{4}{A}{3}, \myref{4}{A}{7}. 

\begin{theorem}\label{t:class2-PI}
Let $\A$ and $\B$ be graph \Cas that are tight \Cas over $X$, with $X_i$ being a singleton, for each $i=1,2,\ldots,n$.  Suppose there exists an isomorphism $\ftn{ \alpha }{ \FKplus{X}{\A }}{ \FKplus{X}{\B }}$ such that $\alpha$ lifts to an invertible element in $\kk ( X ; \A , \B )$.  Then $\A \otimes \K \cong \B \otimes \K$.  
\end{theorem}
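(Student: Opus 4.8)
The plan is to run the same dichotomy-and-extension argument as in the companion fan-up result, Theorem~\ref{t:class1-PI}, but with the roles of ideal and quotient interchanged exactly as in Theorem~\ref{t:class2}. Since each $X_i$ is a singleton $\{x_i\}$, the algebra $\A(\{M\})$ is a \emph{simple} graph \Ca and is therefore either an \AFa or \Oia. If $\A(\{M\})$ is an \AFa, I would simply invoke Theorem~\ref{t:class2}: its hypotheses hold because every $\A(X_i)$ is simple (hence trivially an \AFa or \Oia), and the conclusion $\A\otimes\K\cong\B\otimes\K$ follows \emph{without} using the $\kk(X;\A,\B)$-lift at all. So the genuine content lies in the case where $\A(\{M\})$ is \Oia.

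In that case I would, after reducing to stable $\A$ and $\B$, study the canonical extension
\[
\mathfrak{e}_\A \ : \ 0 \to \A(\{M\}) \to \A \to \textstyle\bigoplus_{i=1}^n \A(\{x_i\}) \to 0
\]
together with its counterpart $\mathfrak{e}_\B$, the aim being to prove that $\busby_{\mathfrak{e}_\A}$ and $\busby_{\mathfrak{e}_\B}$ are full. For each $i$, the commuting-diagram argument of Lemma~\ref{l:pullback} identifies the restriction $\busby_{\mathfrak{e}_\A}\circ\iota_{\A,\{x_i\}}$ (where $\iota_{\A,\{x_i\}}$ is the inclusion into the $i$-th summand) with the Busby map of the length-two extension $0\to\A(\{M\})\to\A(\{x_i,M\})\to\A(\{x_i\})\to 0$. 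Since $\{x_i,M\}$ is a two-element chain, $\A(\{x_i,M\})$ is stably a graph \Ca whose only nontrivial ideal is $\A(\{M\})$, and as that ideal is \Oia the algebra is not an \AFa; hence Proposition~\ref{p:full} renders this extension full. Feeding these fullness statements into Lemma~\ref{l:directsumfull} yields that $\busby_{\mathfrak{e}_\A}$ is full, and symmetrically that $\busby_{\mathfrak{e}_\B}$ is full.

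With fullness secured I would conclude exactly as in Theorem~\ref{t:class1-PI}, through the results of \cite{segrer:ccfis}. The hypothesised invertible class in $\kk(X;\A,\B)$ restricts to an isomorphism of the purely infinite ideals $\A(\{M\})\cong\B(\{M\})$ (via Kirchberg) and to isomorphisms of the simple quotient summands, and, crucially, forces equality of the two Busby invariants in $\kk^{1}(\bigoplus_i\A(\{x_i\}),\B(\{M\}))$ after these identifications. Since $\A(\{M\})$ is stable and has the corona factorization property (Lemma~\ref{cfpbasics}) and the extensions are non-unital and full, the $\kk^{1}$-discrepancy is absorbed by a unitary in $\multialg{\B(\{M\})}$, producing an isomorphism of extensions and hence $\A\cong\B$. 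The degenerate sub-cases, in which all $\A(\{x_i\})$ are \AFas or all are \Oia, go through with only cosmetic changes.

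I expect the decisive point to be conceptual rather than computational: the reason the $\kk(X;\A,\B)$-hypothesis is needed here, whereas Theorem~\ref{t:class2} needed only $\FKplus{X}{-}$, is that the ideal $\A(\{M\})$ is now purely infinite, so filtered, ordered $K$-theory does not by itself pin down the $\kk_X$-class over this fan-down space --- precisely the regime where a general UCT can fail. The lift therefore stands in for the missing UCT and is exactly what lets Kirchberg's theorem apply. The step demanding the most care is verifying that the fullness established above is compatible with the $\kk^{1}$-identification coming from the lift, so that the corona-factorization absorption of \cite{segrer:ccfis} applies verbatim.
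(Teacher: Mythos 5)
Your proposal is correct and follows essentially the same route as the paper: the paper also disposes of the case where $\A(\{M\})$ is an \AFa by invoking Theorem~\ref{t:class2}, and in the \Oia case establishes fullness of the canonical extensions $0\to\A(\{M\})\to\A\to\A(X\setminus\{M\})\to 0$ (and its counterpart for $\B$) via Proposition~\ref{p:full} applied coordinatewise together with Lemma~\ref{l:directsumfull}, then concludes by citing Theorem~4.6 of \cite{segrer:ccfis}. Your explicit unpacking of that citation --- the restriction of the $\kk(X;\A,\B)$-class to ideal and quotient, the $\kk^1$-identification of Busby invariants, and the corona-factorization absorption --- is simply the inner mechanism of the cited theorem, so the two arguments coincide.
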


\begin{proof}
Note that we may assume that $\A$ and $\B$ are stable \Cas.  If $\A ( \{ M \} )$ is an \AFa, then the theorem follows from Theorem \ref{t:class2}.  Suppose $\A ( \{ M \} )$ is \Oia.  Then $\B ( \{ M \} )$ is \Oia.  Hence, by Proposition~\ref{p:full} and Lemma~\ref{l:directsumfull}, the extensions
\begin{align*}
0 \to \A ( \{ M \} ) \to \A  \to \A ( X \setminus \{ M \} ) \to 0, \\
0 \to \B ( \{ M \} ) \to \B  \to \B ( X \setminus \{ M \} ) \to 0
\end{align*}
are full extensions.  
The theorem now follows from the results of Theorem~4.6 of \cite{segrer:ccfis}.
\end{proof}

\section{A pullback technique}

The main idea of this section is to write the algebra as a pullback of extensions we can classify coherently. 
The problem is, that classification usually does not give us unique isomorphisms on the algebra level. 
But when the quotient is an \AFa we can in certain cases use that the \kk-class of the isomorphism is unique. 
The main idea here is similar to the main idea of Section~\ref{sec:fanspaces}. 

\begin{lemma}\label{l:lemma-isomorporchic-pullbacks}
For each $i=1,2$, let there be given \Cas $\A_i$, $\B_i$, and $\mathfrak{C}_i$ together with \shoms $\ftn{\alpha_i}{\A_i}{\mathfrak{C}_i}$ and  $\ftn{\beta_i}{\B_i}{\mathfrak{C}_i}$. Let $\mathfrak{P}_i$ denote the pullback of $\A_i$ and $\B_i$ along $\alpha_i$ and $\beta_i$, for each $i=1,2$. 

Assume that there are isomorphisms $\ftn{\varphi_\A}{\A_1}{\A_2}$,  $\ftn{\varphi_\B}{\B_1}{\B_2}$ and  $\ftn{\varphi_\mathfrak{C}}{\mathfrak{C}_1}{\mathfrak{C}_2}$, such that 
the following diagram commutes: 
$$\xymatrix{ \A_1\ar[d]^{\varphi_\A}\ar[r]^{\alpha_1} & \mathfrak{C}_1\ar[d]^{\varphi_{\mathfrak{C}}} & \B_1 \ar[d]^{\varphi_\B}\ar[l]_{\beta_1} \\
\A_2\ar[r]^{\alpha_2} & \mathfrak{C}_2 & \B_2. \ar[l]_{\beta_2} }$$
Then we get a canonically induced isomorphism from $\mathfrak{P}_1$ to $\mathfrak{P}_2$. 
\end{lemma}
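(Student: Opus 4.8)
The plan is to use the concrete description of the pullback as a fibered product together with its universal property, which will also account for the word \emph{canonically}. Recall that each $\mathfrak{P}_i$ may be realised as the \Csa of $\A_i\oplus\B_i$ consisting of those pairs $(a,b)$ with $\alpha_i(a)=\beta_i(b)$, and that it comes equipped with the coordinate \shoms $\ftn{p^i_\A}{\mathfrak{P}_i}{\A_i}$ and $\ftn{p^i_\B}{\mathfrak{P}_i}{\B_i}$ satisfying $\alpha_i\circ p^i_\A=\beta_i\circ p^i_\B$.

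First I would produce the forward map. Consider the two \shoms $\varphi_\A\circ p^1_\A\colon\mathfrak{P}_1\to\A_2$ and $\varphi_\B\circ p^1_\B\colon\mathfrak{P}_1\to\B_2$. Using the commutativity of the two squares of the given diagram, for every $w\in\mathfrak{P}_1$ one has $\alpha_2(\varphi_\A(p^1_\A(w)))=\varphi_\mathfrak{C}(\alpha_1(p^1_\A(w)))=\varphi_\mathfrak{C}(\beta_1(p^1_\B(w)))=\beta_2(\varphi_\B(p^1_\B(w)))$, where the middle equality uses that $w$ lies in the pullback $\mathfrak{P}_1$. Hence the pair $(\varphi_\A\circ p^1_\A,\varphi_\B\circ p^1_\B)$ is compatible with $\alpha_2$ and $\beta_2$, so by the universal property of $\mathfrak{P}_2$ there is a unique \shom $\ftn{\Phi}{\mathfrak{P}_1}{\mathfrak{P}_2}$ with $p^2_\A\circ\Phi=\varphi_\A\circ p^1_\A$ and $p^2_\B\circ\Phi=\varphi_\B\circ p^1_\B$; concretely $\Phi(a,b)=(\varphi_\A(a),\varphi_\B(b))$.

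Finally I would establish that $\Phi$ is an isomorphism. Since $\varphi_\A,\varphi_\B,\varphi_\mathfrak{C}$ are isomorphisms, applying their inverses yields a diagram of the same shape which again commutes, so the identical construction produces a \shom $\ftn{\Psi}{\mathfrak{P}_2}{\mathfrak{P}_1}$ with $p^1_\A\circ\Psi=\varphi_\A^{-1}\circ p^2_\A$ and $p^1_\B\circ\Psi=\varphi_\B^{-1}\circ p^2_\B$. The composites $\Psi\circ\Phi$ and $\Phi\circ\Psi$ then agree coordinatewise with the respective identity maps, so by the uniqueness clause of the universal property applied to $\mathfrak{P}_1$ and $\mathfrak{P}_2$ one gets $\Psi\circ\Phi=\id_{\mathfrak{P}_1}$ and $\Phi\circ\Psi=\id_{\mathfrak{P}_2}$, giving the desired isomorphism.

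There is no genuine obstacle here: the content is entirely the universal property of the pullback, and the only thing requiring verification is the compatibility identity displayed in the second paragraph, which is immediate from the hypothesis that the square commutes. The map $\Phi$ is canonical precisely because the universal property pins it down uniquely, so it depends on no choices.
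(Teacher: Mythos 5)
Your proof is correct and follows essentially the same route as the paper, whose entire proof is that both the existence of the \shom $\mathfrak{P}_1\to\mathfrak{P}_2$ and the fact that it is an isomorphism follow from the universal property of the pullback. You have simply written out in full the details (the compatibility check, the inverse map from the inverted diagram, and the uniqueness argument) that the paper leaves implicit.
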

\begin{proof}
The existence of the \shom from $\mathfrak{P}_1$ to $\mathfrak{P}_2$ follows from the universal property of the pullback. 
That this \shom is an isomorphism also follows from the universal property.
\end{proof}

\begin{lemma}\label{l:lemma-orthogonal-ideals}
Let $\IdealI$ and $\IdealJ$ be ideals of a \Ca \A satisfying $\IdealI\cap\IdealJ=0$. 
Then $\A$ is the pullback of $\A/\IdealJ$ and $\A/\IdealI$ along the quotient maps $\A/\IdealJ\to\A/(\IdealI+\IdealJ)$ and $\A/\IdealI\to\A/(\IdealI+\IdealJ)$. 
\end{lemma}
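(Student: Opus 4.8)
The plan is to realise the pullback explicitly and then exhibit the canonical comparison map out of \A, reducing everything to checking that this map is bijective. Writing $q_\IdealJ\colon \A/\IdealJ\to\A/(\IdealI+\IdealJ)$ and $q_\IdealI\colon \A/\IdealI\to\A/(\IdealI+\IdealJ)$ for the two quotient maps, the pullback is
\[
\mathfrak{P}=\setof{(a+\IdealJ,\,b+\IdealI)\in(\A/\IdealJ)\oplus(\A/\IdealI)}{a-b\in\IdealI+\IdealJ}.
\]
The quotient \shoms $\A\to\A/\IdealJ$ and $\A\to\A/\IdealI$ become equal after composing into $\A/(\IdealI+\IdealJ)$, so the universal property of the pullback supplies a canonical \shom $\ftn{\Phi}{\A}{\mathfrak{P}}$, concretely $\Phi(x)=(x+\IdealJ,\,x+\IdealI)$. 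Since a bijective \shom between \Cas is automatically isometric, it will suffice to prove that $\Phi$ is injective and surjective.

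First I would dispatch injectivity, which is the only place the orthogonality hypothesis is used: if $\Phi(x)=0$ then $x\in\IdealJ$ and $x\in\IdealI$, hence $x\in\IdealI\cap\IdealJ=0$. For surjectivity, given an element $(a+\IdealJ,\,b+\IdealI)\in\mathfrak{P}$, the defining relation $a-b\in\IdealI+\IdealJ$ lets me write $a-b=i+j$ with $i\in\IdealI$ and $j\in\IdealJ$; then $x=a-j\;(=b+i)$ satisfies $x-a=-j\in\IdealJ$ and $x-b=i\in\IdealI$, so $\Phi(x)=(a+\IdealJ,\,b+\IdealI)$. Here I would invoke the standard fact that the sum of two closed ideals in a \Ca is closed, so that $\IdealI+\IdealJ$ coincides with the algebraic sum $\setof{i+j}{i\in\IdealI,\ j\in\IdealJ}$ and no approximation is needed to obtain the decomposition.

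I do not expect any serious obstacle; the argument is elementary. The one point deserving a word of care is the surjectivity step, where the single element $x=a-j$ must simultaneously match both coordinates, and this works precisely because $\IdealI+\IdealJ$ is (already) the set of sums. It is worth noting that the hypothesis $\IdealI\cap\IdealJ=0$ is confined to injectivity: without it, $\Phi$ would still be a surjective \shom onto $\mathfrak{P}$ with kernel $\IdealI\cap\IdealJ$, so orthogonality is exactly what upgrades the comparison map to an isomorphism.
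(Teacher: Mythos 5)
Your proof is correct, and it takes a genuinely different (more elementary) route than the paper. The paper does not verify bijectivity by hand: it simply observes that the two short exact sequences $0\to\IdealI\to\A\to\A/\IdealI\to 0$ and $0\to\IdealI\to\A/\IdealJ\to\A/(\IdealI+\IdealJ)\to 0$ fit into a commuting diagram whose left vertical map is the identity on $\IdealI$, and then invokes Proposition~3.1 of \cite{gkp:pullpush}, a general criterion recognizing the upper middle term of such a diagram as a pullback. Note that the paper's appeal to this criterion quietly uses the same two ingredients you make explicit: orthogonality $\IdealI\cap\IdealJ=0$ is what makes $\IdealI\to\A/\IdealJ$, $i\mapsto i+\IdealJ$, injective, and the closedness of the algebraic sum $\IdealI+\IdealJ$ is what identifies its image with the kernel $(\IdealI+\IdealJ)/\IdealJ$ of $\A/\IdealJ\to\A/(\IdealI+\IdealJ)$ --- precisely your injectivity and surjectivity steps, repackaged as exactness of the second row. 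Your version buys self-containedness (no external pullback machinery), a precise localisation of where each hypothesis is used, and the correct extra observation that without orthogonality the comparison map $\Phi$ is still surjective with kernel $\IdealI\cap\IdealJ$; the paper's version buys brevity and situates the lemma within standard $C^*$-pullback theory, which is consonant with how Lemma~\ref{l:lemma-isomorporchic-pullbacks} is then applied via universal properties in Theorem~\ref{t:pullback-technique}. Both arguments are complete; yours is a legitimate alternative.
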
 
\begin{proof}
This follows from Proposition~3.1 of \cite{gkp:pullpush} by noting that we have a commuting diagram 
$$\xymatrix{ \IdealI\ar@{=}[d]\ar[r] & \A\ar[d]\ar[r] & \A/\IdealI\ar[d] \\
\IdealI\ar[r] & \A/ \IdealJ \ar[r] & \A/(\IdealI+\IdealJ) }$$ 
with short exact rows. 
\end{proof}

%
%
%
%
%
%
%
%
%
%
%
%
%
%
%
%
%

The signatures \myref{4}{E}{4} and \myref{4}{E}{5} are covered by the following theorem. 

\begin{theorem} \label{t:pullback-technique}
Let \A and \B be graph \Cas that are tight over $X$, where $X$ is some finite $T_0$ space. 
Assume that there exists an isomorphism $\ftn{ \alpha }{ \FKplus{X}{\A} }{ \FKplus{X}{\B} }$. 
Assume, moreover, that we have disjoint open subsets $O_0$ and $O_1$ of $X$. 
Let 
$$Y_0=X\setminus O_1, \qquad Y_1=X\setminus O_0,\qquad\text{and}\qquad Z=X\setminus (O_0\cup O_1).$$
Assume also $Z\neq\emptyset$ and that $\A(Z)$ is an \AFa. 

For each $i=0,1$, if $\A(O_i)$ is \Oia, then we assume that:
\begin{enumerate}[(a)] 
\item 
There exist two disjoint clopen subsets $Y_i^1$ and $Y_i^2$ of $Y_i$ (with the subspace topology) such that $Y_i=Y_i^1\cup Y_i^2$ and $O_i\subseteq Y_i^1$. 
\item
The ideal lattice of $\A(O_i)$ is linear, i.e., $O_i\cong \X_j$ for some $j$.
\item
$\A(O_i)$ is an essential ideal of $\A(Y_i^1)$
\item 
$\A(\{m_i\})$ is essential in $\A(\{m_i\}\cup (Y_i^1\setminus O_i))$, 
where $m_i$ is the least element of $O_i$.  
\end{enumerate}

Then $\A \otimes \K \cong \B \otimes \K$. 
\end{theorem}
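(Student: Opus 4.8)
The plan is to exhibit \A and \B as pullbacks over their AF subquotients $\A(Z)$, $\B(Z)$ and then to glue together compatible isomorphisms of the three pieces. Since $O_0$ and $O_1$ are disjoint open subsets of $X$, the associated ideals satisfy $\A(O_0)\cap\A(O_1)=\A(O_0\cap O_1)=0$ and $\A(O_0)+\A(O_1)=\A(O_0\cup O_1)$, so \lref{l:lemma-orthogonal-ideals} presents \A as the pullback of the quotients $\A(Y_0)=\A/\A(O_1)$ and $\A(Y_1)=\A/\A(O_0)$ along their canonical surjections onto $\A(Z)=\A/(\A(O_0)+\A(O_1))$; the same holds for \B. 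After replacing \A and \B by $\A\otimes\K$ and $\B\otimes\K$, \lref{l:lemma-isomorporchic-pullbacks} reduces the theorem to producing isomorphisms $\ftn{\varphi_{Y_0}}{\A(Y_0)}{\B(Y_0)}$, $\ftn{\varphi_{Y_1}}{\A(Y_1)}{\B(Y_1)}$ and $\ftn{\varphi_Z}{\A(Z)}{\B(Z)}$ that intertwine the two pairs of quotient maps. The whole difficulty lies in forcing $\varphi_{Y_0}$ and $\varphi_{Y_1}$ to induce the \emph{same} map on $\A(Z)$.

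I would first fix the map on $Z$. Because $\A(Z)$ and $\B(Z)$ are \AFas and $\alpha_Z$ is a positive isomorphism of $K_0$-groups, the equivariant form of Elliott's theorem (\tref{AF}, used as in the proof of \tref{t:class1}) yields a $Z$-equivariant isomorphism $\varphi_Z$ with $K_0(\varphi_Z)=\alpha_Z$; it induces an isomorphism $\ftn{\gamma_Y}{\A(Y)}{\B(Y)}$ on every subquotient with $Y\subseteq Z$. The decisive feature — and the reason the hypothesis that $\A(Z)$ be an \AFa is indispensable — is that all subquotients occurring here have finitely generated $K$-theory, so the AF groups $K_0$ are free and $K_1$ vanishes; by the UCT, $\kk(\varphi_Z)$ is then the \emph{unique} \kk-class lifting $\alpha_Z$, and likewise for each $\gamma_Y$. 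This uniqueness is exactly what will let the two lifts be pinned to the same $\varphi_Z$.

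Next I would lift $\varphi_Z$ through each prong $i=0,1$, always keeping the induced quotient map equal to $\varphi_Z$. If $\A(O_i)$ is an \AFa, then so is $\A(Y_i)$, and the equivariant form of \tref{AF} realizing $\alpha_{Y_i}$ produces a lift; since $\A(O_i)$ also has the corona factorization property by \lref{cfpbasics}, one arranges by absorbing a \kk-trivial (approximately inner) automorphism that the induced quotient map agrees with $\varphi_Z$, this AF case being the routine one. If $\A(O_i)$ is \Oia, hypothesis (a) gives $\A(Y_i)=\A(Y_i^1)\oplus\A(Y_i^2)$ with $Y_i^2\subseteq Z$, so that $\A(Y_i^2)$ is an \AFa handled as above, while $\A(Y_i^1)$ is an extension $\mathfrak{e}^\A_i:0\to\A(O_i)\to\A(Y_i^1)\to\A(Y_i^1\setminus O_i)\to0$ of the \AFa $\A(Y_i^1\setminus O_i)$ by the \Oia algebra $\A(O_i)$, whose ideal lattice is linear by (b). Conditions (c) and (d), via \lref{l:essential} and Corollary~5.3 of \cite{segrer:ccfis}, make $\mathfrak{e}^\A_i$ and its \B-analogue $\mathfrak{e}^\B_i$ full and non-unital. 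Taking $\varphi_{O_i}$ from Kirchberg's theorem \cite{ek:nkmkna} realizing $\alpha_{O_i}$, I would then verify $[\busby_{\mathfrak{e}^\B_i}\circ\gamma_i]=[\overline{\varphi_{O_i}}\circ\busby_{\mathfrak{e}^\A_i}]$ in $\kk^1(\A(Y_i^1\setminus O_i),\B(O_i))$, where $\gamma_i$ is the restriction of $\varphi_Z$; this matching is where the uniqueness of $\kk(\gamma_i)$ together with the fact that $\alpha$ respects the boundary maps is used, exactly as in \tref{t:class1} and Theorem~3.3 of \cite{segrer:ccfis}. Since $\A(O_i)$ has the corona factorization property by \lref{cfpbasics}, the two full non-unital extensions differ by a unitary $u_i\in\multialg{\B(O_i)}$, giving $\Ad(\overline{u_i})\circ\overline{\varphi_{O_i}}\circ\busby_{\mathfrak{e}^\A_i}=\busby_{\mathfrak{e}^\B_i}\circ\gamma_i$; Theorem~2.2 of \cite{elp:mecpfbi} converts this into an isomorphism $\varphi_{Y_i^1}$ whose induced quotient map is \emph{precisely} $\gamma_i$, i.e.\ the restriction of $\varphi_Z$.

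Finally, with $\varphi_{Y_0}$, $\varphi_{Y_1}$ and $\varphi_Z$ all inducing the same maps on $\A(Z)$, \lref{l:lemma-isomorporchic-pullbacks} assembles them into an isomorphism $\A\otimes\K\cong\B\otimes\K$. I expect the main obstacle to be precisely the compatibility over $Z$: one must resist perturbing $\varphi_Z$ and instead push every unitary correction onto the ideal side of each prong, which is possible only because the AF quotient pins down the \kk-class of $\varphi_Z$ uniquely and the \Oia ideals carry the corona factorization property — the degenerate boundary cases ($\Xpi=\emptyset$, or a prong's AF or \Oia part absent) being, as in \tref{t:class1} and \tref{t:class2}, either immediate from \tref{AF} or handled by a routine modification of the argument.
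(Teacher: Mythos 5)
Your overall architecture---presenting $\A$ and $\B$ as pullbacks via \lref{l:lemma-orthogonal-ideals}, gluing with \lref{l:lemma-isomorporchic-pullbacks}, splitting $\A(Y_i)=\A(Y_i^1)\oplus\A(Y_i^2)$ by hypothesis (a), and pinning each \Oia prong to the AF data through fullness (from (b)--(d) and Corollary~5.3 of \cite{segrer:ccfis}), the corona factorization property, and Theorem~2.2 of \cite{elp:mecpfbi}---is exactly the paper's, and in the case where both $\A(O_0)$ and $\A(O_1)$ are \Oia your proof coincides with it. But in the mixed case there is a genuine gap: having fixed $\varphi_Z$ in advance, you must produce an isomorphism $\A(Y_0)\to\B(Y_0)$ of \AFas inducing \emph{exactly} $\varphi_Z$ on $\A(Z)$, and your mechanism---``absorbing a \kk-trivial (approximately inner) automorphism'', with the corona factorization property of $\A(O_0)$ supplying the correction---does not deliver this. \lref{l:lemma-isomorporchic-pullbacks} needs strict commutativity of the diagram; an automorphism of $\B(Z)$ acting trivially on $K_0$ is approximately inner but in general not inner, and approximately inner automorphisms of a quotient need not lift along $\B(Y_0)\to\B(Z)$ (a one-sided sequence of lifted unitary corrections has no reason to converge). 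Moreover, the unitary-absorption engine you invoke is simply unavailable on this prong: Corollary~5.3 of \cite{segrer:ccfis} and the results of \cite{gaedk:avbfat} and \cite{dkpwn:cfpaue} compare \emph{full} extensions, and nothing makes the Busby map of the AF-by-AF extension $0\to\A(O_0)\to\A(Y_0)\to\A(Z)\to0$ full---unlike on the \Oia side, where fullness is precisely what hypotheses (b)--(d) buy you.

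The repair is a reordering, and it is how the paper actually argues: in the mixed case do not fix $\varphi_Z$ first; apply Elliott's theorem once to the whole \AFa $\A(Y_0)$ to lift $\alpha_{Y_0}$ to an isomorphism $\beta$, and \emph{define} the map over $Z$ to be its restriction $\beta_Z$, so that no correction on the AF prong is ever needed. The uniqueness of \kk-lifts that you correctly identify (which, incidentally, needs neither finite generation nor freeness of $K_0$: for AF source and target the $\operatorname{Ext}$-term of the UCT vanishes simply because $K_1=0$ on both sides) then pins the \Oia prong to $\beta$ restricted to $Y_1^1\setminus O_1$, and fullness, the corona factorization property, and Theorem~2.2 of \cite{elp:mecpfbi} produce an isomorphism over $Y_1^1$ whose induced quotient map is exactly that restriction; adding $\beta_{Y_1^2}$ on the direct summand $\A(Y_1^2)$ gives exact compatibility over all of $Z$, and the two pullback lemmas finish. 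With this reordering your argument becomes the paper's proof.
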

\begin{proof}
We may assume that \A and \B are stable \Cas. 
Note that for each locally closed subset $Y$ of $X$, $\A(Y)$ is an \AFa if and only if $\B(Y)$ is an \AFa, and $\A(Y)$ is \Oia if and only if $\B(Y)$ is \Oia (since there exists a positive isomorphism from $K_0(\A(Y))$ to $K_0(\B(Y))$). 
Specifically $\B(X\setminus(O_0\cup O_1))$ is an \AFa. 

Note that the diagram 
$$\xymatrix{ 0\ar[r]\ar[d] & \A(O_1)\ar[d]\ar@{=}[r] & \A(O_1)\ar[d] \\
\A(O_0)\ar[r]\ar@{=}[d] & \A \ar[d]\ar[r] & \A(Y_1)\ar[d] \\
\A(O_0)\ar[r] & \A(Y_0)\ar[r] & \A(Z) 
}$$
is commutative with short exact rows and columns, analogously for $\B$. 

If both $\A(O_0)$ and $\A(O_1)$ are \AFas, then it follows from the permanence properties of \AFas that \A is an \AFa, and thus also \B. 
In this case the theorem follows from Elliott's classification result \cite{gae:cilssfa}. 

Now assume that $\A(O_0)$ is an \AFa and that $\A(O_1)$ is \Oia. 
Let $Z_1^1=Z\setminus Y_1^2$ and $Z_1^2=Y_1^2$. Then $Z_1^1$ and $Z_1^2$ are locally closed subsets of $X$, and $Z$ is the disjoint union of $Z_1^1$ and $Z_1^2$.
Since $\A(Y_0)$ and $\B(Y_0)$ are extensions of \AFas, these are themselves \AFas. 
Since $\ftn{\alpha_{Y_0}}{K_0(\A(Y_0))}{K_0(\B(Y_0))}$ is a positive isomorphism, there exists an isomorphism $\ftn{\beta}{\A(Y_0)}{\B(Y_0)}$ such that $K_0(\beta)=\alpha_{Y_0}$ (by Elliott's classification result \cite{gae:cilssfa}). Since $\A ( Y_0 )$ and $\B ( Y_0 )$ are \AFas and $\beta$ is an $Y_0$-equivariant isomorphism, we have that $K_{0} ( \beta_Y ) = \alpha_{ Y }$ for all $Y \in \mathbb{LC} ( X )$ such that $Y \subseteq Y_0$.  

Let 
\begin{equation*}
\mathfrak{e}^\A \ : \ 0 \to \A ( O_1 ) \to \A( Y_1^1 ) \to \A ( Z^1 ) \to 0.
\end{equation*}
and
\begin{equation*}
\mathfrak{e}^\B \ : \ 0 \to \B ( O_1 ) \to \B( Y_1^1 ) \to \B ( Z^1 ) \to 0.
\end{equation*}
Since  $\ftn{ \alpha }{ \FKplus{X}{\A} }{ \FKplus{X}{\B} }$ is an isomorphism, we also have an isomorphism  $\ftn{ \alpha_{Y_1^1} }{ \FKplus{Y_1^1}{\A} }{ \FKplus{Y_1^1}{\B} }$. 
So by Theorem~4.14 of of \cite{rmrn:ctsfk}, Kirchberg \cite{ek:nkmkna}, and Theorem~3.3 of \cite{segrer:ccfis}, there exists an isomorphism $\ftn{\varphi}{\A (O_1)}{\B (O_1)}$ 
such that $K_*(\varphi)=\alpha_{O_1}$, 
and
\begin{equation*}
\left[\busby_{\mathfrak{e}^\B}\circ\beta_{O_{Z^1}}\right]=\left[\overline{\varphi}\circ\busby_{\mathfrak{e}^\A}\right]
\end{equation*}
in $\kk^1( \A( Z^1 ),\B( O_1 ) )$, since $\kk(\beta_{Z^1})$ is the unique lifting of $\alpha_{Z^1}$. 

As in the proof of Proposition~6.3 of \cite{segrer:ccfis}, Corollary~5.3 of \cite{segrer:ccfis} implies that $\busby_{\mathfrak{e}^\A}$ and $\busby_{\mathfrak{e}^\B}$ are full extensions, and thus also the extensions with Busby maps $\busby_{\mathfrak{e}^\B}\circ\beta_{Z^1}$ and $\overline{\varphi}\circ\busby_{\mathfrak{e}^\A}$ are full. 
Since the extensions are non-unital and $\B( O_1 )$ satisfies the corona factorization property, there exists a unitary $u\in\multialg{\B( O_1 ) }$ such that 
\begin{equation*}
\busby_{\mathfrak{e}^\B}\circ\beta_{Z^1}=\Ad(\overline{u})\circ\overline{\varphi}\circ\busby_{\mathfrak{e}^\A} 
\end{equation*}
where $\overline{u}$ is the image of $u$ in the corona algebra (this follows from \cite{gaedk:avbfat} and \cite{dkpwn:cfpaue}). 
Hence, by Theorem 2.2 of \cite{elp:mecpfbi}, there exists an isomorphism $\ftn{\eta}{\A(Y_1^1)}{\B(Y_1^1)}$ such that $(\Ad(\overline{u})\circ\varphi,\eta,\beta_{Z_1^1})$ is an isomorphism from $\mathfrak{e}^\A$ to $\mathfrak{e}^\B$. 

Since the extension
$$0\to \A(O_1)\to\A(Y_1)\to\A(Z)\to 0$$
is the direct sum of the extensions
$$0\to\A(O_1)\to\A(Y_1^1)\to\A(Z^1)\to 0$$ 
and 
$$0\to 0\to \A(Z^2)\to\A(Z^2)\to 0$$
and analogously for $\B$, we get an isomorphism from 
$0\to \A(O_1)\to\A(Y_1)\to\A(Z)\to 0$ to $0\to \B(O_1)\to\B(Y_1)\to\B(Z)\to 0$, which is equal to $\beta_{Z}$ on the quotient. 
Now the theorem follows from Lemma~\ref{l:lemma-orthogonal-ideals} and Lemma~\ref{l:lemma-isomorporchic-pullbacks}. 

Now assume instead that both $\IdealI$ and $\IdealJ$ are \Oia. 
The proof is similar to the case above. 
Instead of lifting $\ftn{\alpha_{Y_0}}{K_0(\A(Y_0))}{K_0(\B(Y_0))}$ to $\ftn{\beta}{\A(Y_0)}{\B(Y_0)}$ we just lift $\ftn{\alpha_Z}{K_0(\A(Z))}{K_0(\B(Z))}$ to $\ftn{\beta}{\A(Z)}{\B(Z)}$. 
Then we do as above first for the extensions corresponding to the relative open subset $O_0$ of $Y_0$ and then for the extensions corresponding to the relative open subset $O_1$ of $Y_1$. 
As above, the theorem then follows from Lemma~\ref{l:lemma-orthogonal-ideals} and Lemma~\ref{l:lemma-isomorporchic-pullbacks}. 
\end{proof}

\section{Ad hoc methods}\label{adhoc}

In this section we present arguments which resolve the classification question for some examples of tempered ideal spaces which are not covered by the general results above. Most of the results are based on knowing {strong} classification for smaller ideal spaces, as explained below. Our results of this nature, presented in \cite{segrer:scecc}, are of a rather limited scope, and require restrictions on the $K$-theory, requiring the $K$-groups to be finitely generated,  or even for the graph \Ca to be unital.  We will see this idea in use in a very clear form in the two open cases for three primitive ideals (cf.\ Section~\ref{threeopen}) and in more complicated four-point cases. 

Our starting point is

\begin{theorem}\label{t:adhoc}
Let $\A_{1}$ and $\A_{2}$ be graph \Cas that are tight \Cas over a finite $T_0$-space $X$ and let $U \in \mathbb{O} (X)$ be non-empty.  Let $\mathfrak{e}_{i}$ be the extension $0 \rightarrow \A_{i} (U) \otimes \K \rightarrow \A_{i} \otimes \K \rightarrow \A_{i}(X \setminus U ) \otimes \K \rightarrow 0$.  Suppose
\begin{enumerate}[(1)]
\item $\mathfrak{e}_{i}$ is a full extension;

\item there exists an invertible element $\alpha \in \kk (X; \A_{1} , \A_{2} )$; and

\item the induced invertible element $\alpha_{Y} \in \kk (\A_{1}(Y) \otimes \K , \A_{2}(Y) \otimes \K )$ lifts to an isomorphism from $\A_{1}(Y) \otimes \K$ to $\A_{2}(Y) \otimes \K$ for $Y = U$ and $Y = X \setminus U$.
\end{enumerate}
Then $\A_{1} \otimes \K \cong \A_{2} \otimes \K$. 
\end{theorem}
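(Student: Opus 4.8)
The plan is to follow the extension-theoretic template already used in the proofs of \tref{t:class1} and \tref{t:pullback-technique}: realise $\alpha$ concretely on the two pieces over $U$ and $X\setminus U$, show that the resulting two Busby maps agree in $\kk^1$, absorb the discrepancy by a unitary using the corona factorization property, and finally promote this to an isomorphism of the extensions $\mathfrak{e}_1$ and $\mathfrak{e}_2$ via Theorem~2.2 of \cite{elp:mecpfbi}. All algebras in sight are stable, being of the form $(-)\otimes\K$, which is exactly what the absorption arguments below require.

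First I would invoke hypothesis (3) to fix isomorphisms $\ftn{\varphi_U}{\A_1(U)\otimes\K}{\A_2(U)\otimes\K}$ and $\ftn{\varphi_{X\setminus U}}{\A_1(X\setminus U)\otimes\K}{\A_2(X\setminus U)\otimes\K}$ lifting $\alpha_U$ and $\alpha_{X\setminus U}$, so that $\kk(\varphi_U)=\alpha_U$ and $\kk(\varphi_{X\setminus U})=\alpha_{X\setminus U}$. The extension $\mathfrak{e}_i$ carries a Busby map $\ftn{\busby_{\mathfrak{e}_i}}{\A_i(X\setminus U)\otimes\K}{\corona{\A_i(U)\otimes\K}}$, whose class $[\busby_{\mathfrak{e}_i}]\in\kk^1(\A_i(X\setminus U)\otimes\K,\A_i(U)\otimes\K)$ is the connecting datum of the extension. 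Because $\alpha$ is invertible in $\kk(X;\A_1,\A_2)$, it is compatible with restriction to the coarse filtration $\emptyset\subseteq U\subseteq X$, so that $\alpha_U$ and $\alpha_{X\setminus U}$ intertwine these two connecting classes. Realising $\alpha_U$ and $\alpha_{X\setminus U}$ through $\varphi_U$ and $\varphi_{X\setminus U}$, this compatibility becomes the identity
$$\left[\overline{\varphi_U}\circ\busby_{\mathfrak{e}_1}\right]=\left[\busby_{\mathfrak{e}_2}\circ\varphi_{X\setminus U}\right]$$
in $\kk^1(\A_1(X\setminus U)\otimes\K,\A_2(U)\otimes\K)$, both sides being genuine Busby maps of $\A_1(X\setminus U)\otimes\K$ by $\A_2(U)\otimes\K$ sharing the same source and target.

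Next I would check that these two Busby maps are full and non-unital. Fullness of $\busby_{\mathfrak{e}_i}$ is hypothesis (1), and precomposing with the isomorphism $\varphi_{X\setminus U}$ or postcomposing with the induced corona isomorphism $\overline{\varphi_U}$ preserves fullness since these maps respect the ideal structure; non-unitality is automatic because the quotient $\A_1(X\setminus U)\otimes\K$ is stable. The ideal $\A_2(U)\otimes\K$ is a stable subquotient of a graph \Ca, hence has the corona factorization property by \lref{cfpbasics}(iii). The uniqueness theorem for full, non-unital extensions under the corona factorization property (\cite{gaedk:avbfat}, \cite{dkpwn:cfpaue}) therefore furnishes a unitary $u\in\multialg{\A_2(U)\otimes\K}$ with
$$\busby_{\mathfrak{e}_2}\circ\varphi_{X\setminus U}=\Ad(\overline{u})\circ\overline{\varphi_U}\circ\busby_{\mathfrak{e}_1},$$
where $\overline{u}$ is the image of $u$ in the corona algebra. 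Finally, Theorem~2.2 of \cite{elp:mecpfbi} converts this commuting diagram of Busby maps into an isomorphism of extensions from $\mathfrak{e}_1$ to $\mathfrak{e}_2$ whose ideal and quotient components are $\Ad(u)\circ\varphi_U$ and $\varphi_{X\setminus U}$; in particular $\A_1\otimes\K\cong\A_2\otimes\K$.

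The step I expect to be the main obstacle is the passage in the second paragraph from invertibility of $\alpha$ in $\kk(X;\A_1,\A_2)$ to the identity of the two Busby classes in $\kk^1$. One must argue carefully that an invertible class over $X$, restricted to the two-open-set filtration determined by $U$, intertwines the connecting class of $\mathfrak{e}_i$, and that the chosen liftings $\varphi_U$ and $\varphi_{X\setminus U}$ of its components genuinely realise the relevant Kasparov products on either side. Once this compatibility is secured, the remaining steps are the by now routine \emph{full extension plus corona factorization property plus \cite{elp:mecpfbi}} machinery invoked above.
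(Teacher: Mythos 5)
Your proposal is correct and follows essentially the same route as the paper: hypothesis (3) supplies the liftings $\varphi_U$ and $\varphi_{X\setminus U}$, and the corona-factorization absorption argument you spell out by hand is exactly what the paper compresses into the citation of Proposition~6.1 and Lemma~4.5 of \cite{segrer:ccfis} (and itself unwinds explicitly elsewhere, e.g.\ in the proofs of \tref{t:class1} and \tref{t:pullback-technique}). The one step you flag as the main obstacle --- that invertibility of $\alpha\in\kk(X;\A_1,\A_2)$ intertwines the two Busby classes --- is precisely Theorem~3.3 of \cite{segrer:ccfis}, which the paper can apply because fullness of $\mathfrak{e}_i$ (hypothesis (1)) implies the extensions are essential.
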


\begin{proof}
By (3), there exists an isomorphism $\varphi_{Y} : \A_{1}(Y) \otimes \K \rightarrow \A_{2}(Y) \otimes \K$ for $Y = U$ and $Y= X \setminus U$ such that $\kk ( \varphi_{Y} ) = \alpha_{Y}$.  It follows from (1) that $\frak{e}_i$ are  essential, so by \cite[Theorem~3.3]{segrer:ccfis}, $\alpha_{X\setminus U} \times [ \busby_{ \mathfrak{e}_{2} } ] = [ \busby_{ \mathfrak{e}_{1} } ] \times \alpha_{U}$.  Therefore, $\kk( \varphi_{X \setminus U } ) \times  [ \busby_{ \mathfrak{e}_{2} } ] = [ \busby_{ \mathfrak{e}_{1} } ] \times \kk(\varphi_{U} )$.  Hence, by \cite[Proposition~6.1 and Lemma~4.5]{segrer:ccfis}, we have that $\A_{1} \otimes \K \cong \A_{2} \otimes \K$. 
\end{proof}

\begin{definition}\label{def:class}
For a $T_{0}$ topological space $X$, we will consider classes $\mathcal{C}_{X}$ of separable, nuclear \Cas in the bootstrap category of Rosenberg and Schochet $\mathcal{N}$ such that 
\begin{enumerate}[(1)]
\item any element in $\mathcal{C}_{X}$ is a \Ca over $X$;

\item if $\A$ and $\B$ are in $\mathcal{C}_{X}$ and there exists an invertible element $\alpha$ in $\kk ( X ; \A, \B)$ which induces an isomorphism from $\FKplus{X}{ \A }$ to $\FKplus{X}{\B}$, then there exists an isomorphism $\ftn{ \varphi }{ \A }{ \B }$ such that $\kk ( \varphi ) = \alpha_{X}$, where $\alpha_{X}$ is the element in $\kk (\A, \B )$ induced by $\alpha$.
\end{enumerate} 
\end{definition}

\begin{remark}
Let $X$ be a finite $T_0$-space, let $U$ be an open subset of $X$, and let $\mathcal{C}_{U}$ and $\mathcal{C}_{X \setminus U}$ be classes of \Cas satisfying the conditions of Definition~\ref{def:class}.  If $\A_{1}$ and $\A_{2}$ are separable \Cas such that $\A_{1} (U), \A_{2} (U) \in \mathcal{C}_{U}$ and $\A_{1} (X \setminus U), \A_{2} (X \setminus U) \in \mathcal{C}_{X \setminus U}$, then (3) in Theorem~\ref{t:adhoc} holds.

Let $\mathcal{C}_{X}$ and $\mathcal{C}_{Y}$ be classes of \Cas satisfy the conditions in Definition~\ref{def:class}.  Let $\mathcal{C}_{X \sqcup Y}$ be the classes of \Cas consisting of elements $\A \oplus \B$ with $\A \in \mathcal{C}_{X}$ and $\B \in \mathcal{C}_{Y}$.  Then $\mathcal{C}_{X \sqcup Y}$ satisfies the conditions in Definition~\ref{def:class}.
\end{remark}

\begin{remark}
Here we will provide some examples of classes satisfying the conditions in Definition~\ref{def:class}.

\begin{enumerate}[(1)]
\item By \cite{ek:nkmkna}, the class all stable, nuclear, separable, \Oia \Cas that are tight over a finite $T_0$-space satisfy the conditions in Definition~\ref{def:class}.
\end{enumerate}


%
%

By \cite[Corollary~3.10 and Theorem~3.13]{segrer:scecc} and by the results of \cite{setk:spipgc}, the following classes of \Cas satisfies the conditions in Definition~\ref{def:class}.  

\begin{enumerate}[(1)]\addtocounter{enumi}{1}
\item Let $\mathcal{C}_{\X_{n}}$ be the class of nuclear, separable, tight \Cas $\A$ over $\X_{n}$ such that $\A$ is stable, $\A( \{ n\} )$ is a Kirchberg algebra, $\A( [ 1, n-1] )$ is an AF-algebra, and $K_{i} (\A[Y])$ is finitely generated for all $Y \in \mathbb{LC}(\X_{n})$.

\item Let $\mathcal{C}_{\X_{2}}'$ be the class of unital graph \Cas with exactly one non-trivial ideal with the ideal being an \AFa and the quotient \Oia, simple \Cas.  Let $\mathcal{C}_{\X_{2}}$ be the class of \Cas $\A$ such that $\A \cong \B \otimes \K$ for some $\B \in \mathcal{C}_{\X_{2}}'$. 
\end{enumerate}

By \cite{gae:cilssfa}, the following class of \Cas satisfy the conditions in Definition~\ref{def:class}.  

\begin{enumerate}[(1)]\addtocounter{enumi}{3}
\item Let $\mathcal{C}_{X}$ be the class of stable AF-algebras over $X$.
\end{enumerate}
\end{remark}

\subsection{Linear spaces}\label{threeopen}

This case is solved in \cite{segrer:scecc}, and the reader is referred there for details. However, since this is the most basic case in which our approach via Theorem \ref{t:adhoc} is applied, we will explain the methods for the benefit of the reader. 

\begin{lemma}\label{l:fullmixed}
Let $\A$ be a graph $C^*$-algebra such that $\A$ is a tight \Ca over $\X_{n}$.
\begin{enumerate}[(i)]
\item If $\A(\{n\})$ and $\A( \{ 1 \} )$ are \Oia and $\A ([ 2, n-1] )$ is an AF-algebra, then 
\begin{align*}
\mathfrak{e} : 0 \to \A ( [ 2, n ]) \otimes \K \to \A \otimes \K \to\A ( \{ 1 \} ) \otimes \K \to 0
\end{align*}
is a full extension.

\item If $\A ( [ k, n ] )$ and $\A ( [ 1, k-2 ] )$ are AF-algebras and $\A( \{ k-1\} )$ is \Oia, then 
\begin{align*}
\mathfrak{e} : 0 \to \A( [ k , n ] ) \otimes \K \to \A \otimes \K \to \A ( [ 1, k-1 ] ) \otimes \K \to 0
\end{align*}
is a full extension.

\item If $\A ( [ k, n ] )$ and $\A ( [ 1, k-2 ] )$ are AF-algebras and $\A( \{ k-1\} )$ is \Oia, then 
\begin{align*}
\mathfrak{e} : 0 \to \A( [ k -1, n ] ) \otimes \K \to \A \otimes \K \to \A ( [ 1, k-2 ] ) \otimes \K \to 0
\end{align*}
is a full extension.
\end{enumerate}
\end{lemma}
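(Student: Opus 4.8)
The plan is to prove all three parts by showing that the Busby map of each extension is a \emph{full} \shom, i.e.\ that $\busby_{\mathfrak{e}}(a)$ is a full element of the relevant corona algebra for every nonzero positive $a$. The first reduction I would make exploits that the quotient algebra in each extension is a tight algebra over a linearly ordered space, and hence has a smallest nonzero ideal which is contained in every nonzero ideal (for a quotient over $[1,j]$ this is $\A(\{j\})$, since $\{j\}$ is the smallest nonempty up-set). A short general observation then applies: if a \shom $\ftn{\varphi}{\mathfrak{D}}{\mathfrak{B}}$ is given and $\mathfrak{D}$ possesses a smallest nonzero ideal $\IdealK$, then $\varphi$ is full if and only if $\varphi|_{\IdealK}$ is full, because for nonzero positive $a$ the ideal generated by $a$ contains $\IdealK$, so the ideal generated by $\varphi(a)$ contains $\varphi(\IdealK)$, which is already full. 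Applying this to each Busby map reduces the problem to a restricted extension whose quotient is a \emph{simple} subquotient at one end of the chain: in (i) this is already $\A(\{1\})$; in (ii) it is $0\to\A([k,n])\to\A([k-1,n])\to\A(\{k-1\})\to 0$; and in (iii) it is $0\to\A([k-1,n])\to\A([k-2,n])\to\A(\{k-2\})\to 0$.

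Since graph \Cas with finitely many ideals have real rank zero, in this simple-quotient situation I would fix a full projection $p$ in the simple quotient and reduce fullness of the Busby map to norm-fullness of the projection $\busby_{\mathfrak{e}}(p)$ in the corona of the (stable) ideal. To locate the purely infinite behaviour that drives fullness, I would push $\busby_{\mathfrak{e}}$ forward along the quotient of the ideal onto its top simple layer, producing the Busby map of a two-point subquotient straddling the unique \Oia vertex. Concretely this is the extension of $\A([1,2])$ in case (i), of $\A([k-1,k])$ in case (ii), and of $\A([k-2,k-1])$ in case (iii); in each of these the algebra is a graph \Ca with exactly one nontrivial ideal and is not an \AFa (one of the two simple pieces is \Oia), so Proposition~\ref{p:full} shows the seam extension is full. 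This delivers norm-fullness of the image of $\busby_{\mathfrak{e}}(p)$ in the corona of the top simple layer of the ideal.

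The main obstacle is the final step: propagating norm-fullness from this top simple layer down through the remaining \AFa strata of the ideal to conclude that $\busby_{\mathfrak{e}}(p)$ is norm-full in the full corona $\corona{\A(\cdots)}$. Here I would use that each subquotient is stable with the corona factorization property (Lemma~\ref{cfpbasics}), together with the essentiality of the minimal layer of the linear ideal, arguing layer by layer exactly as in the full-extension computations underlying the proofs of Theorems~\ref{t:class1} and~\ref{t:class2} (the corona-factorization technology of \cite{segrer:ccfis}). This is where the hypotheses on the \emph{positions} of the \Oia vertices enter: in (i) both endpoints are \Oia, which controls the top and bottom layers of $\A([2,n])$ and lets the \AFa middle layers be bridged, whereas in (ii) and (iii) the single \Oia vertex $k-1$ sits adjacent to the cut, so the corona factorization property must carry norm-fullness across the purely \AFa part of the ideal in one direction only. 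Finally, the degenerate cases (already-simple quotient, or no \Oia vertices at all, in which case the algebra is an \AFa and Elliott's theorem \cite{gae:cilssfa} applies) are dispatched by the obvious modifications.
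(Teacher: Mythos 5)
Your two reductions are sound and in fact coincide with the endpoints of the paper's own argument for (iii): restricting the Busby map to the smallest ideal of the quotient (which, the lattice being linear, is contained in every nonzero ideal and hence essential) is exactly the paper's appeal to Proposition~5.4 of \cite{segrer:ccfis}, and the fullness of the two-point ``seam'' extensions $0\to\A(\{2\})\to\A([1,2])\to\A(\{1\})\to0$, etc., via Proposition~\ref{p:full} is the paper's first display. The gap is in the load-bearing middle step. Pushing $\busby_{\mathfrak e}$ forward along the quotient of the ideal onto its top simple layer only yields fullness of $\overline{\pi}\circ\busby_{\mathfrak e}$, and fullness passes in \emph{that} direction for free ($\overline{\pi}$ is surjective, so it maps full elements to full elements); what your plan requires is the converse, e.g.\ in case (ii) that fullness of the image of $\busby_{\mathfrak e}(p)$ in $\corona{\A(\{k\})\otimes\K}$ forces $\busby_{\mathfrak e}(p)$ to be norm-full in $\corona{\A([k,n])\otimes\K}$. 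That corona is far from simple, and the ideal generated by $\busby_{\mathfrak e}(p)$ could a priori miss the part of it coming from multipliers supported in the lower \AFa strata $\A([k+1,n])$; nothing in essentiality rules this out. The tool you invoke to bridge this, the corona factorization property, cannot do so: in this paper and in \cite{segrer:ccfis}, CFP is used only \emph{after} fullness is in hand, to upgrade equality of $\kk$-classes of full extensions to unitary equivalence of Busby maps (via the absorption theorems of \cite{gaedk:avbfat} and \cite{dkpwn:cfpaue}); it is not a mechanism for establishing fullness, and no ``layer by layer'' fullness propagation of the kind you describe appears in the proofs of Theorems~\ref{t:class1} or \ref{t:class2}.

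Indeed, the paper does not close this step by soft arguments either. Parts (i) and (ii) are quoted wholesale from \cite{segrer:scecc}, and the one proof given in-text, for (iii), needs precisely a part-(ii)-type input at your problem spot: fullness of $0\to\A([k,n])\otimes\K\to\A([k-1,n])\otimes\K\to\A(\{k-1\})\otimes\K\to0$, an extension whose ideal is a non-simple \AFa, is obtained from Proposition~3.10 of \cite{semt:cnga} together with Lemma~1.5 of \cite{segrer:cecc} --- a graph-algebra-specific result, not a consequence of essentiality plus CFP. (Note that in case (ii) your first reduction lands exactly on this extension, so your proposal would have to reprove that cited fact, which is where all the content lies.) The paper then climbs one level using the composition result Proposition~3.2 of \cite{segrer:gclil}, combining that fullness with the fullness of your seam $0\to\A(\{k-1\})\to\A([k-2,k-1])\to\A(\{k-2\})\to0$; this composition ingredient is absent from your outline. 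A small further slip: in (i) the top layer of the ideal $\A([2,n])$ is $\A(\{2\})$, which is an \AFa once $n\ge3$; the \Oia vertices are the quotient $\A(\{1\})$ and the bottom layer $\A(\{n\})$, so they do not ``control the top layer'' of the ideal as you assert.
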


\begin{proof}
In \cite{segrer:scecc}, we prove (i) and (ii).  We now prove (iii).  Note that 
$$
0 \to \A( \{ k -1\} ) \otimes \K \to \A( [k-2,k-1] ) \otimes \K \to \A ( \{ k-2 \} ) \otimes \K \to 0
$$
is full since this is an essential extension and $\A( \{ k-1\} )$ is \Oia.  Since $\A( [ k,n] )$ is the largest \textit{AF}-ideal of $\A( [ k - 1, n ] )$ and $\A ( [ k-1,n] ) / \A( [ k , n ] ) = \A ( \{ k - 1 \} )$ is \Oia, by \cite[Proposition~3.10]{semt:cnga} and \cite[Lemma~1.5]{segrer:cecc}, $0 \to \A( [k ,n] ) \otimes \K \to \A( [k-1,n] ) \otimes \K \to \A ( \{ k -1\} ) \otimes \K \to 0$ is full.  By \cite[Proposition~3.2]{segrer:gclil}, $0 \to \A( [ k -1,n]  ) \otimes \K \to \A( [k-2,n] ) \otimes \K \to \A ( \{ k-2 \} ) \otimes \K \to 0$ is full.  Since $\A ( \{ k -2 \} ) = \A( [k-2,n] ) / \A( [ k -1,n]  )$ is an essential of $\A / \A ( [ k-1,n])$, the extension in (iii) is full by \cite[Proposition~5.4]{segrer:ccfis}. 
\end{proof}

%

To solve the cases \myref{3}{7}{5} and \myref{4}{3F}{9}, we now argue as follows:

\begin{theorem}\label{t:graphmixed1}
Let $\A_{1}$ and $\A_{2}$ be graph \Cas that are tight \Cas over $\X_{n}$.  Suppose
\begin{enumerate}[(i)]
\item $\A_{i}( \{ n \} )$ and $\A_{i} ( \{ 1 \} )$ are \Oia;

\item $\A_{i} ( [ 2, n-1] )$ is an AF-algebra; and

\item the $K$-groups of $\A_{i}$ are finitely generated.
\end{enumerate}
Then $\A_{1} \otimes \K \cong \A_{2} \otimes \K$ if and only if $\FKplus{\X_n}{ \A_{1} \otimes \K}  \cong \FKplus{\X_n}{ \A_{2} \otimes \K }$.
\end{theorem}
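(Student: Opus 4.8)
The forward implication is immediate, since $\FKplus{\X_n}{-}$ is a stable-isomorphism invariant. For the converse, fix an isomorphism $\alpha\colon\FKplus{\X_n}{\A_1\otimes\K}\to\FKplus{\X_n}{\A_2\otimes\K}$; replacing each $\A_i$ by $\A_i\otimes\K$ we may assume $\A_1$ and $\A_2$ are stable, so that $\alpha$ is an isomorphism $\FKplus{\X_n}{\A_1}\to\FKplus{\X_n}{\A_2}$. The plan is to verify the three hypotheses of \tref{t:adhoc} for the open set $U=[2,n]$, whose complement is the closed point $\{1\}$; the theorem then yields $\A_1\otimes\K\cong\A_2\otimes\K$.

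For hypothesis~(1), I would apply \lref{l:fullmixed}(i), whose assumptions are exactly (i) and (ii), to conclude that the extension $\mathfrak{e}_i\colon 0\to\A_i([2,n])\otimes\K\to\A_i\otimes\K\to\A_i(\{1\})\otimes\K\to 0$ is full. For hypothesis~(2), note that $\X_n$ is totally ordered and hence an accordion space, so the universal coefficient theorem for accordion spaces (\cite{rmrn:ctsfk}, \cite{rbmk:uctcfts}) applies: the filtered $K$-theory isomorphism underlying $\alpha$ lifts to an invertible element $\widetilde{\alpha}\in\kk(\X_n;\A_1,\A_2)$. Since the map $\widetilde{\alpha}$ induces on each $K_0$-group is the order isomorphism prescribed by $\alpha$, the class $\widetilde{\alpha}$ induces the ordered isomorphism $\alpha$ itself.

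It remains to check hypothesis~(3) for $Y=\{1\}$ and $Y=[2,n]$, and here I would invoke the classes of Definition~\ref{def:class} through the remark following it that reduces~(3) to membership in suitable classes $\mathcal{C}$. The restrictions of $\widetilde{\alpha}$ to the open set $[2,n]$ and to the closed point $\{1\}$ are invertible $\kk$-elements inducing isomorphisms of the respective ordered filtered $K$-theories. Now $\A_i(\{1\})$ is a simple, separable, nuclear, \Oia graph algebra, hence a stable Kirchberg algebra, lying in the class of stable \Oia algebras (item~(1) of the examples after Definition~\ref{def:class}). Likewise $\A_i([2,n])$ is tight over $[2,n]\cong\X_{n-1}$; its minimal nonzero ideal $\A_i(\{n\})$ is a Kirchberg algebra by~(i) while the quotient $\A_i([2,n-1])$ is an \AFa by~(ii), and together with the finite generation~(iii) this places $\A_i([2,n])$ in the class $\mathcal{C}_{\X_{n-1}}$ (item~(2)). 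By the remark, hypothesis~(3) holds, and \tref{t:adhoc} completes the argument.

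The only genuinely non-formal input is the verification that $\A_i([2,n])$ belongs to $\mathcal{C}_{\X_{n-1}}$, and I expect this to be the crux: it is precisely here that finite generation of the $K$-groups~(iii) is indispensable, since it is required by the strong classification of the mixed linear case (one Kirchberg ideal on top, an \AFa below) established in \cite{segrer:scecc}. Everything else is an assembly of the fullness statement \lref{l:fullmixed}(i), the accordion-space UCT, and the bookkeeping provided by the remark.
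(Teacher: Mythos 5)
Your proposal is correct and follows essentially the same route as the paper: fullness of $\mathfrak{e}_i$ via \lref{l:fullmixed}(i), lifting $\alpha$ to an invertible element of $\kk(\X_n;\A_1,\A_2)$ via Theorem~4.14 of \cite{rmrn:ctsfk}, and verifying hypothesis (3) of \tref{t:adhoc} by Kirchberg--Phillips on $\{1\}$ and strong classification of the mixed extension on $[2,n]$. The only cosmetic difference is that you invoke the packaged class $\mathcal{C}_{\X_{n-1}}$ from the remark after Definition~\ref{def:class}, while the paper cites Theorem~3.9 of \cite{segrer:scecc} directly and makes explicit the step where finite generation forces $\kl=\kk$ --- precisely the role you correctly identify for hypothesis (iii).
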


\begin{proof}
Let $\mathfrak{e}_{i}$ be the extension 
\begin{align*}
0 \to \A_{i} ( [ 2, n] ) \otimes \K \to \A_{i} \otimes \K \to \A_{i} ( \{ 1\}) \otimes \K \to 0.
\end{align*} 
By Lemma \ref{l:fullmixed}(i), $\mathfrak{e}_{i}$ is a full extension.  Thus, Assumption (1) of Theorem~\ref{t:adhoc} holds.  Suppose $\ftn{ \alpha }{ \FKplus{\X_n}{ \A_{1} \otimes \K } }{ \FKplus{\X_n}{ \A_{2} \otimes \K } }$ is an isomorphism.  Lift $\alpha$ to an invertible element $x \in \kk ( \X_n; \A_{1} \otimes \K , \A_{2} \otimes \K )$, such a lifting exists by Theorem~4.14 of \cite{rmrn:ctsfk}.  Therefore, Assumption (2) of Theorem~\ref{t:adhoc} holds.  

Note now that $x$ induces invertible elements $r_{\X_n}^{[2,n]} (x)$ in $\kk( [2,n] ; \A_{1} ( [ 2,n] ) \otimes \K , \A_{2} ( [2, n ] ) \otimes \K )$ and $r_{\X_n}^{[1]} (x)$ in $\kk ( \A ( \{ 1\} ) \otimes \K , \A_{2} ( \{ 1 \} ) \otimes \K )$. Note that $\A_{i} ([2,n])$ has a smallest ideal $\A_{i} ( \{ n \} )$ which is \Oia and the quotient $\A_{i} ( [2,n-1])$ is an \AFa.  By Theorem~3.9 of \cite{segrer:scecc}, there exists an isomorphism $\ftn{ \varphi }{ \A_{1}( [ 2, n ]) \otimes \K }{ \A_{2} ( [ 2, n ] ) \otimes \K }$ such that $\kl ( \varphi )$ is the invertible element in $\kl ( \A_{1} ( [2,n] ) , \A_{2} ( [2,n] ) )$ induced by $x$.  Since the $K$-theory of $\A_{1}$ is finitely generated, $\kl ( \A_{1} ( [2,n] ) , \A_{2} ( [2,n] ) ) = \kk ( \A_{1} ( [2,n] ) , \A_{2} ( [2,n] ) )$.  Thus,  $\kk ( \varphi )$ is the invertible element in $\kk ( \A_{1} ( [2,n] ) , \A_{2} ( [2,n] ) )$ induced by $x$.  By the Kirchberg-Phillips classification, there exists an isomorphism $\ftn{ \psi }{ \A_{1} (\{1 \} ) \otimes \K }{ \A_{2} (\{ 1\}) \otimes \K }$ lifting $r_{\X_n}^{[1]} (x)$.  We have just shown that Assumption (3) of Theorem~\ref{t:adhoc} holds.

By Theorem~\ref{t:adhoc}, we can conclude that $\A_{1} \otimes \K \cong \A_{2} \otimes \K$.
\end{proof}

Similarly, one solves \myref{3}{7}{2}, \myref{4}{3F}{2}, and \myref{4}{3F}{4} using

\begin{theorem}\label{t:graphmixed2}
Let $\A_{1}$ and $\A_{2}$ be graph \Cas that are tight \Cas over $\X_n$.  Suppose
\begin{enumerate}[(i)]
\item $\A_{i} ( [k,n] )$ and $\A_{i}  ( [ 1, k-2 ] )$ are \AFas{};
\item $\A_{i} ( \{ k-1\} )$ is \Oia; and
\item the $K$-groups of $\A_{i}$ are finitely generated.
\end{enumerate}
Then $\A_{1} \otimes \K \cong \A_{2} \otimes \K$ if and only if $\FKplus{\X_n}{ \A_{1}  \otimes \K } \cong \FKplus{\X_n}{ \A_{2} \otimes \K}$.  
\end{theorem}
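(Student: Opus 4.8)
The plan is to prove this by mirroring the argument for Theorem~\ref{t:graphmixed1}, merely interchanging the roles of the two outer \AFa pieces and invoking the matching part of Lemma~\ref{l:fullmixed}. One direction is immediate: $\FKplus{\X_n}{-}$ is invariant under stable isomorphism, so $\A_1\otimes\K\cong\A_2\otimes\K$ forces $\FKplus{\X_n}{\A_1\otimes\K}\cong\FKplus{\X_n}{\A_2\otimes\K}$. For the substantive direction I would start from an isomorphism $\ftn{\alpha}{\FKplus{\X_n}{\A_1\otimes\K}}{\FKplus{\X_n}{\A_2\otimes\K}}$ and feed the data into Theorem~\ref{t:adhoc}, taking the open set $U=[k,n]$ (so that $X\setminus U=[1,k-1]$); here $[k,n]$ is open because the open subsets of $\X_n$ are exactly the up-sets.

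First I would verify hypotheses (1) and (2) of Theorem~\ref{t:adhoc}. For (1), Lemma~\ref{l:fullmixed}(ii) applies verbatim under our assumptions (i) and (ii): since $\A_i([k,n])$ and $\A_i([1,k-2])$ are \AFas and $\A_i(\{k-1\})$ is \Oia, the extension $0\to\A_i([k,n])\otimes\K\to\A_i\otimes\K\to\A_i([1,k-1])\otimes\K\to 0$ is full. For (2), $\X_n$ is a linear, hence accordion, space, so Theorem~4.14 of \cite{rmrn:ctsfk} lets me lift $\alpha$ to an invertible $x\in\kk(\X_n;\A_1\otimes\K,\A_2\otimes\K)$.

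The heart of the argument is hypothesis (3), where I must lift the two restricted classes to honest isomorphisms. The piece $\A_i([k,n])$ is an \AFa, so $r_{\X_n}^{[k,n]}(x)$ lifts to an isomorphism by Elliott's theorem \cite{gae:cilssfa} (equivalently, these stable \AFas form a class satisfying Definition~\ref{def:class}). For the other piece, note that $\A_i([1,k-1])$ is a tight \Ca over $[1,k-1]$ whose smallest ideal is $\A_i(\{k-1\})$, which is \Oia (a Kirchberg algebra), and whose quotient $\A_i([1,k-2])$ is an \AFa --- precisely the configuration treated in \cite{segrer:scecc}. So Theorem~3.9 of \cite{segrer:scecc} yields an isomorphism $\ftn{\varphi}{\A_1([1,k-1])\otimes\K}{\A_2([1,k-1])\otimes\K}$ whose $\kl$-class is the invertible element induced by $x$; since by (iii) the $K$-groups of $\A_i$, and hence of every subquotient, are finitely generated, $\kl=\kk$ here, giving $\kk(\varphi)=r_{\X_n}^{[1,k-1]}(x)$. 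With (1), (2), (3) in hand, Theorem~\ref{t:adhoc} delivers $\A_1\otimes\K\cong\A_2\otimes\K$.

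The main obstacle is exactly this last step: the classification of \cite{segrer:scecc} controls the isomorphism only up to its $\kl$-class, whereas Theorem~\ref{t:adhoc} needs agreement of the full $\kk$-classes in order to glue the two liftings coherently along the full extension. Closing this gap is the role of the finite-generation hypothesis (iii), through the identification $\kl=\kk$ for finitely generated $K$-theory; without it the argument stalls just as it does in the three-point linear open cases. A secondary point worth checking carefully is the orientation of $\X_n$ --- that $[k,n]$ is open and that $\A_i(\{k-1\})$ is genuinely the smallest ideal of $\A_i([1,k-1])$ --- but this is routine once one recalls that open sets are up-sets, so I do not expect it to pose difficulty.
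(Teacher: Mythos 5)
Your proposal is correct and is essentially the paper's own argument: the paper defers the details of Theorem~\ref{t:graphmixed2} to \cite{segrer:scecc}, but its in-text proof of the companion Theorem~\ref{t:graphmixed1} runs exactly along your lines --- fullness from Lemma~\ref{l:fullmixed}, a lift of $\alpha$ to an invertible element of $\kk(\X_n;-,-)$ via Theorem~4.14 of \cite{rmrn:ctsfk}, Elliott on the \AFa piece, Theorem~3.9 of \cite{segrer:scecc} together with the identification $\kl=\kk$ (from finite generation) on the mixed piece, and Theorem~\ref{t:adhoc} to conclude. Your choice of the cut $U=[k,n]$, so that the mixed algebra $\A_i([1,k-1])$ (smallest ideal Kirchberg, quotient \AFa) appears as the quotient rather than the ideal, is the correct mirror image, and your isolation of hypothesis (iii) as precisely what closes the $\kl$-versus-$\kk$ gap matches the paper's use of it.
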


A proof is given in \cite{segrer:scecc}.

\subsection{Accordion spaces}

\begin{lemma}
Let $\A$ be a graph \Ca with signature \mspk{4}{F}, and let $\IdealI$ be the smallest ideal of $\A$.
\begin{enumerate}[(1)]
\item When $\si{x} = \si{3}, \si{5}, \si{7},\si{9},\si{A},\si{B},\si{D}$, then the extension $0 \to \IdealI \otimes \bK \to \A \otimes \bK \to \A / \IdealI \otimes \bK  \to 0$ is full.

\item When $\si{x} =\si{2} , \si{4},\si{C}$, then the extension $0 \to \IdealI \otimes \bK \to \A \otimes \bK  \to \A / \IdealI \otimes \bK \to 0$ is full provided that $\A$ is unital.
\end{enumerate}
\end{lemma}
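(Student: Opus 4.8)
The plan is to exploit that $\msp{4}{F}$ is a fan-down space in the sense of Subsection~\ref{fandown}: it has a unique maximal point $M$ (the common top of its two arms), so the smallest ideal is the \emph{simple} ideal $\IdealI=\A(\{M\})$, and deleting $M$ splits the quotient as a direct sum $\A(X\setminus\{M\})\cong\A(X_1)\oplus\A(X_2)$ (the two arms are relatively clopen), where $X_1$ is a single point and $X_2$ is a two-point chain. After stabilising, the extension in the statement is thus exactly $\mathfrak e:0\to\A(\{M\})\to\A\to\A(X\setminus\{M\})\to0$, and the whole task is to prove that $\mathfrak e$ is full in the cases listed.

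First I would reduce fullness of $\mathfrak e$ to the two arms. By Lemma~\ref{l:pullback} the composite $\busby_{\mathfrak e}\circ\iota_{\A,X_i}$ is the Busby map of the restricted extension $0\to\A(\{M\})\to\A(X_i\cup\{M\})\to\A(X_i)\to0$, and by Lemma~\ref{l:directsumfull} the extension $\mathfrak e$ is full as soon as each of these two arm extensions is full. So it suffices to prove fullness of the two extensions by $\IdealI$ over the linear spaces $X_1\cup\{M\}$ and $X_2\cup\{M\}$, each of which carries $M$ as its top element.

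Next I would establish fullness of each arm extension by peeling off from $M$ outward along the chain. The innermost two-point extension $0\to\A(\{M\})\to\A(\{v,M\})\to\A(\{v\})\to0$, with $v$ the point immediately below $M$, is essential because $M$ is the maximum of $\{v,M\}$; whenever $\A(\{v,M\})$ fails to be an \AFa{} (that is, $v$ or $M$ is \Oia), Proposition~\ref{p:full} shows it is full. One then propagates fullness outward through the remaining essential extension of the chain exactly as in the proof of Lemma~\ref{l:fullmixed}, using Proposition~5.4 of \cite{segrer:ccfis} together with the essentiality of the enlarged ideal in the quotient.

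Finally I would run the case analysis. For $\si{x}=\si{3},\si{5},\si{7},\si{9},\si{A},\si{B},\si{D}$ one reads off from the temperature that on \emph{each} arm the innermost two-point subalgebra is never an \AFa{} (either $M$ itself, or the point adjacent to $M$, is \Oia), so Proposition~\ref{p:full} applies on both arms and fullness follows with no extra hypothesis; this is part~(1). For $\si{x}=\si{2},\si{4},\si{C}$ there is an arm whose innermost two-point subalgebra \emph{is} an \AFa{}, and this is the main obstacle, since Proposition~\ref{p:full} is then unavailable. Here I would use unitality of $\A$ to produce a norm-full projection and invoke the corona-factorisation / norm-full-projection argument from the proof of Theorem~\ref{t:class2} (via Lemma~3.3 of \cite{segrer:okfe}, valid because an \AFa{} has an approximate unit of projections) to force fullness of the offending \AFa{} extension; the same outward propagation then completes the proof, giving part~(2).
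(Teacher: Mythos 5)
Your part (1) is correct, though it routes all seven temperatures through the fan machinery, whereas the paper only needs that for the single case \myref{4}{F}{10}: for $\si{x}=\si{3},\si{5},\si{7},\si{9},\si{B},\si{D}$ the top point is \Oia, so $\IdealI\otimes\K$ is a simple, purely infinite, stable \Ca, its corona is simple, and fullness follows in one line from essentiality of the smallest ideal --- no splitting into arms, no Proposition~\ref{p:full}, no outward propagation. Your uniform treatment (restrict to each relatively clopen arm via Lemma~\ref{l:pullback}, get fullness of the innermost two-point piece from Proposition~\ref{p:full} since that piece is never AF in the cases of (1), push outward along the chain using Proposition~5.4 of \cite{segrer:ccfis} and essentiality of the enlarged ideal in the quotient, recombine with Lemma~\ref{l:directsumfull}) is valid and is essentially how the paper handles \myref{4}{F}{10} itself (there via Proposition~5.4 and Corollary~5.6 of \cite{segrer:ccfis}); it costs a bit more but homogenizes the case analysis.

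Part (2), however, has a genuine gap. The mechanism you propose --- produce a norm-full projection from the unit and run the corona-factorization/norm-full-projection argument of Theorem~\ref{t:class2} via Lemma~3.3 of \cite{segrer:okfe} --- cannot establish fullness of the offending AF-by-AF piece. In Theorem~\ref{t:class2} that argument takes fullness of certain Busby maps as an \emph{input} (supplied there, for the \Oia arms only, by Corollary~5.6 of \cite{segrer:ccfis}) and uses the corona factorization property merely to upgrade an already norm-full multiplier projection to one equivalent to the unit; it never shows that $\busby(a)$ generates the corona for every nonzero positive $a$ in the quotient, which is what fullness means, and no AF-by-AF extension is ever proved full in that proof. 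A sanity check that your mechanism must fail: by Lemma~\ref{cfpbasics} every ideal of a graph \Ca with finitely many ideals has the corona factorization property, unital or not, so if CFP forced fullness the unitality hypothesis in (2) would be superfluous --- yet an essential extension by a stable simple AF ideal such as $M_{2^{\infty}}\otimes\K$ need not be full, since its corona is not simple. What unitality actually buys, and what the paper uses, is the structure theorem stated in the introduction: every non-unital simple AF subquotient of a unital graph \Ca is isomorphic to $\K$. In the cases $\si{x}=\si{2},\si{4},\si{C}$ the smallest ideal $\IdealI$ is simple and AF, and it cannot be unital (a unital essential ideal would be a direct summand, disconnecting the primitive ideal space), so $\IdealI\otimes\K\cong\K$; the corona is then the simple Calkin algebra, and fullness of the whole extension again drops out of essentiality at once, with no arm decomposition needed.
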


\begin{proof}
First note that the extension $0 \to \IdealI \otimes \K  \to \A \otimes \K \to \A / \IdealI \otimes \K \to 0$ is essential.  Hence, in the case \mspk{4}{F} for $\si{x} = \si{3}, \si{5}, \si{7},\si{9},\si{B},\si{D}$ the extension is full since $\IdealI \otimes \K$ is a simple, purely infinite, stable \Ca, which implies that $\corona{\IdealI \otimes \K}$ is simple.  If $A$ is unital and $Y$ is the space \mspk{4}{F} for $\si{ x}=\si{2}, \si{4}$, and $\si{C}$, then the extension is full since in this case $\IdealI \cong \K$ and $\corona{\K}$ is simple.  We are left with showing the extension is full for the case \myref{4}{F}{10}.  This case follows from \cite[Proposition~5.4 and Corollary~5.6]{segrer:ccfis}. 
\end{proof}

\begin{lemma}
Let $\A$ be a graph \Ca with tempered signature \mspk{4}{3F} for $\si{x} = \si{5},\si{6},\si{A},\si{D}$.
Then the ideal lattice of $\A$ is $0 \unlhd \IdealI_{1} \unlhd \IdealI_{2} \unlhd \IdealI_{3} \unlhd \A$ and the extension $0 \to \IdealI_{2} \otimes \K \to \A \otimes \K \to \A/ \IdealI_{2} \otimes \K \to 0$ is full.  
\end{lemma}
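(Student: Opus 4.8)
The plan is to reduce, in every case, to the fullness of an extension whose quotient is \emph{simple}, and then either to quote \lref{l:fullmixed} or, in the one genuinely new case, to propagate fullness one layer up the ideal chain. First I would record the structure. Since \msp{4}{3F} is the totally ordered four-point space, a graph \Ca tight over it has a linear ideal lattice $0\unlhd\IdealI_1\unlhd\IdealI_2\unlhd\IdealI_3\unlhd\A$, and its four simple subquotients $\IdealI_1$, $\IdealI_2/\IdealI_1$, $\IdealI_3/\IdealI_2$, $\A/\IdealI_3$ carry the temperatures read off the tempered signature: writing $\blacksquare$ for an $\Oia$ subquotient and $\square$ for an \AFa one, these are $(\blacksquare,\square,\blacksquare,\square)$ for $\si{5}$, $(\blacksquare,\blacksquare,\square,\square)$ for $\si{6}$, $(\square,\blacksquare,\square,\blacksquare)$ for $\si{A}$, and $(\blacksquare,\square,\blacksquare,\blacksquare)$ for $\si{D}$. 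Because the lattice is a chain, $\IdealI_2$ is an essential ideal, so the extension in question is essential; moreover $\IdealI_3/\IdealI_2$ is the unique minimal nonzero ideal of $\A/\IdealI_2$, hence essential there. By \cite[Proposition~5.4]{segrer:ccfis} it therefore suffices to prove that the restricted extension $0\to\IdealI_2\otimes\K\to\IdealI_3\otimes\K\to(\IdealI_3/\IdealI_2)\otimes\K\to 0$ is full, and its quotient $\IdealI_3/\IdealI_2$ is simple.

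Next I would dispose of $\si{5}$, $\si{A}$, $\si{D}$ by viewing $\IdealI_3$ as a graph \Ca tight over $\X_3$ (it is again a graph \Ca by \lref{cfpbasics}) with subquotients $\IdealI_1$, $\IdealI_2/\IdealI_1$, $\IdealI_3/\IdealI_2$ listed from the smallest ideal to the largest quotient. For $\si{5}$ and $\si{D}$ this triple is $(\blacksquare,\square,\blacksquare)$, so the reduced extension is exactly the one shown to be full in \lref{l:fullmixed}(i); for $\si{A}$ it is $(\square,\blacksquare,\square)$, and the reduced extension is the one treated in \lref{l:fullmixed}(iii) (with $n=3$, $k=3$). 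In both cases the reduced extension is full, whence the original one is.

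The remaining signature $\si{6}$ is the main obstacle: here $\IdealI_2$ is $\Oia$ (both of its simple subquotients are), so $\corona{\IdealI_2\otimes\K}$ is \emph{not} simple, while the simple quotient $\IdealI_3/\IdealI_2$ is now an \AFa, so no part of \lref{l:fullmixed} applies directly. My plan is to run the same two-step argument as in the proof of \lref{l:fullmixed}(iii), but with a different justification for the base step. The base extension $0\to\IdealI_1\otimes\K\to\IdealI_2\otimes\K\to(\IdealI_2/\IdealI_1)\otimes\K\to 0$ is full: it is essential and $\IdealI_1\otimes\K$ is simple, stable and $\Oia$, so $\corona{\IdealI_1\otimes\K}$ is simple (equivalently, $\IdealI_2$ has a single nontrivial ideal and is not an \AFa, so \pref{p:full} applies). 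Propagating this one layer up the chain, exactly as in \lref{l:fullmixed}(iii) via \cite[Proposition~3.2]{segrer:gclil} and \cite[Corollary~5.6]{segrer:ccfis}, and using that $\IdealI_2\otimes\K$ has the corona factorization property by \lref{cfpbasics}, would yield fullness of $0\to\IdealI_2\otimes\K\to\IdealI_3\otimes\K\to(\IdealI_3/\IdealI_2)\otimes\K\to 0$, which is what remained.

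I expect the delicate point to be precisely this propagation. In \lref{l:fullmixed}(iii) the bottom ideal being transported up is an \AFa, whereas for $\si{6}$ the entire kernel $\IdealI_2$ is purely infinite; one must therefore check that the detection of full elements of $\corona{\IdealI_2\otimes\K}$ used there still goes through. Concretely, the cleanest way I foresee is to push $\mathfrak e'$ forward along $\IdealI_2\to\IdealI_2/\IdealI_1$, obtaining the essential extension $0\to(\IdealI_2/\IdealI_1)\otimes\K\to(\IdealI_3/\IdealI_1)\otimes\K\to(\IdealI_3/\IdealI_2)\otimes\K\to 0$ with simple $\Oia$ ideal, hence full into the simple corona $\corona{(\IdealI_2/\IdealI_1)\otimes\K}$, and then to combine this ``fullness modulo $\IdealI_1\otimes\K$'' with the base fullness ``along $\IdealI_1\otimes\K$'' through the corona factorization property; verifying that these two pieces of fullness assemble to fullness of $\busby_{\mathfrak e'}$ in the non-simple corona is where the argument must be carried out with care.
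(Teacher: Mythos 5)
Your overall architecture is the paper's: reduce via \cite[Proposition~5.4]{segrer:ccfis} to fullness of $0 \to \IdealI_{2}\otimes\K \to \IdealI_{3}\otimes\K \to \IdealI_{3}/\IdealI_{2}\otimes\K \to 0$, dispose of $\si{x}=\si{5},\si{A},\si{D}$ by \lref{l:fullmixed} applied to $\IdealI_{3}$ viewed over $\X_3$ (your identification of parts (i) and (iii) is exactly right), and handle the remaining case by proving two one-step extensions full and gluing them with \cite[Proposition~3.2]{segrer:gclil}. However, there is a concrete error in your treatment of $\si{x}=\si{6}$: you misread the temperatures. Since $t_1$ labels the closed point (the top quotient) and $t_4$ the minimal ideal, $\si{6}=0110$ gives, from $\IdealI_1$ up to $\A/\IdealI_3$, the pattern $(\square,\blacksquare,\blacksquare,\square)$ --- not your $(\blacksquare,\blacksquare,\square,\square)$, which is $t=0011$, i.e.\ the case \myref{4}{3F}{3} handled by \pref{prop:classgraph1} and deliberately excluded from this lemma. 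Consequently several assertions in your $\si{6}$ discussion are false: $\IdealI_1$ is an \AFa, not \Oia (so your primary justification of the base step via simplicity of $\corona{\IdealI_1\otimes\K}$ collapses), $\IdealI_2$ is mixed rather than \Oia, and $\IdealI_3/\IdealI_2$ is \Oia rather than AF. (Your structural conclusion that no part of \lref{l:fullmixed} applies to $\si{6}$ does remain correct for the true pattern $(\square,\blacksquare,\blacksquare)$ inside $\IdealI_3$.)

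The repair is precisely your own fallback, and it is the paper's proof: $\IdealI_2$ is (stably) a non-AF graph \Ca with exactly one nontrivial ideal (its quotient $\IdealI_2/\IdealI_1$ is \Oia), so \pref{p:full} gives fullness of $0\to\IdealI_1\otimes\K\to\IdealI_2\otimes\K\to\IdealI_2/\IdealI_1\otimes\K\to 0$; likewise $\IdealI_3/\IdealI_1$ is non-AF with exactly one nontrivial ideal (here \emph{both} of its simple subquotients are \Oia), so \pref{p:full} --- or, as you propose, essentiality plus simplicity of $\corona{(\IdealI_2/\IdealI_1)\otimes\K}$, which is valid since $\IdealI_2/\IdealI_1$ is \Oia simple --- gives fullness of $0\to\IdealI_2/\IdealI_1\otimes\K\to\IdealI_3/\IdealI_1\otimes\K\to\IdealI_3/\IdealI_2\otimes\K\to 0$. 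Your closing worry that assembling these two full extensions into fullness of $\busby_{\mathfrak e}$ ``must be carried out with care'' is an artifact of the misread temperatures: under the true pattern $\IdealI_2$ is not purely infinite, and in any case the paper simply cites \cite[Proposition~3.2]{segrer:gclil} here, exactly as in the proof of \lref{l:fullmixed}(iii), with no additional verification; that proposition is the assembly statement, so no new argument about detecting fullness in the non-simple corona of $\IdealI_2\otimes\K$ is required. Once the temperature pattern is corrected, your proposal coincides with the paper's proof.
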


\begin{proof}
We will for show that $\mathfrak{e} : 0 \to \IdealI_{2} \otimes \K \to \IdealI_{3} \otimes \K \to \IdealI_{3} / \IdealI_{2} \otimes \K \to 0$ is a full extension.  By Lemma~\ref{l:fullmixed}, $\mathfrak{e}$ is a full extension for $\si{x} = \si{5},\si{A},\si{D}$.  Consider the case $\si{x} = \si{6}$.  Note that $\IdealI_{2}$ and $\IdealI_{3} / \IdealI_{1}$ are isomorphic to non-\textit{AF} graph \Cas with exactly one nontrivial ideal.  Therefore, by Proposition~\ref{p:full},
\begin{center}
$0 \to \IdealI_{1} \otimes \K \to \IdealI_{2} \otimes \K \to \IdealI_{2} / \IdealI_{1} \otimes \K \to 0$ \\
$0 \to \IdealI_{2} / \IdealI_{1} \otimes \K \to \IdealI_{3}/ \IdealI_{1} \otimes \K \to \IdealI_{3}/ \IdealI_{2} \otimes \K \to 0$
\end{center}
are full extensions.  By \cite[Proposition~3.2]{segrer:gclil}, $\mathfrak{e}$ is a full extension.  The lemma now follows from \cite[Proposition~5.4]{segrer:ccfis}.  
\end{proof}

\begin{lemma}
Let $\A$ be a graph \Ca with tempered signature  \mspk{4}{39} for $\si{x}=\si{2},\si{6},\si{9}, \si{A}, \si{B}, \si{C}, \si{D},$ or \si{E}.  Let $\IdealI$ be the greatest proper ideal of $\A$.
\begin{enumerate}[(1)]
\item If $\A$ is unital, then the extension $0 \to \IdealI \otimes \bK \to \A \otimes \bK  \to \A / \IdealI \otimes \bK \to 0$ is full.
\item When $\si{x} = \si{9}, \si{B}, \si{C}, \si{D}$, the extension $0 \to \IdealI \otimes \bK \to \A \otimes \bK  \to \A / \IdealI \otimes \bK \to 0$ is full.
\end{enumerate} 
\end{lemma}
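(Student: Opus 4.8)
The space $\msp{4}{39}$ is exactly the accordion treated in \S\ref{fanup}: it has a unique minimal (hence least) element $m$, and $X\setminus\{m\}$ is the disjoint union of a single maximal point (the branch $X_1$) and a two-element chain $X_2=\{c<d\}$. Thus the greatest proper ideal is $\IdealI=\A(X\setminus\{m\})$, the quotient $\A/\IdealI=\A(\{m\})$ is simple, and $\A(X\setminus\{m\})=\A(X_1)\oplus\A(X_2)$. The plan is to recognise the displayed extension (applied to the tight \Ca $\A\otimes\K$ over $X$) as $\mathfrak{e}\colon 0\to\A(X\setminus\{m\})\otimes\K\to\A\otimes\K\to\A(\{m\})\otimes\K\to0$ and invoke \lref{l:full}: this extension is full if and only if the two pushed-forward extensions $\mathfrak{e}\cdot\pi_1$ and $\mathfrak{e}\cdot\pi_2$ are full. (Equivalently, by \lref{l:dirsum} the corona of $\A(X\setminus\{m\})\otimes\K$ splits as the direct sum of the coronas of $\A(X_1)\otimes\K$ and $\A(X_2)\otimes\K$, and a Busby image is full precisely when both coordinates are.) This reduces the whole problem to fullness over the two linear pieces $X_1\cup\{m\}$ (two points) and $X_2\cup\{m\}=\{m<c<d\}$ (three points), which is where the temperature data $\si{x}$ enters.

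For the two-point piece $\mathfrak{e}\cdot\pi_1\colon0\to\A(X_1)\otimes\K\to\A(X_1\cup\{m\})\otimes\K\to\A(\{m\})\otimes\K\to0$, which is automatically essential, I would argue as in the \msp{4}{F} and \msp{4}{3F} lemmas above. If the point of $X_1$ is purely infinite, then $\A(X_1)\otimes\K$ is a stable simple purely infinite algebra, its corona is simple, and the essential extension is full. If $\A(X_1\cup\{m\})$ is not an \AFa---which holds whenever $m$ is purely infinite, in particular for every signature occurring in part (2)---then \pref{p:full} gives fullness at once. The only remaining possibility, which occurs in part (1) for the signature in which both points are \AFas, is where the unitality hypothesis is genuinely used: for a unital graph \Ca satisfying Condition (K) a simple \AFa subquotient is finite dimensional, so $\A(X_1)\otimes\K\cong\K$, its corona is the (simple) Calkin algebra, and the essential extension is again full.

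For the three-point piece $\mathfrak{e}\cdot\pi_2\colon0\to\A(X_2)\otimes\K\to\A(X_2\cup\{m\})\otimes\K\to\A(\{m\})\otimes\K\to0$ over $\{m<c<d\}$, I would split according to the temperatures of $m,c,d$. When $m$ and $d$ are purely infinite and $c$ is \AF, this is literally \lref{l:fullmixed}(i). When $m$ is \AF{} (the two part (1) signatures with \AF{} quotient) one instead applies \lref{l:fullmixed}(iii). When $m$ is purely infinite but $\A(X_2)=\A(\{c,d\})$ is not an \AFa, I would stack the two essential two-point extensions $0\to\A(\{d\})\otimes\K\to\A(\{c,d\})\otimes\K\to\A(\{c\})\otimes\K\to0$ and $0\to\A(\{c\})\otimes\K\to\A(\{m,c\})\otimes\K\to\A(\{m\})\otimes\K\to0$---each full by \pref{p:full}, since the relevant algebra is non-\AF---and combine them via \cite[Proposition~3.2]{segrer:gclil} to obtain fullness of the whole three-point extension.

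The genuine obstacle is the last subcase of $\mathfrak{e}\cdot\pi_2$, namely $\si{x}=\si{C}$ (also appearing in part (2), so it must be handled without unitality): here the quotient $\A(\{m\})$ is purely infinite but the ideal $\A(\{c,d\})$ is an \AFa, so its corona is far from simple and \pref{p:full} cannot be used to split $\A(\{c,d\})$ because that algebra is itself \AF. For this case I would appeal to \cite[Corollary~5.6]{segrer:ccfis}, which gives fullness of an essential extension with \Oia quotient over a linear space, exactly as it is used in the proof of \tref{t:class2}; should its hypotheses require the ideal to be built from single points, I would first pass from the essential sub-extension with single-point quotient up to the three-point extension using \cite[Proposition~5.4]{segrer:ccfis}. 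Once both $\mathfrak{e}\cdot\pi_1$ and $\mathfrak{e}\cdot\pi_2$ are full in every listed case, \lref{l:full} yields fullness of the displayed extension, establishing both parts.
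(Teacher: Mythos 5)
Your overall strategy coincides with the paper's proof of part (2): there too the extension is analysed through the fan structure of \msp{4}{39}, writing $\IdealI=\IdealI_{1}\oplus\IdealI_{2}$ with $\IdealI_{1}$ simple and $\IdealI_{2}$ tight over $\X_{2}$, handling the two-point branch $0\to\IdealI_{1}\otimes\K\to\A/\IdealI_{2}\otimes\K\to\A/\IdealI\otimes\K\to0$ by \pref{p:full} exactly as you do (non-\textit{AF} because $\tau(m)=1$ for $\si{x}=\si{9},\si{B},\si{C},\si{D}$), and reassembling the two branches by \lref{l:full}. You differ in two places. For the three-point branch the paper cites, uniformly in all four signatures, Lemma~4.5 of \cite{segrer:scecc} together with Corollaries~5.3 and~5.6 of \cite{segrer:ccfis}; your temperature-by-temperature treatment (\lref{l:fullmixed}(i) for \si{9} and \si{D}; two applications of \pref{p:full} stacked via \cite[Proposition~3.2]{segrer:gclil} for \si{B}) replaces the appeal to the in-preparation reference \cite{segrer:scecc} by tools already in the paper, and in fact proves strictly more: it yields unconditional fullness for $\si{x}=\si{6},\si{A},\si{E}$ as well (the paper's part (2) omits these only because \cref{adhocaccordion} needs unitality for those signatures anyway --- for the strong classification of the mixed two-point subquotient, not for fullness). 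For part (1) the paper takes a genuinely different and much shorter route that you miss: no decomposition at all. For unital $\A$ the ideal $\IdealI$ is stable (structure theory of graph algebras) and $\A/\IdealI$ is simple and unital, so fullness is immediate from \cite[Lemma~1.5 and Proposition~1.6]{segrer:cecc}, uniformly in all eight temperatures; your version instead reruns the branch analysis, invoking unitality only for the doubly \textit{AF} two-point branch of \si{2} via $\IdealI_{1}\otimes\K\cong\K$.

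The one genuine soft spot is your subcase $\si{x}=\si{C}$, where the three-point branch is an essential extension of the \Oia simple quotient $\A(\{m\})\otimes\K$ by the \textit{AF} ideal $\A(\{c,d\})\otimes\K$ tight over $\X_{2}$. In its uses in this paper (e.g.\ in \tref{t:class2}) Corollary~5.6 of \cite{segrer:ccfis} appears with a \emph{simple} ideal under an \Oia chain quotient, so it is not clear that it applies verbatim with a two-step ideal; this is presumably why the paper supplements it with Lemma~4.5 of \cite{segrer:scecc} at precisely this point. Moreover your proposed fallback is miswired: \cite[Proposition~5.4]{segrer:ccfis} passes from fullness of the restriction to an \emph{essential ideal of the quotient} to fullness over the whole quotient, which is vacuous here since the quotient $\A(\{m\})$ is already simple and the difficulty sits in the composite ideal; and the stacking route is unavailable since the bottom layer $0\to\A(\{d\})\otimes\K\to\A(\{c,d\})\otimes\K\to\A(\{c\})\otimes\K\to0$ is an extension of \AFas for which no fullness statement exists. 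So for \si{C} you should cite exactly the paper's package, Lemma~4.5 of \cite{segrer:scecc} with Corollaries~5.3 and~5.6 of \cite{segrer:ccfis}. Since you flagged the uncertainty yourself and the needed input is genuinely external (a Kucerovsky--Ng-type automatic fullness for essential extensions with stable ideal and purely infinite simple quotient), this is a citation-level repair rather than a conceptual gap; everything else in your argument is sound.
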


\begin{proof}
  Suppose $\A$ is unital.  Using the general theory of graph \Cas with this specific ideal structure, we have that $\IdealI$ is stable.  Since $\A / \IdealI$ is simple and unital, the conclusion now follows from \cite[Lemma~1.5 and Proposition~1.6]{segrer:cecc}.  We now prove the extension $0 \to \IdealI \otimes \bK \to \A \otimes \bK  \to \A / \IdealI \otimes \bK \to 0$ is always full for the spaces \mspk{4}{39} with $\si{x} = \si{9}, \si{B}, \si{C}, \si{D}$.  Note that $\IdealI = \IdealI_{1} \oplus \IdealI_{2}$ with $\IdealI_{1}$ simple and $\IdealI_{2}$ a tight \Ca over $\X_{2}$.  By \cite[Lemma~4.5]{segrer:scecc} and \cite[Corollary~5.3 and Corollary~5.6]{segrer:ccfis}, we have $0 \to \IdealI_{2} \otimes \bK \to \A / \IdealI_{1} \otimes \K \to ( \A / \IdealI )  \otimes \K \to 0$ is full.  Since $\A / \IdealI_{2} \otimes \bK$ is a non-\textit{AF} graph \Ca with exactly one nontrivial ideal, the extension $0 \to \IdealI_{1} \otimes \bK \to \A / \IdealI_{2} \otimes \bK \to \A / \IdealI   \otimes \K \to 0$ is a full extension (cf.\ Proposition~\ref{p:full}).  Thus, by Lemma~\ref{l:full}, $0 \to \IdealI \otimes \bK \to \A \otimes \bK  \to \A / \IdealI \otimes \bK \to 0$ is full.
\end{proof}

Using the above lemmas and the Universal Coefficient Theorem of Bentmann and K{\"o}hler \cite{rbmk:uctcfts}, we get the following cases: 

\begin{corollary}
Let \A and \B be graph \Cas that are tight over a finite accordion space $X$. 
Assume that there exists an isomorphism from $\FKplus{X}{\A}$ to $\FKplus{X}{\B}$. 
If 
\begin{enumerate}[(1)]\label{adhocaccordion}
\item
\A and \B both have tempered signature \myref{4}{F}{7}, \myref{4}{F}{9}, \myref{4}{39}{11}, \myref{4}{39}{12}, or 
\item
\A and \B both have finitely generated $K$-theory and have tempered signature \myref{4}{F}{3}, \myref{4}{F}{10}, \myref{4}{F}{11}, \myref{4}{39}{9}, \myref{4}{39}{13}, \myref{4}{3F}{5}, \myref{4}{3F}{13},  or 
\item
\A and \B both are unital and have tempered signature \myref{4}{F}{2}, \myref{4}{F}{4}, \myref{4}{F}{5}, \myref{4}{F}{12}, \myref{4}{F}{13}, \myref{4}{39}{2}, \myref{4}{39}{6}, \myref{4}{39}{10}, \myref{4}{39}{14}, \myref{4}{3F}{6}, \myref{4}{3F}{10},  
\end{enumerate}
then $\A\otimes\K\cong\B\otimes\K$. 
\end{corollary}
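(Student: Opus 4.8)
The plan is to deduce every case from Theorem~\ref{t:adhoc}, applied with a suitably chosen open set $U\in\mathbb{O}(X)$ determined by the ideal singled out in the three lemmas immediately preceding the corollary. We may assume throughout that all algebras are stable, as in those lemmas and in Theorem~\ref{t:adhoc}. The first step, common to all three cases, is to lift the given isomorphism from $\FKplus{X}{\A}$ to $\FKplus{X}{\B}$ to an invertible element $\alpha\in\kk(X;\A,\B)$ using the Universal Coefficient Theorem for accordion spaces of Bentmann and K\"ohler \cite{rbmk:uctcfts}. This immediately supplies Assumption~(2) of Theorem~\ref{t:adhoc}.

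Next I would fix, for each signature, the open set $U$ corresponding to the distinguished ideal of the relevant lemma: for the spaces \mspk{4}{F} take $U$ corresponding to the smallest ideal $\IdealI$; for \mspk{4}{3F} take $U$ corresponding to $\IdealI_2$; and for \mspk{4}{39} take $U$ corresponding to the greatest proper ideal $\IdealI$. With this choice the fullness of the extension $\mathfrak{e}_i : 0\to\A_i(U)\otimes\K\to\A_i\otimes\K\to\A_i(X\setminus U)\otimes\K\to 0$ is exactly the conclusion of those lemmas, so Assumption~(1) of Theorem~\ref{t:adhoc} holds. The three hypotheses of the corollary are arranged precisely so that the applicable version of each lemma is available: case~(1) draws on the unconditional fullness statements, case~(3) on the statements that require unitality, and case~(2) uses the finitely generated hypothesis to place the subquotients in the liftable classes discussed below.

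The principal remaining task is Assumption~(3): that the restrictions $\alpha_U$ and $\alpha_{X\setminus U}$ lift to isomorphisms of the corresponding subquotients (tensored with $\K$). Here I would invoke the classes $\mathcal{C}_Y$ of Definition~\ref{def:class} together with the examples listed after it. When a subquotient is an AF-algebra I use Elliott's theorem (the stable AF class), when it is a tight \Oia algebra I use Kirchberg \cite{ek:nkmkna} (the \Oia class), and when it carries the mixed linear structure of the class $\mathcal{C}_{\X_n}$ or the unital two-point class $\mathcal{C}_{\X_2}$ I use the strong classification of \cite{segrer:scecc} and \cite{semt:cnga}; direct-sum decompositions of $\A_i(U)$ or $\A_i(X\setminus U)$ over connected components are handled by the disjoint-union closure of these classes noted in the remark following Definition~\ref{def:class}. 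The extra hypotheses enter exactly at this step: in case~(2) the finitely generated $K$-theory forces $\kl=\kk$ on the relevant subquotients, upgrading the $\kl$-level lifting of \cite{segrer:scecc} to the $\kk$-level lifting required by Definition~\ref{def:class}; in case~(3) unitality both secures the fullness lemmas and forces the smallest ideals to be isomorphic to $\K$, placing them in the unital mixed class $\mathcal{C}_{\X_2}'$.

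With Assumptions~(1)--(3) verified for the chosen $U$, Theorem~\ref{t:adhoc} yields $\A\otimes\K\cong\B\otimes\K$ at once. I expect the main obstacle to be the bookkeeping of the preceding paragraph rather than any new conceptual input: one must check, signature by signature, that \emph{both} $\A_i(U)$ and $\A_i(X\setminus U)$ decompose into subquotients lying in classes satisfying Definition~\ref{def:class}, and confirm that the hypotheses attached to each signature are exactly strong enough to place them there. The combinatorial matching of each tempered signature to the correct fullness lemma and the correct lifting class is where all of the care is required.
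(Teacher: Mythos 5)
Your proposal is correct and follows essentially the same route as the paper's proof: the fullness lemmas preceding the corollary supply Assumption~(1) of Theorem~\ref{t:adhoc}, the Bentmann--K\"ohler UCT for accordion spaces supplies Assumption~(2), and the classes of Definition~\ref{def:class} (stable AF via Elliott, tight \Oia via Kirchberg, and the mixed classes $\mathcal{C}_{\X_n}$ and $\mathcal{C}_{\X_2}$, with finite generation forcing $\kl=\kk$ and unitality forcing $\IdealI\cong\K$, exactly as you say) supply Assumption~(3), with the signature-by-signature bookkeeping you describe being precisely what the paper leaves implicit. The only quibble is attributional: the strong (unique-$\kk$-lifting) classification behind the class $\mathcal{C}_{\X_2}$ is drawn from \cite{segrer:scecc} and \cite{setk:spipgc} rather than \cite{semt:cnga}.
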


\begin{proof}
By the above lemmas, all the extensions are full.  Note that the specified ideal and quotient for each space belongs to classes of \Cas satisfying the conditions in Definition~\ref{def:class}.  Hence, the result now follows from Theorem~\ref{t:adhoc} and the UCT for accordion spaces.
\end{proof}

\subsection{$Y$-shaped spaces}

%

\begin{lemma}
Let $\A$ be a graph \Ca with tempered signature  \mspk{4}{1F} for $\si{x} = \si{2},\si{5},\si{6},\si{7}$, or $\si{D}$, 
and let $\IdealI_{1}$ be the smallest ideal of $\A$ and let $\IdealI_{2}$ be the ideal of $\A$ containing $\IdealI_{1}$ such that $\IdealI_{2} / \IdealI_{1}$ is simple.
\begin{enumerate}[(1)]
\item When $\si{x} = \si{2},\si{6},\si{7}$, or $\si{D}$, the extension $0 \to \IdealI_{2} \otimes \bK  \to \A \otimes \bK  \to \A / \IdealI_{2} \otimes \bK \to 0$ is full.

\item When $\si{x}=\si{5}$, the extension $0 \to \IdealI_{2} \otimes \bK  \to \A \otimes \bK  \to \A / \IdealI_{2} \otimes \bK \to 0$ is full if $\A$ is unital.
\end{enumerate}
\end{lemma}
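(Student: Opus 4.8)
The plan is to peel the extension apart along the two maximal chains of the space \msp{4}{1F} and to assemble the fullness of each piece out of one-ideal extensions we already control.

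First I would record the ideal lattice. Write $\Prim(\A)=\{a,b,c,d\}$, where $d$ is the unique minimal prime (so $\IdealI_1=\A(\{d\})$), $c$ is the point immediately above $d$ (so $\IdealI_2=\A(\{c,d\})$ and $\IdealI_2/\IdealI_1=\A(\{c\})$ is simple, as demanded), and $a,b$ are the two maximal primes. Then $\IdealI_2$ lies below the two incomparable ideals $\A(\{a,c,d\})$ and $\A(\{b,c,d\})$, and $\A/\IdealI_2\cong\A(\{a\})\oplus\A(\{b\})$. Hence the quotient of $\mathfrak{e}:0\to\IdealI_2\otimes\K\to\A\otimes\K\to\A/\IdealI_2\otimes\K\to0$ is a direct sum, and by \lref{l:directsumfull} it is enough to show that the two branch extensions
$$\mathfrak{e}_t:\ 0\to\A(\{c,d\})\otimes\K\to\A(\{t,c,d\})\otimes\K\to\A(\{t\})\otimes\K\to0,\qquad t\in\{a,b\},$$
are full. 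Here $\A(\{t,c,d\})\otimes\K$ is the preimage of the $t$th summand under $\A\otimes\K\to\A/\IdealI_2\otimes\K$, so $\mathfrak{e}_t$ is the pullback of $\mathfrak{e}$ along the inclusion $\iota_t$ and $\busby_{\mathfrak{e}_t}=\busby_{\mathfrak{e}}\circ\iota_t$ by Theorem~2.2 of \cite{elp:mecpfbi}.

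Each $\A(\{t,c,d\})$ is tight over the chain $d<c<t$, and I would produce $\mathfrak{e}_t$ by stacking. Taking $\IdealI_1=\A(\{d\})$, $\IdealI_2=\A(\{c,d\})$ and $\IdealI_3=\A(\{t,c,d\})$ in \cite[Proposition~3.2]{segrer:gclil}, fullness of $\mathfrak{e}_t$ follows once
$$0\to\A(\{d\})\otimes\K\to\A(\{c,d\})\otimes\K\to\A(\{c\})\otimes\K\to0\quad\text{and}\quad 0\to\A(\{c\})\otimes\K\to\A(\{t,c\})\otimes\K\to\A(\{t\})\otimes\K\to0$$
are both full, where $\A(\{t,c\})=\A(\{t,c,d\})/\A(\{d\})$. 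Each of these has a single nontrivial ideal, so by \pref{p:full} it is full provided its total algebra is not an \AFa. Reading off the temperatures, for $\si{x}\in\{\si{2},\si{6},\si{7},\si{D}\}$ at least one of $c,d$ and at least one of $c,t$ has temperature $1$, so both total algebras are non-\textit{AF}; this proves part~(1), with no unitality hypothesis.

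The single obstruction is the case $\si{x}=\si{5}$ with $t=a$: there $c$ and $a$ are both \AFas, so $\A(\{a,c\})$ is an \AFa and \pref{p:full} says nothing about the second stacked extension. This is exactly why unitality is assumed, and I expect it to be the crux of the argument. When $\A$ is unital, the graph-algebra structure results recalled at the outset force every simple \AFa subquotient to be elementary (a simple \AFa subquotient is $\mathsf{M}_{n}$ or $\K$), so $\A(\{c\})\otimes\K\cong\K$ and $\corona{\A(\{c\})\otimes\K}$ is the simple Calkin algebra. The extension $0\to\A(\{c\})\otimes\K\to\A(\{a,c\})\otimes\K\to\A(\{a\})\otimes\K\to0$ is essential (the open set $\{c\}$ is dense in $\{a,c\}$), hence its Busby map is nonzero, and a nonzero map into a simple corona is automatically full. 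Stacking as above then gives $\mathfrak{e}_a$ full, while $\mathfrak{e}_b$ is full by the part-(1) argument (since $b$ has temperature $1$ for $\si{x}=\si{5}$); \lref{l:directsumfull} finishes part~(2), with unitality entering solely through the identification $\A(\{c\})\otimes\K\cong\K$.
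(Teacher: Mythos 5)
Your proposal is correct and takes essentially the same route as the paper: both proofs reduce via \lref{l:directsumfull} to the two branch extensions over the maximal ideals $\IdealJ_1,\IdealJ_2$ containing $\IdealI_2$, obtain fullness of each branch by stacking one-nontrivial-ideal extensions through \cite[Proposition~3.2]{segrer:gclil}, and in the case $\si{x}=\si{5}$ use unitality exactly as you do, namely to conclude $\IdealI_2/\IdealI_1\otimes\K\cong\K$ so that the essential extension $0\to\IdealI_2/\IdealI_1\otimes\K\to\IdealJ_1/\IdealI_1\otimes\K\to\IdealJ_1/\IdealI_2\otimes\K\to0$ is full because $\corona{\K}$ is simple. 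The only cosmetic difference is that for part (1) the paper invokes \lref{l:fullmixed} and \cite[Corollary~5.3 and Corollary~5.6]{segrer:ccfis} for the branch pieces, whereas you uniformly derive them from \pref{p:full}; your temperature bookkeeping (at least one of $c,d$ and one of $c,t$ is purely infinite in each relevant case) justifies this equally well.
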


\begin{proof}
Let $\IdealJ_{1}$ and $\IdealJ_{2}$ be the maximal ideals of $\A$ containing $\IdealI_{2}$.  Suppose $\si{x} = \si{2},\si{6},\si{7}$, or $\si{D}$.    Then, by Lemma~\ref{l:fullmixed}, \cite[Proposition~3.2]{segrer:gclil}, and \cite[Corollary~5.3 and Corollary~5.6]{segrer:ccfis}, $0 \to \IdealI_{2} \otimes \bK \to \IdealJ_{\ell} \otimes \bK \to \IdealJ_{\ell} / \IdealI_{2}  \otimes \bK \to 0$ is full.  Hence, by Lemma~\ref{l:directsumfull}, $0 \to \IdealI_{2} \otimes \bK  \to \A \otimes \bK  \to \A / \IdealI_{2} \otimes \bK \to 0$  is full.  

Suppose that the signature is \myref{4}{1F}{5} and $\A$ is unital.  Assume that $\IdealJ_{1} / \IdealI_{2}$ is an AF-algebra and  $\IdealJ_{2} / \IdealI_{2}$ is purely infinite.  By Lemma~\ref{l:fullmixed}, $0 \to \IdealI_{2} \otimes \bK \to \IdealJ_{2} \otimes \bK \to \IdealJ_{2} / \IdealI_{2}  \otimes \bK \to 0$ is full.  Since $\A$ is a unital graph \Ca, we have that $\IdealI_{2} / \IdealI_{1} \cong \K$.  Therefore, $0 \to \IdealI_{2} / \IdealI_{1} \otimes \K \to \IdealJ_{1} / \IdealI_{1} \otimes \bK \to \IdealJ_{1} / \IdealI_{2}  \otimes \bK \to 0$ is full.  Since $\IdealI_{2}$ is a stably isomorphic to a non-AF graph \Ca with exactly one nontrivial ideal, by Proposition~\ref{p:full}, $0 \to \IdealI_{1} \otimes \K \to \IdealI_{2} \otimes \K \to \IdealI_{2} / \IdealI_{1} \otimes \K \to 0$ is full.  By \cite[Proposition~3.2]{segrer:gclil}, 
$$
0 \to \IdealI_{2} \otimes \bK \to \IdealJ_{1} \otimes \bK \to \IdealJ_{1} / \IdealI_{2}  \otimes \bK \to 0$$
 is full.  Hence, by Lemma~\ref{l:directsumfull}, $0 \to \IdealI_{2} \otimes \bK  \to \A \otimes \bK  \to \A / \IdealI_{2} \otimes \bK \to 0$  is full.

%
%
%
\end{proof}

\begin{lemma}
Let $\A$ be a graph \Ca with tempered signature \mspk{4}{3E} for $\si{x}= \si{3},\si{4},\si{5},\si{9}, \si{B}$, or \si{D}, 
and let $\IdealI_{1}$ and $\IdealI_{2}$ be the minimal ideals of $\A$.
\begin{enumerate}[(1)]
\item When $\si{x}=\si{3}, \si{4},\si{5},\si{B},\si{D}$, the extension $0 \to (\IdealI_{1} \oplus \IdealI_{2} ) \otimes \bK \to \A \otimes \K  \to \A / (\IdealI_{1} \oplus \IdealI_{2} ) \otimes \bK \to 0$ is a full extension.
\item When $\si{x} = \si{9}$, and  $\A$ is unital, then $0 \to (\IdealI_{1} \oplus \IdealI_{2} ) \otimes \bK \to \A \otimes \K \to \A / (\IdealI_{1} \oplus \IdealI_{2} ) \otimes \bK \to 0$ is a full extension.
\end{enumerate}
\end{lemma}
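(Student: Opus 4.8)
The plan is to exploit that the ideal $\IdealI_1\oplus\IdealI_2$ appearing in the statement is a \emph{direct sum}, which lets me split the extension into two extensions over $\X_3$ that I can treat by a corona‑simplicity argument. Write the space of signature \msp{4}{3E} as a unique minimal point $m$, a middle point $w$ with $m<w$, and two incomparable maximal points lying above $w$; the two minimal nonzero ideals $\IdealI_1,\IdealI_2$ are supported at the two maximal points, and the quotient $\A/(\IdealI_1\oplus\IdealI_2)$ is the tight \Ca over the linear space $\{m<w\}\cong\X_2$. The corona of $(\IdealI_1\oplus\IdealI_2)\otimes\K$ is not simple, so I cannot attack the displayed extension $\mathfrak{e}$ directly; instead I decompose it.

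First I would reduce to coordinates. Since $(\IdealI_1\oplus\IdealI_2)\otimes\K=(\IdealI_1\otimes\K)\oplus(\IdealI_2\otimes\K)$, Lemma~\ref{l:dirsum} gives $\corona{(\IdealI_1\oplus\IdealI_2)\otimes\K}\cong\corona{\IdealI_1\otimes\K}\oplus\corona{\IdealI_2\otimes\K}$ with coordinate maps $\overline{\pi}_{\ell}$, and, exactly as in Lemma~\ref{l:essential} (cf.\ \cite{elp:mecpfbi}), $\overline{\pi}_{\ell}\circ\busby_{\mathfrak{e}}$ is the Busby map of the pushout extension $\mathfrak{e}_{\ell}\colon 0\to\IdealI_{\ell}\otimes\K\to(\A/\IdealI_{3-\ell})\otimes\K\to(\A/(\IdealI_1\oplus\IdealI_2))\otimes\K\to0$. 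A positive element of a finite direct sum of \Cas is full precisely when each of its coordinates is full, so $\mathfrak{e}$ is full if and only if both $\mathfrak{e}_1$ and $\mathfrak{e}_2$ are full. Now $\A/\IdealI_{3-\ell}$ is a tight \Ca over $\X_3$ and $\IdealI_{\ell}$ is its smallest nonzero ideal; since the ideal lattice over $\X_3$ is linear, $\IdealI_{\ell}$ is an essential ideal, so each $\mathfrak{e}_{\ell}$ is an essential extension.

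It then remains to prove each $\mathfrak{e}_{\ell}$ full, for which I would show that the corona $\corona{\IdealI_{\ell}\otimes\K}$ is simple: an essential extension whose ideal has simple corona is automatically full, because its Busby map is injective and every nonzero element of a simple \Ca is full. If $\IdealI_{\ell}$ is \Oia, then $\IdealI_{\ell}\otimes\K$ is simple, purely infinite and stable, so its corona is simple (exactly as in the case \msp{4}{F}). If $\IdealI_{\ell}$ is an \AFa, then it is a simple \AFa sub‑quotient of a graph \Ca and hence stabilises to $\K$ (by Lemma~\ref{cfpbasics} together with the structure of simple \AFa graph algebras; in the unital situation of part~(2) a simple \AFa ideal of $\A$ is stably $\K$ directly), so $\corona{\IdealI_{\ell}\otimes\K}\cong\corona{\K}$ is the simple Calkin algebra. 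In either case $\mathfrak{e}_{\ell}$ is full, and combining the two coordinates shows $\mathfrak{e}$ is full in every listed case. In the configurations where $\A/\IdealI_{3-\ell}$ has \AFa smallest ideal and \Oia middle sub‑quotient (namely $\si{x}=\si{4}$ and the \AFa coordinate of $\si{x}=\si{5}$) one may instead obtain fullness of $\mathfrak{e}_{\ell}$ from Lemma~\ref{l:fullmixed}(ii).

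The step I expect to be the crux is the \AFa case of the coronas when $\IdealI_{\ell}$ sits \emph{below} purely infinite sub‑quotients, i.e.\ the configurations occurring for $\si{x}=\si{9}$ and $\si{x}=\si{D}$, where the $\X_3$‑chain has a purely infinite bottom point and Lemma~\ref{l:fullmixed} gives no information. What unlocks these is precisely the structural fact that a simple \AFa ideal of a graph \Ca stabilises to $\K$, so that its corona is simple and essentiality alone forces fullness; the unital hypothesis imposed in part~(2) is exactly what makes this identification with $\K$ transparent, while the same mechanism already handles the part‑(1) case $\si{x}=\si{D}$.
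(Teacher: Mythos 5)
Your coordinatewise strategy is essentially sound where it works, and it runs parallel to the paper's proof: the paper also reduces to the two coordinate pushforwards, but it does so \emph{after} first cutting down to the ideal $\IdealI = \A(X\setminus\{m\})$ whose quotient $\IdealI/(\IdealI_{1}\oplus\IdealI_{2})$ is \emph{simple}, establishing fullness of $0\to(\IdealI_{1}\oplus\IdealI_{2})\otimes\K\to\IdealI\otimes\K\to\IdealI/(\IdealI_{1}\oplus\IdealI_{2})\otimes\K\to0$ coordinatewise via Lemma~\ref{l:full}, and only then passing to $\A$ by Proposition~5.4 of \cite{segrer:ccfis} (the $w$-fibre is essential in $\A/(\IdealI_{1}\oplus\IdealI_{2})$). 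The payoff of that extra cut-down is that each coordinate pushforward inside $\IdealI$ has middle algebra stably a graph \Ca with \emph{exactly one} nontrivial ideal, so Proposition~\ref{p:full} applies whenever that algebra is not AF --- and Proposition~\ref{p:full} is valid for an \emph{arbitrary} \AFa ideal. Your decomposition at the level of $\A$ itself is fine as bookkeeping (the equivalence ``$\mathfrak{e}$ full iff both $\mathfrak{e}_{\ell}$ full'' holds by the direct-sum corona splitting of Lemma~\ref{l:dirsum} and does not need the quotient simple), but it forces you to prove fullness of extensions over $\X_{3}$, and this is exactly where the gap appears.

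The gap is the case $\si{x}=\si{D}$ in part (1). There your \AFa coordinate is the chain with purely infinite bottom \emph{and} middle and \AFa smallest ideal; Lemma~\ref{l:fullmixed} is silent, as you note, and you fall back on the claim that a simple \AFa ideal of a graph \Ca is stably $\K$. That claim is false without unitality: the structure theorem in the preliminaries asserts it only for \emph{unital} $C^{*}(E)$ (its proof uses finiteness of $E^{0}$), and part (1) carries no unitality hypothesis. The minimal \AFa ideal $\IdealI_{1}$ may be stably any simple \AFa --- for instance the CAR algebra, realised by a Bratteli-diagram graph placed under the purely infinite part --- in which case $\corona{\IdealI_{1}\otimes\K}$ is far from simple and essentiality of $\mathfrak{e}_{1}$ yields nothing. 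The repair is the paper's mechanism: writing $\IdealI_{2}$ for the \Oia minimal ideal, the subquotient $\IdealI/\IdealI_{2}$ is stably a non-AF graph \Ca with exactly one nontrivial ideal $\IdealI_{1}$ (its quotient is the $w$-fibre, which is \Oia when $\si{x}=\si{D}$), so Proposition~\ref{p:full} gives fullness of $0\to\IdealI_{1}\otimes\K\to(\IdealI/\IdealI_{2})\otimes\K\to\IdealI/(\IdealI_{1}\oplus\IdealI_{2})\otimes\K\to0$ with no structural input on $\IdealI_{1}$ whatsoever; Lemma~\ref{l:full} then combines the two coordinates, and Proposition~5.4 of \cite{segrer:ccfis} lifts fullness from $\IdealI$ to $\A$. (This route covers $\si{x}=\si{4},\si{5},\si{B}$ uniformly as well, making your appeals to Lemma~\ref{l:fullmixed}(ii) and to corona simplicity unnecessary there, though they are correct; your arguments for $\si{x}=\si{3}$ and for part (2), where unitality does give $\IdealI_{1}\cong\K$, are fine and match the paper's in substance.)
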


\begin{proof}
Suppose $\si{x} = \si{4},\si{5}, \si{B}$, or \si{D}.  Let $\IdealI$ be the ideal of $\A$ containing $(\IdealI_{1} \oplus \IdealI_{2} )$ such that $\IdealI / (\IdealI_{1} \oplus \IdealI_{2} ) $ is simple.  Note that push forward extension of the extension $0 \to (\IdealI_{1} \oplus \IdealI_{2} )  \otimes \bK \to \IdealI \otimes \bK \to \IdealI/ (\IdealI_{1} \oplus \IdealI_{2} )  \otimes \bK \to 0$ via the coordinate projection $(\IdealI_{1} \oplus \IdealI_{2} ) \rightarrow \IdealI_{i}$ is a full extension since its isomorphic to a non-\textit{AF} graph \Cas with exactly one nontrivial ideal.  Therefore, by Lemma~\ref{l:full}, $0 \to (\IdealI_{1} \oplus \IdealI_{2} )  \otimes \bK \to \IdealI \otimes \bK \to \IdealI/ (\IdealI_{1} \oplus \IdealI_{2} )  \otimes \bK \to 0$ is a full extension.  By \cite[Proposition~5.4]{segrer:ccfis}, $0 \to (\IdealI_{1} \oplus \IdealI_{2} ) \otimes \bK \to \A \otimes \bK \to \A / (\IdealI_{1} \oplus \IdealI_{2} ) \otimes \bK \to 0$ is a full extension since $\IdealI / (\IdealI_{1} \oplus \IdealI_{2}) \otimes \K$ is an essential ideal of $\A / (\IdealI_{1} \oplus \IdealI_{2}) \otimes \K$.  

We now prove the extension is full for the case $\si{x}=\si{3}$.  Note that in this case $\IdealI_{1} \otimes \K$ and $\IdealI_{2} \otimes \K$ are purely infinite, simple \Cas.  Let $\IdealI$ be the ideal of $\A$ containing $(\IdealI_{1} \oplus \IdealI_{2} )$ such that $\IdealI / (\IdealI_{1} \oplus \IdealI_{2} ) $ is simple.  By Lemma~\ref{l:essential} and Lemma~\ref{l:full}, $0 \to (\IdealI_{1} \oplus \IdealI_{2}) \otimes \K \to \IdealI \otimes \K \to \IdealI / (\IdealI_{1} \oplus \IdealI_{2}) \otimes \K \to 0$ is a full extension.  The conclusion now follows from \cite[Proposition~5.4]{segrer:ccfis} since $\IdealI / (\IdealI_{1} \oplus \IdealI_{2}) \otimes \K$ is an essential ideal of $\A / (\IdealI_{1} \oplus \IdealI_{2}) \otimes \K$.

Suppose $\si{x}=\si{9}$ and $\A$ is unital.  Then $\IdealI_{i}$ is either $\K$ or a stable, purely infinite, simple \Ca.  Let $\IdealI$ be the ideal containing $\IdealI_{1} \oplus \IdealI_{2}$ such that $\IdealI / (\IdealI_{1} \oplus \IdealI_{2} ) $ is simple.  Note that the signature of $\IdealI$ is $\msp{3}{6}$.  By Lemma~\ref{l:essential}, the push forward extension of the extension $0 \to (\IdealI_{1} \oplus \IdealI_{2} )  \otimes \bK \to \IdealI \otimes \bK \to \IdealI/ (\IdealI_{1} \oplus \IdealI_{2} )  \otimes \bK \to 0$ via the coordinate projection $(\IdealI_{1} \oplus \IdealI_{2} ) \otimes \K \rightarrow \IdealI_{i} \otimes \K$ is essential, and hence full since $\corona{ \IdealI_{i} \otimes \K}$ is simple.  Thus, by Lemma~\ref{l:full}, $0 \to (\IdealI_{1} \oplus \IdealI_{2} ) \otimes \bK \to \IdealI \otimes \bK \to  \IdealI / (\IdealI_{1} \oplus \IdealI_{2} ) \otimes \bK \to 0$ is full.  By \cite[Proposition~5.4]{segrer:ccfis}, $0 \to (\IdealI_{1} \oplus \IdealI_{2} ) \otimes \bK \to \A \otimes \bK \to  \A / (\IdealI_{1} \oplus \IdealI_{2} ) \otimes \bK \to 0$ is a full extension since $\IdealI / (\IdealI_{1} \oplus \IdealI_{2} )$ is an essential ideal of $\A/ (\IdealI_{1} \oplus \IdealI_{2} )$.
\end{proof}

\begin{lemma}
Let $\A$ be a graph \Ca with tempered signature \myref{4}{3E}{7}.  Let $\IdealI$ be the ideal of \A such that $\A / \IdealI$ is simple.  Then $0 \to \IdealI \otimes \K \to \A \otimes \K \to  \A / \IdealI \otimes \K \to 0$ is a full extension.
\end{lemma}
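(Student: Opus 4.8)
The plan is to read the ideal structure off the signature and then assemble the desired full extension from two simpler full extensions along a composition series, via \cite[Proposition~3.2]{segrer:gclil}. From the diagram for \msp{4}{3E} the specialisation order is $1<2$, $2<3$, $2<4$ with $3,4$ incomparable, so $1$ is the unique minimal (closed) point; the temperature \si{7} makes $2,3,4$ purely infinite and $1$ an \AFa point. Hence the unique ideal with simple quotient is $\IdealI=\A(\{2,3,4\})$, with $\A/\IdealI=\A(\{1\})$ a simple \AFa, while the part of the ideal lattice below $\IdealI$ is $0\subsetneq\A(\{3,4\})\subsetneq\IdealI$, where $\A(\{3,4\})=\A(\{3\})\oplus\A(\{4\})$ and $\IdealI/\A(\{3,4\})=\A(\{2\})$ is simple and purely infinite. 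Stabilising, we may assume throughout that the relevant subquotients are (tight) graph \Cas over the appropriate subspaces.

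First I would establish fullness of the ``lower'' extension
\[
\mathfrak{f}\ :\ 0\to \A(\{3,4\})\otimes\K\to\IdealI\otimes\K\to\A(\{2\})\otimes\K\to 0 .
\]
Here $\IdealI=\A(\{2,3,4\})$ is tight over the fan space $\{2,3,4\}$ with least element $2$ and two singleton branches $\{3\},\{4\}$, so Lemma~\ref{l:full} applies and reduces fullness of $\mathfrak{f}$ to fullness of the two pushforwards $0\to\A(\{j\})\otimes\K\to\A(\{2,j\})\otimes\K\to\A(\{2\})\otimes\K\to 0$ for $j=3,4$. Each $\A(\{2,j\})$ is, up to stable isomorphism, a non-\textit{AF} graph \Ca with a single nontrivial ideal $\A(\{j\})$, so these pushforwards are full by Proposition~\ref{p:full}; hence $\mathfrak{f}$ is full.

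Next I would observe that the ``quotient'' extension
\[
\mathfrak{e}_2\ :\ 0\to\A(\{2\})\otimes\K\to\bigl(\A/\A(\{3,4\})\bigr)\otimes\K\to\bigl(\A/\IdealI\bigr)\otimes\K\to 0
\]
is exactly $0\to\A(\{2\})\otimes\K\to\A(\{1,2\})\otimes\K\to\A(\{1\})\otimes\K\to 0$, which is again the stabilised extension of a non-\textit{AF} graph \Ca $\A(\{1,2\})$ over $\X_2$ with unique nontrivial ideal $\A(\{2\})$, hence full by Proposition~\ref{p:full}. Applying \cite[Proposition~3.2]{segrer:gclil} to the tower $\A(\{3,4\})\subset\IdealI\subset\A$ — whose lower piece is the full extension $\mathfrak{f}$ and whose quotient piece is the full extension $\mathfrak{e}_2$ — then yields that $0\to\IdealI\otimes\K\to\A\otimes\K\to\A/\IdealI\otimes\K\to 0$ is full, which is the claim.

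The only genuine subtlety, and the step I would check most carefully, is the applicability of Lemma~\ref{l:full} to $\IdealI$: one must confirm that $\IdealI=\A(\{2,3,4\})$ is itself tight over the fan $\{2,3,4\}$ (inherited from tightness of $\A$ on the open set $\{2,3,4\}=X\setminus\{1\}$) and that its branches are precisely the singletons $\{3\},\{4\}$, so that the direct-sum ideal $\A(\{3,4\})$ and the simple purely infinite quotient $\A(\{2\})$ fit the hypotheses of that lemma. Everything else is a bookkeeping assembly of Proposition~\ref{p:full} and \cite[Proposition~3.2]{segrer:gclil}, with stabilisation used only to pass freely between the graph \Cas and their $\K$-stabilisations.
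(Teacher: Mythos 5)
Your proof is correct, but it assembles the fullness differently from the paper. The paper's own argument is shorter: writing $\IdealI_1=\A(\{3\})$ and $\IdealI_2=\A(\{4\})$ for the two minimal ideals, it establishes fullness only of your middle-layer extension $\mathfrak{e}_2$, i.e.\ of $0 \to \IdealI/(\IdealI_1+\IdealI_2)\otimes\K \to \A/(\IdealI_1+\IdealI_2)\otimes\K \to \A/\IdealI\otimes\K \to 0$, directly from the fact that $\IdealI/(\IdealI_1+\IdealI_2)\otimes\K$ is a stable, purely infinite, simple \Ca (so its corona is simple and this essential extension is automatically full), and then cites Corollary~5.3 of \cite{segrer:ccfis} to pass in one stroke to fullness of $0\to\IdealI\otimes\K\to\A\otimes\K\to\A/\IdealI\otimes\K\to0$; the bottom layer $\IdealI_1\oplus\IdealI_2$ is thus absorbed by a black-box result rather than handled by hand. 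You instead treat the bottom layer explicitly --- fullness of $\mathfrak{f}$ via the fan-space reduction of Lemma~\ref{l:full} to the two coordinate pushforwards, each full by Proposition~\ref{p:full} --- and then splice the two layers with \cite[Proposition~3.2]{segrer:gclil}; your derivation of fullness of $\mathfrak{e}_2$ from Proposition~\ref{p:full} is interchangeable with the paper's corona-simplicity argument. This is exactly the assembly pattern the paper itself uses in the neighbouring cases (the other \mspk{4}{3E} temperatures, the \mspk{4}{1F} lemma, and the $\si{x}=\si{6}$ case of the \mspk{4}{3F} lemma), so the route is sound; and the point you flag as the main subtlety is indeed unproblematic, since tightness of \A over the whole space restricts to the open subset $\{2,3,4\}$, making $\IdealI$ tight over the fan with least element $2$ and singleton branches $\{3\},\{4\}$. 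What each approach buys: the paper's is more economical but leans on the stronger external Corollary~5.3 of \cite{segrer:ccfis}, whereas yours is slightly longer but self-contained in the toolkit displayed in this paper together with \cite[Proposition~3.2]{segrer:gclil}, used here in the same way as elsewhere in the text.
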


\begin{proof}
Let $\IdealI_{1}$ and $\IdealI_{2}$ be the minimal ideals of \A which is contained in $\IdealI$.  Since $\IdealI / ( \IdealI_{1} + \IdealI_{2} )$ is a non-unital, purely infinite, simple \Ca, we have that $0 \to \IdealI / ( \IdealI_{1} + \IdealI_{2} )  \otimes \K \to \A / ( \IdealI_{1} + \IdealI_{2} ) \otimes \K \to  \A / \IdealI \otimes \K \to 0$ is a full extension.  The conclusion of the lemma now follows from Corollary~5.3 of \cite{segrer:ccfis}.
\end{proof}

\begin{lemma}
Let $\A$ be a graph \Ca with tempered signature \myref{4}{1F}{14}.  Let $\IdealI$ be the smallest ideal of \A.  Then $0 \to \IdealI \otimes \K \to \A \otimes \K \to  \A / \IdealI \otimes \K \to 0$ is a full extension.
\end{lemma}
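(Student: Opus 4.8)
The plan is to exhibit the desired extension as an essential enlargement of a full extension that already lives inside a single intermediate ideal, where fullness is automatic because that ideal is non-AF with a unique nontrivial ideal. First I would record the ideal lattice. Writing the four primitive ideals as the points of \msp{4}{1F}, the order has two minimal points $a,b$ below a point $c$, which in turn lies below the unique maximal point $d$; the tempered signature \myref{4}{1F}{14} forces the simple subquotient $\A(\{d\})$ to be an \AFa while $\A(\{a\})$, $\A(\{b\})$, $\A(\{c\})$ are all \Oia. Thus $\IdealI=\A(\{d\})$ is the smallest ideal, and there is a unique ideal $\IdealJ$ with $\IdealI\subsetneq\IdealJ$ and $\IdealJ/\IdealI$ simple, namely $\IdealJ=\A(\{c,d\})$, with $\IdealJ/\IdealI\cong\A(\{c\})$ purely infinite.

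Next I would apply \pref{p:full} to $\IdealJ$. By \lref{cfpbasics}(ii), $\IdealJ\otimes\K$ is stably isomorphic to a graph \Ca, and its ideal lattice (coming from the open subsets $\emptyset,\{d\},\{c,d\}$ of $\{c,d\}$) has exactly one nontrivial member, namely $\IdealI$. Since the quotient $\IdealJ/\IdealI\cong\A(\{c\})$ is \Oia, the algebra $\IdealJ$ is not an \AFa. Hence \pref{p:full} yields that $0\to\IdealI\otimes\K\to\IdealJ\otimes\K\to(\IdealJ/\IdealI)\otimes\K\to0$ is a full extension.

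Finally I would promote this to the full algebra. The key observation is that $\IdealJ/\IdealI\cong\A(\{c\})$ is an \emph{essential} ideal of $\A/\IdealI\cong\A(\{a,b,c\})$: the corresponding open subset $\{c\}$ of the three-point quotient space is dense, since its closure is its downward closure $\{a,b,c\}$. Therefore Proposition~5.4 of \cite{segrer:ccfis} applies with kernel $\IdealI$, intermediate ideal $\IdealJ$, and essential intermediate quotient $\IdealJ/\IdealI$, upgrading the fullness of the inner extension to fullness of $0\to\IdealI\otimes\K\to\A\otimes\K\to(\A/\IdealI)\otimes\K\to0$, which is exactly the claim. Note that the direct-sum device of \lref{l:directsumfull} is \emph{not} available here, since $\A/\IdealI$ is not a direct sum (its two maximal ideals intersect in $\A(\{c\})$), which is precisely why the essential-ideal route is the natural one.

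All steps are routine given the cited machinery, so there is no genuine obstacle; the only points deserving care are the bookkeeping of the ideal lattice of \msp{4}{1F} and the two verifications feeding the two propositions — that $\IdealJ$ is non-AF (immediate from $\A(\{c\})$ being \Oia) and that $\IdealJ/\IdealI$ lies essentially in $\A/\IdealI$ (the density computation above). One should also remember that \pref{p:full} is phrased for honest graph \Cas, so the reduction via \lref{cfpbasics}(ii) to a stably isomorphic graph algebra must be invoked before applying it, using that fullness of an extension is invariant under stabilisation.
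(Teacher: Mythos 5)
Your proof is correct and takes essentially the same route as the paper's: the paper likewise passes to the unique ideal $\IdealI_{1}\supsetneq\IdealI$ with $\IdealI_{1}/\IdealI$ simple, deduces fullness of $0\to\IdealI\otimes\K\to\IdealI_{1}\otimes\K\to\IdealI_{1}/\IdealI\otimes\K\to0$ from $\IdealI_{1}$ being stably isomorphic to a non-AF graph \Ca with exactly one nontrivial ideal (i.e., \pref{p:full}), and concludes via Proposition~5.4 of \cite{segrer:ccfis} using that $\IdealI_{1}/\IdealI$ is essential in $\A/\IdealI$. Your additional verifications (the ideal-lattice bookkeeping for \msp{4}{1F}, the density computation establishing essentiality, and the stabilisation remark for applying \pref{p:full}) are details the paper leaves implicit.
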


\begin{proof}
Let $\IdealI_{1}$ be the ideal of \A such that $\IdealI_{1}$ contains $\IdealI$ and $\IdealI_{1} / \IdealI$ is simple.  Since $\IdealI_{1}$ is stably isomorphic to a non-\textit{AF} graph \Ca with exactly one nontrivial ideal, we have that $0 \to \IdealI \otimes \K \to \IdealI_{1} \otimes \K \to \IdealI_{1} / \IdealI \otimes \K \to 0$ is full.  Since $\IdealI_{1} / \IdealI$ is an essential ideal of $\A / \IdealI$, the conclusion of the lemma follows from Proposition~5.4 of \cite{segrer:ccfis}.
\end{proof}

Using the above lemmas and the results of \cite{sagrer:fkrrzc}, we get the following: 

\begin{corollary}\label{adhocY}
Let \A and \B be graph \Cas with signature either \msp{4}{1F} or \msp{4}{3E}, and 
assume that there exists an isomorphism from $\FKplus{X}{\A}$ to $\FKplus{X}{\B}$. 
If 
\begin{enumerate}[(1)]
\item
\A and \B both have tempered signature \myref{4}{1F}{7}, \myref{4}{1F}{14}, \myref{4}{3E}{3}, \myref{4}{3E}{7}, or \myref{4}{3E}{13}, or
\item
\A and \B both have finitely generated $K$-theory and have tempered signature \myref{4}{1F}{13}, \myref{4}{3E}{4} or \myref{4}{3E}{5}, or 
\item
\A and \B both are unital and have tempered signature \myref{4}{1F}{2}, \myref{4}{1F}{5}, \myref{4}{1F}{6}, \myref{4}{3E}{9} or \myref{4}{3E}{11}, 
\end{enumerate}
then $\A\otimes\K\cong\B\otimes\K$. 
\end{corollary}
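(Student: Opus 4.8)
The plan is to apply Theorem~\ref{t:adhoc} once for each tempered signature in the list, taking in each case the open set $U\in\mathbb{O}(X)$ that is singled out in the corresponding lemma above, so that the extension
\[
\mathfrak{e}_{i} \ : \ 0 \to \A_{i}(U)\otimes\K \to \A_{i}\otimes\K \to \A_{i}(X\setminus U)\otimes\K \to 0
\]
is exactly the one whose fullness has just been established. Assumption~(1) of Theorem~\ref{t:adhoc} is then supplied directly by the four preceding lemmas. Fullness holds unconditionally for the signatures in groups~(1) and~(2), and for most of group~(3); only for \myref{4}{1F}{5} and \myref{4}{3E}{9} did the fullness lemma itself invoke unitality, which is among the hypotheses imposed on group~(3).

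To verify assumption~(2) I would lift the given isomorphism $\FKplus{X}{\A}\to\FKplus{X}{\B}$ to an invertible element of $\kk(X;\A,\B)$. Because \A and \B are graph \Cas with finitely many ideals they automatically have real rank zero, so over the $Y$-shaped spaces \msp{4}{1F} and \msp{4}{3E} such a lift is provided by the real-rank-zero Universal Coefficient Theorem of \cite{sagrer:fkrrzc}.

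For assumption~(3) I would check, signature by signature, that the induced classes $\alpha_{U}$ and $\alpha_{X\setminus U}$ of this $\kk(X)$-element lift to genuine isomorphisms of the ideal $\A_{i}(U)\otimes\K$ and the quotient $\A_{i}(X\setminus U)\otimes\K$. This is where the classes of Definition~\ref{def:class} and the remark following Theorem~\ref{t:adhoc} enter, and the three-way split of the corollary records precisely which class is needed. For group~(1) the two pieces are simple purely infinite \Cas (covered by Kirchberg \cite{ek:nkmkna}) or \AFas (covered by Elliott \cite{gae:cilssfa}), so no extra hypothesis is required. For group~(2) one piece is a mixed \Ca over $\X_{2}$ whose ideal is an \AFa and whose quotient is \Oia, whose classification through the class $\mathcal{C}_{\X_{2}}$ identifies $\kl$ with $\kk$ only when the $K$-groups are finitely generated, which is the hypothesis attached to that group. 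For group~(3) the relevant mixed class is the class $\mathcal{C}_{\X_{2}}'$ of unital graph \Cas, matching the unitality hypothesis, which as noted above is in two cases already forced by fullness.

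Once assumptions~(1)--(3) are confirmed, Theorem~\ref{t:adhoc} delivers $\A\otimes\K\cong\B\otimes\K$ in each case. The main obstacle I anticipate is organisational rather than conceptual: one must correctly match each of the thirteen tempered signatures to the right choice of $U$ and to the right admissible class for its ideal and quotient, and confirm that the fullness lemmas and the admissibility statements cover exactly those signatures under the stated plain, finitely generated, or unital hypotheses. The genuinely hard analytic input --- fullness of the extensions and the existence of the $\kk(X)$-lift --- has already been isolated in the preceding lemmas and in \cite{sagrer:fkrrzc}, so what remains is bookkeeping.
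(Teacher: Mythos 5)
Your proposal is correct and follows essentially the same route as the paper, whose proof of Corollary~\ref{adhocY} is exactly this: fullness of the distinguished extension from the four preceding lemmas (assumption~(1)), the real-rank-zero UCT of \cite{sagrer:fkrrzc} over the $Y$-shaped spaces to lift $\FKplus{X}{\A}\cong\FKplus{X}{\B}$ to an invertible element of $\kk(X;\A,\B)$ (assumption~(2)), and membership of the ideal and quotient in the classes of Definition~\ref{def:class} (assumption~(3)), whence Theorem~\ref{t:adhoc} applies. One tiny imprecision: in group~(1) the ideal or quotient need not be \emph{simple} purely infinite --- e.g.\ for \myref{4}{1F}{7} the ideal is a non-simple tight \Oia \Ca over $\X_2$ --- but this is harmless since the class you invoke via Kirchberg \cite{ek:nkmkna} consists of all tight \Oia \Cas over a finite $T_0$-space, not just the simple ones.
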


\begin{proof}
By the above lemmas, all the extensions are full.  Note that the specified ideal and quotient for each space belongs to classes of \Cas satisfying the conditions in Definition~\ref{def:class}.  Hence, the result now follows from Theorem~\ref{t:adhoc}.
\end{proof}

\subsection{$O$-shaped spaces}

\begin{lemma}
Let $\A$ be a graph \Ca that is a tight \Ca over the $O$-shaped space \myref{4}{3B}{7}.  Let $\IdealI$ be the smallest ideal of $\A$ and let $\IdealI_{1}$ and $\IdealI_{2}$ be the ideals of $\A$ which contain $\IdealI$ and $\IdealI_{k} / \IdealI$ is simple.  Then $0 \to (\IdealI_{1} + \IdealI_{2}) \otimes \K \to \A \otimes \K \to \A / (\IdealI_{1} + \IdealI_{2}) \otimes \K \to 0$ is a full extension. 
\end{lemma}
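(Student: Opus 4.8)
The plan is to read the ideal lattice off the signature, quotient out the unique minimal ideal so that the remaining ideal becomes a direct sum, establish fullness of the resulting extension with the fan-up machinery of Section~\ref{fanup}, and then transport fullness back across the quotient. First I record that for the $O$-shaped space \myref{4}{3B}{7} the smallest ideal $\IdealI$ is purely infinite and simple, that $\IdealI_{1}\cap\IdealI_{2}=\IdealI$, and that $\IdealI_{1}+\IdealI_{2}$ is the unique maximal proper ideal of $\A$, with simple quotient $\A/(\IdealI_{1}+\IdealI_{2})$. Write $\mathfrak{e}$ for the extension in the statement. The essential difficulty is that $\IdealI_{1}\cap\IdealI_{2}=\IdealI\neq 0$, so $\IdealI_{1}+\IdealI_{2}$ is \emph{not} the internal direct sum of $\IdealI_{1}$ and $\IdealI_{2}$, and the $Y$-shaped argument (e.g.\ the one for \myref{4}{3E}{3}) cannot be applied verbatim.

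To get around this I would pass to the quotient by $\IdealI\otimes\K$. The algebra $\A/\IdealI$ is again a graph \Ca, tight over the three-point fan-up space obtained by deleting the generic point, and modulo $\IdealI$ the two ideals become orthogonal:
\[
(\IdealI_{1}+\IdealI_{2})/\IdealI=(\IdealI_{1}/\IdealI)\oplus(\IdealI_{2}/\IdealI),
\]
a direct sum of two stable, purely infinite, simple \Cas. Thus the reduced extension
\[
\overline{\mathfrak{e}}\ :\ 0\to(\IdealI_{1}+\IdealI_{2})/\IdealI\otimes\K\to\A/\IdealI\otimes\K\to\A/(\IdealI_{1}+\IdealI_{2})\otimes\K\to 0
\]
is exactly of the form treated in Lemma~\ref{l:full}, the two singletons corresponding to $\IdealI_{1}/\IdealI$ and $\IdealI_{2}/\IdealI$. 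For each $k$ the pushed-forward extension $\overline{\mathfrak{e}}\cdot\pi_{k}$ has ideal $\IdealI_{k}/\IdealI\otimes\K$, which is stable, simple and purely infinite, so $\corona{\IdealI_{k}/\IdealI\otimes\K}$ is simple; since $\overline{\mathfrak{e}}\cdot\pi_{k}$ is moreover essential, its Busby map is injective and hence full. By Lemma~\ref{l:full}, $\overline{\mathfrak{e}}$ is a full extension.

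Finally I would lift fullness across the quotient. As $\overline{\mathfrak{e}}$ is nothing but $\mathfrak{e}$ reduced modulo $\IdealI\otimes\K$, and $\IdealI\otimes\K$ is a stable, purely infinite, simple \Ca that is essential in $(\IdealI_{1}+\IdealI_{2})\otimes\K$, Corollary~5.3 of \cite{segrer:ccfis} promotes fullness of $\overline{\mathfrak{e}}$ to fullness of $\mathfrak{e}$, exactly as in the $Y$-shaped case \myref{4}{3E}{7}. The main obstacle throughout is the non-orthogonality $\IdealI_{1}\cap\IdealI_{2}=\IdealI$; the device of quotienting by this shared purely infinite simple ideal is what both splits the remaining ideal as a direct sum (making Lemma~\ref{l:full} applicable) and leaves fullness undisturbed (via Corollary~5.3). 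The two bookkeeping points needing verification are that $\A/\IdealI$ is tight over the fan-up space and that $\IdealI\otimes\K$ meets the hypotheses of Corollary~5.3.
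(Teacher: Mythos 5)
Your proposal is correct in substance and, through its first two steps, coincides with the paper's own proof: both pass to $\A/\IdealI$, which is tight over the three-point fan-up space \myref{3}{6}{5}, and both deduce fullness of the reduced extension from Lemma~\ref{l:full}, since each pushforward has ideal $\IdealI_{k}/\IdealI\otimes\K$ stable, purely infinite and simple, hence simple corona, and is essential by tightness. The divergence is only in the final transfer. The paper does not quotient-and-lift: it instead proves directly that the bottom extension $0 \to \IdealI \otimes \K \to (\IdealI_{1}+\IdealI_{2})\otimes\K \to \left((\IdealI_{1}+\IdealI_{2})/\IdealI\right)\otimes\K \to 0$ is full --- and the ingredients are exactly the two facts you record when invoking Corollary~5.3, namely that $\IdealI$ is essential in $\IdealI_{1}+\IdealI_{2}$ and that $\corona{\IdealI\otimes\K}$ is simple --- and then composes the two full extensions via Proposition~3.2 of \cite{segrer:gclil}, where the simplicity of $\A/(\IdealI_{1}+\IdealI_{2})$ is used. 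You outsource this transfer to Corollary~5.3 of \cite{segrer:ccfis}, modeled on the paper's treatment of \myref{4}{3E}{7}. A note of caution: in the fan-space arguments of this paper (the proofs of Theorems~\ref{t:class1} and~\ref{t:pullback-technique}), Corollary~5.3 is invoked to establish fullness of essential extensions whose ideal is purely infinite with a \emph{linear} ideal lattice, and it is presumably precisely because $\IdealI_{1}+\IdealI_{2}$ has a non-linear primitive ideal space (two incomparable points over a common generic point) that the authors route through \cite{segrer:gclil} here rather than quoting Corollary~5.3 directly; so your citation may not literally carry the weight you put on it. The repair costs nothing you have not already established: your essentiality and corona-simplicity observations give fullness of the bottom extension, and Proposition~3.2 of \cite{segrer:gclil} then yields fullness of $\mathfrak{e}$. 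What the paper's composition buys is independence from the exact hypotheses of Corollary~5.3; what your formulation buys is a presentation uniform with the $Y$-shaped case.
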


\begin{proof}
Note that $\A / \IdealI$ is a tight \Ca over the space 3.6.5.  Then by Lemma~\ref{l:full}, $0 \to (\IdealI_{1} + \IdealI_{2}) / \IdealI \otimes \K \to \A/ \IdealI \otimes \K \to \A/ (\IdealI_{1} + \IdealI_{2}) \otimes \K \to 0$ is a full extension since $\IdealI_{1} / \IdealI$ and $\IdealI_{2} / \IdealI$ are purely infinite, simple \Cas.  Also, since $\IdealI$ is an essential ideal of $\IdealI_{1} + \IdealI_{2}$ and since $\IdealI$ is a purely infinite, simple \Ca, we have that $0 \to \IdealI \otimes \K \to ( \IdealI_{1} + \IdealI_{2} ) \otimes \K \to ( \IdealI_{1} + \IdealI_{2} ) / \IdealI \otimes \K \to 0$ is a full extension.  The conclusion of the lemma now follows from Proposition~3.2 of \cite{segrer:gclil} since $\A / (\IdealI_{1} + \IdealI_{2})$ is simple.
\end{proof}

\begin{lemma}
Let $\A$ be a graph \Ca that is a tight \Ca over the $O$-shaped space \myref{4}{3B}{14}.  Let $\IdealI$ be the smallest ideal of $\A$.  Then $0 \to \IdealI \otimes \K \to \A \otimes \K \to \A / \IdealI \otimes \K \to 0$ is a full extension. 
\end{lemma}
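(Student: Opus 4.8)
The plan is to reduce fullness to essentiality by identifying the smallest ideal \IdealI as a simple \AFa that stabilises to the compacts, so that the relevant corona algebra is simple and every nonzero element of it is automatically full.

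First I would extract the order and temperature data from the signature \myref{4}{3B}{14}. The space \msp{4}{3B} is the diamond: a least element, a greatest element $g$, and two incomparable middle points. Since the open sets are precisely the up-sets for the specialisation order, the up-set $\{g\}$ is the unique minimal nonempty open set, so $\IdealI=\A(\{g\})$ is the smallest nonzero ideal, and it is simple because $\{g\}$ is a single point. Reading the temperature string of \myref{4}{3B}{14}, which in the vertex ordering realising the $a$-value $\mathsf{3B}$ is $\mathsf{E}=1110$, shows that $g$ carries temperature $0$; hence \IdealI is AF. Being a simple AF sub-quotient of a graph \Ca, \IdealI is isomorphic to $\mathsf{M}_{n}$ or to $\K$, so in either case $\IdealI\otimes\K\cong\K$.

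Next I would check that the extension is essential. As \IdealI is the smallest nonzero ideal it is contained in every nonzero ideal of \A, and the same containment persists after tensoring with \K; thus $\IdealI\otimes\K$ meets every nonzero ideal of $\A\otimes\K$ nontrivially and is therefore an essential ideal. Equivalently, the Busby map $\busby$ of the extension is injective.

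Finally, since $\IdealI\otimes\K\cong\K$, the corona $\corona{\IdealI\otimes\K}$ is the Calkin algebra and hence simple, so every nonzero element of it is full. Combining this with injectivity of $\busby$, every nonzero element of $\A/\IdealI\otimes\K$ maps to a full element, which is exactly the statement that the extension is full. The only step demanding genuine care is the identification $\IdealI\otimes\K\cong\K$; this is where the temperature $0$ of $g$ and the classification of simple AF sub-quotients of graph algebras recalled at the outset are used, and it is the analogue of the $\corona{\K}$-simplicity arguments already employed for the accordion cases above. Everything else is a formal consequence of \IdealI being the minimal ideal.
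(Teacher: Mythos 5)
Your structural reading of \myref{4}{3B}{14} is correct (diamond order, minimal open set a single maximal point of temperature $0$, so $\IdealI$ is a simple \AFa and the extension is essential), but the decisive step fails: you invoke the classification ``$\IdealI \cong \mathsf{M}_{n}$ or $\K$'' for simple AF sub-quotients, and that statement is recorded in the preliminaries only for \emph{unital} graph \Cas satisfying Condition (K). The present lemma carries no unitality hypothesis --- deliberately, since \cref{adhocO} and the summary table record \myref{4}{3B}{14} with an unconditional $\surd$ rather than $\surd_{\unital}$. Without unitality, a simple AF sub-quotient of a graph \Ca can be stably isomorphic to any simple AF algebra (e.g.\ the CAR algebra, realized from its Bratteli diagram viewed as a graph), and such an algebra is \emph{not} stably isomorphic to $\K$; nor can unitality be smuggled in by stabilization, since the paper only asserts stable isomorphism of graph algebras with finitely many ideals to unital \Cas, not to unital \emph{graph} \Cas, and indeed the stabilized CAR algebra is stably isomorphic to no unital graph algebra. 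For such an $\IdealI$ the corona $\corona{\IdealI\otimes\K}$ is far from simple (for a stabilized UHF algebra the trace yields proper ideals in the corona), so the shortcut ``essential $+$ simple corona $\Rightarrow$ full'' collapses exactly where the content of the lemma lies. The paper is alert to this distinction elsewhere: in the \mspk{4}{F} lemma, the cases where the argument runs through $\IdealI\cong\K$ and simplicity of $\corona{\K}$ are precisely the ones stated ``provided that $\A$ is unital.''

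The paper's own proof routes around the problem through the two middle ideals. Let $\IdealI_{1}, \IdealI_{2}$ be the ideals containing $\IdealI$ with $\IdealI_{k}/\IdealI$ simple; these quotients have temperature $1$, so each $\IdealI_{k}$ is stably a non-AF graph \Ca with exactly one nontrivial ideal, whence $0 \to \IdealI\otimes\K \to \IdealI_{k}\otimes\K \to \IdealI_{k}/\IdealI\otimes\K \to 0$ is full by \pref{p:full} --- the deep input from \cite{semt:cnga}, valid with no assumption that the AF ideal is stably $\K$. Then \lref{l:directsumfull} yields fullness of $0 \to \IdealI\otimes\K \to (\IdealI_{1}+\IdealI_{2})\otimes\K \to (\IdealI_{1}+\IdealI_{2})/\IdealI\otimes\K \to 0$, and since $(\IdealI_{1}+\IdealI_{2})/\IdealI$ is an essential ideal of $\A/\IdealI$, the full extension of the lemma is full by the essential-ideal absorption result invoked there. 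Your argument is salvageable under an added hypothesis that $\A$ is unital (then $\IdealI\otimes\K\cong\K$ and your corona argument closes), but that proves only the $\surd_{\unital}$ version, strictly weaker than what the lemma asserts and what \cref{adhocO} needs.
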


\begin{proof}
Let $\IdealI_{1}$ and $\IdealI_{2}$ be the ideals of $\A$ which contain $\IdealI$ and $\IdealI_{k} / \IdealI$ is simple.  Since $\IdealI_{k} \otimes \K$ is isomorphic to a graph \Ca with exactly one non-trivial ideal and $\IdealI_{k} \otimes \K$ is not an \AFa, by Proposition~\ref{p:full}, we have that $0 \to \IdealI \otimes \K \to \IdealI_{k} \otimes \K \to \IdealI_{k} / \IdealI \otimes \K \to 0$ is a full extension.  By Lemma~\ref{l:directsumfull}, $0 \to \IdealI \otimes \K \to ( \IdealI_{1} + \IdealI_{2} ) \otimes \K \to ( \IdealI_{1} + \IdealI_{2} ) / \IdealI \otimes \K \to 0$ is a full extension.  The conclusion of the lemma now follows from Proposition~5.4 of \cite{segrer:cecc} since $( \IdealI_{1} + \IdealI_{2} ) / \IdealI \otimes \K$ is an essential ideal of $\A / \IdealI$.
\end{proof}

Using the above lemmas and the results of \cite{sarbtk:rfkccka}, we get the following cases:

\begin{corollary}\label{adhocO}
Let \A and \B be graph \Cas that are tight over a $O$-shaped space $X$.  Assume that there exists an isomorphism from $\FKplus{X}{\A}$ to $\FKplus{X}{\B}$.  If \A and \B both have tempered signature \myref{4}{3B}{7} or \myref{4}{3B}{14}, then $\A\otimes\K\cong\B\otimes\K$. 
\end{corollary}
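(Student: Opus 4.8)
The plan is to reduce the statement in each of the two cases to a single application of \tref{t:adhoc}, for a well-chosen open set $U\in\mathbb{O}(X)$, by checking its three hypotheses in turn for $\A_1=\A$ and $\A_2=\B$. For the tempered signature \myref{4}{3B}{7} I would take $U$ to be the open subset with $\A(U)=\IdealI_1+\IdealI_2$, so that $\A(X\setminus U)=\A/(\IdealI_1+\IdealI_2)$; for \myref{4}{3B}{14} I would instead take $U$ with $\A(U)=\IdealI$ the smallest ideal, so that $\A(X\setminus U)=\A/\IdealI$. In both cases hypothesis~(1) of \tref{t:adhoc}, that the extension $\mathfrak{e}_i\colon 0\to\A_i(U)\otimes\K\to\A_i\otimes\K\to\A_i(X\setminus U)\otimes\K\to 0$ be full, is precisely the content of the two (unlabelled) lemmas immediately preceding the corollary.

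Next I would establish hypothesis~(2): the existence of an invertible $\alpha\in\kk(X;\A\otimes\K,\B\otimes\K)$. Forgetting the order, the given isomorphism $\FKplus{X}{\A}\to\FKplus{X}{\B}$ is in particular an isomorphism of filtered $K$-theory $\FK{X}{-}$, and the point is to lift it across a universal coefficient theorem. This is where the work of Arklint, Bentmann and Katsura \cite{sarbtk:rfkccka} for the $O$-shaped space \msp{4}{3B} enters: their UCT applies whenever the real rank is zero and $K_1(\IdealJ/\IdealI)$ is free for all $\IdealI\triangleleft\IdealJ\unlhd\A$ (cf.\ \tref{PI}(iii)), both of which are automatic here, the former by Proposition~3.3 of \cite{jj:realrankzero} and the latter because every subquotient is stably a graph \Ca and graph \Cas have free $K_1$. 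The resulting invertible $\kk(X;-,-)$-class is the required $\alpha$.

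Finally I would verify hypothesis~(3) using the Remark following \tref{t:adhoc} that derives it from membership of the subquotients in classes satisfying Definition~\ref{def:class}. Reading off the temperature string (bit~$0$ sits on the $\prec$-maximal vertex and bit~$3$ on the $\prec$-minimal one, as in the diagram for \msp{4}{3B}) shows that for \myref{4}{3B}{7} the ideal $\A(U)\otimes\K=(\IdealI_1+\IdealI_2)\otimes\K$ is stable, nuclear, separable and \Oia, tight over \msp{3}{3}, while $\A(X\setminus U)\otimes\K$ is a stable simple \AFa; and for \myref{4}{3B}{14} the ideal $\A(U)\otimes\K=\IdealI\otimes\K$ is a stable simple \AFa, while $\A(X\setminus U)\otimes\K$ is stable, nuclear, separable and \Oia, tight over \msp{3}{6}. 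The \Oia subquotients lie in the class of Example~(1) (Kirchberg \cite{ek:nkmkna}) and the \AFas in the class of Example~(4) (Elliott \cite{gae:cilssfa}), both satisfying Definition~\ref{def:class}. With all three hypotheses in hand, \tref{t:adhoc} yields $\A\otimes\K\cong\B\otimes\K$.

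The main obstacle is hypothesis~(2): the rest is bookkeeping with the two preceding lemmas and the catalogue of classes, but producing an invertible $\kk_X$-lift of the filtered $K$-theory isomorphism is genuinely hard and rests entirely on the specialised UCT of \cite{sarbtk:rfkccka} for \msp{4}{3B}. The one subtlety worth double-checking along the way is the correct matching of the temperature bits to the AF/\Oia dichotomy, since the class-membership argument for hypothesis~(3) depends on which of the ideal and the quotient is the \AFa part and which is the purely infinite one.
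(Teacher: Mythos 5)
Your proposal is correct and takes essentially the same approach as the paper: its proof of Corollary~\ref{adhocO} likewise gets hypothesis~(1) of Theorem~\ref{t:adhoc} from the two preceding fullness lemmas, hypothesis~(2) from the UCT of \cite{sarbtk:rfkccka} for the space \msp{4}{3B} (automatic real rank zero and free $K_1$), and hypothesis~(3) from membership of the ideal and quotient in classes satisfying Definition~\ref{def:class}. Your reading of the temperature bits is also right: in \myref{4}{3B}{7} the ideal $(\IdealI_1+\IdealI_2)\otimes\K$ is \Oia over \msp{3}{3} with simple \textit{AF} quotient, while in \myref{4}{3B}{14} the smallest ideal is a simple \AFa with \Oia quotient over \msp{3}{6}, exactly as the paper's lemmas assume.
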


\begin{proof}
By the above lemmas, all the extensions are full.  Note that the specified ideal and quotient for each space belongs to classes of \Cas satisfying the conditions in Definition~\ref{def:class}.  Hence, the result now follows from Theorem~\ref{t:adhoc}.
\end{proof}

\section{Summary of results}\label{resultsstart}

In this final section, we index our results. Cases that open are indicated by ``?''. Cases that are solved in general are marked by ``$\surd$'', and if we need to impose conditions of finitely generated $K$-theory or unitality, this is indicated by ``$\surd_{f.g.}$'' or  ``$\surd_{\mathbf 1}$'', respectively. 

\subsection{One point spaces}

Having nothing new to add, we include the simple case only for completeness. 

\begin{center}
\begin{tabular}{|c|c|c|c|}\hline
\multicolumn{4}{|c|}{\mspk{1}{0}}\\\hline \hline
\mylab{1}{0}{0}&$\square$&$\surd$ &\AFc\\\hline
\mylab{1}{0}{1}&$\blacksquare$&$\surd$&\PIc\\\hline
\end{tabular}
\end{center}

\subsection{Two point spaces}

This case was solved in \cite{semt:cnga}, so again we include it only for completeness.

\begin{center}
\begin{tabular}{|c|c|c|c|}\hline
\multicolumn{4}{|c|}{\mspk{2}{1}}\\\hline \hline
\mylab{2}{1}{0}&\twop{0}&$\surd$ &\AFc\\\hline
\mylab{2}{1}{1}&\twop{1}&$\surd$&\ETPIideal\\\hline
\mylab{2}{1}{2}&\twop{2}&$\surd$ &\ETAFideal\\\hline
\mylab{2}{1}{3}&\twop{3}&$\surd$&\PIc\\\hline
\end{tabular}
\end{center}

\subsection{Three point spaces}

We resolve the case of three primitive ideal spaces here, up to a condition of finite generation which must be imposed in the cases of signature \myref{3}{7}{2} and 
\myref{3}{7}{5}. We do not know if this condition is necessary. 

\begin{center}
\begin{tabular}{|c|c|c|c|c|c|c|c|c|}\cline{1-4}\cline{6-9}
\multicolumn{4}{|c|}{\mspk{3}{3}}&\qquad&\multicolumn{4}{|c|}{\mspk{3}{6}}\\
\cline{1-4}\cline{6-9}
\mylab{3}{3}{0}&\threepin{0}&$\surd$&\AFc    && \mylab{3}{6}{0}&\threepout{0}& $\surd$ &\AFc\\\cline{1-4}\cline{6-9}
\mylab{3}{3}{1}&\threepin{1}&$\surd$&\fandownAFideal&& \mylab{3}{6}{1}&\threepout{1}& $\surd$ &\fanupAFquotient \\\cline{1-4}\cline{6-9}
\mylab{3}{3}{2}&\threepin{2}&$\surd$&\fandownPIideal&& \mylab{3}{6}{2}&\threepout{2}& $\surd$ &\fanupPIquotient \\\cline{1-4}\cline{6-9}
\mylab{3}{3}{3}&\threepin{3}&$\surd$&\fandownPIideal&& \mylab{3}{6}{3}&\threepout{3}& $\surd$ &\fanupPIquotient \\\cline{1-4}\cline{6-9}
\mylab{3}{3}{5}&\threepin{5}&$\surd$&\fandownAFideal&& \mylab{3}{6}{5}&\threepout{5}& $\surd$ &\fanupAFquotient \\\cline{1-4}\cline{6-9}
\mylab{3}{3}{7}&\threepin{7}&$\surd$&\PIc    && \mylab{3}{6}{7}&\threepout{7}& $\surd$ &\PIc \\\cline{1-4}\cline{6-9}
\end{tabular}
\end{center}

\begin{center}
\begin{tabular}{|c|c|c|c|}\hline
\multicolumn{4}{|c|}{\mspk{3}{7}}\\\hline \hline
\mylab{3}{7}{0}&\threeplin{0}&$\surd$ &\AFc\\\hline
\mylab{3}{7}{1}&\threeplin{1}&$\surd$&\FININFPIideal\\\hline
\mylab{3}{7}{2}&\threeplin{2}&$\surd_{\fg}$ &\tref{t:graphmixed2} \\\hline
\mylab{3}{7}{3}&\threeplin{3}&$\surd$&\FININFPIideal\\\hline
\mylab{3}{7}{4}&\threeplin{4}&$\surd$&\FININFAFideal\\\hline
\mylab{3}{7}{5}&\threeplin{5}&$\surd_{\fg}$&\tref{t:graphmixed1} \\\hline
\mylab{3}{7}{6}&\threeplin{6}&$\surd$ &\FININFAFideal\\\hline
\mylab{3}{7}{7}&\threeplin{7}&$\surd$&\PIc\\\hline
\end{tabular}
\end{center}

\subsection{Four point spaces}\label{resultsend}

In this section, we present our results for the case of four primitive ideals. As will be obvious below, the strength of our results varies dramatically with the nature of the 
spaces. In general, we can say quite a lot about all spaces apart from
\msp{4}{E},  \msp{4}{1E}, and \msp{4}{3B}. It may be interesting to
note what makes these spaces difficult to handle; indeed the case
\msp{4}{E} is an accordion space in which a general UCT is know to
hold, but it differs from the other accordion  spaces by having poor
separation properties when it comes to establishing fullness. The
$O$-shaped spaces are also hard to separate fully, but have the added
difficulty that no general UCT is known for them.
\clearpage


\begin{sideways}
\begin{tabular}{|c|c|c|c|c|c|c|c|c|}\cline{1-4}\cline{6-9}
\multicolumn{4}{|c|}{\mspk{4}{E}}&\qquad&\multicolumn{4}{|c|}{\mspk{4}{F}}\\
\cline{1-4}\cline{6-9}
\mylab{4}{E}{0}&\fourE{0}&$\surd$ &\AFc && \mylab{4}{F}{0}&\fourF{0}&$\surd$&\AFc\\\cline{1-4}\cline{6-9}
\mylab{4}{E}{1}&\fourE{1}&$\surd$ &\rref{r:class1}&& \mylab{4}{F}{1}&\fourF{1}&$\surd$ &\FININFPIideal \\\cline{1-4}\cline{6-9}
\mylab{4}{E}{2}&\fourE{2}&?&&& \mylab{4}{F}{2}&\fourF{2}&$\surd_{\unital}$ & \cref{adhocaccordion}  \\\cline{1-4}\cline{6-9}
\mylab{4}{E}{3}&\fourE{3}&?&&& \mylab{4}{F}{3}&\fourF{3}& $\surd_{\fg}$ &\cref{adhocaccordion} \\\cline{1-4}\cline{6-9}
\mylab{4}{E}{4}&\fourE{4}&$\surd$ &\tref{t:pullback-technique} && \mylab{4}{F}{4}&\fourF{4}&$\surd_{\unital}$ &\cref{adhocaccordion} \\\cline{1-4}\cline{6-9}
\mylab{4}{E}{5}&\fourE{5}&$\surd$ &\tref{t:pullback-technique} && \mylab{4}{F}{5}&\fourF{5}&$\surd_{\unital}$&\cref{adhocaccordion} \\\cline{1-4}\cline{6-9}
\mylab{4}{E}{6}&\fourE{6}&?&&& \mylab{4}{F}{6}&\fourF{6}&$\surd$ & \tref{t:class2} \\\cline{1-4}\cline{6-9}
\mylab{4}{E}{7}&\fourE{7}& ?  &   && \mylab{4}{F}{7}&\fourF{7}&$\surd$ &\cref{adhocaccordion}\\\cline{1-4}\cline{6-9}
\mylab{4}{E}{8}&\fourE{8}&?&&& \mylab{4}{F}{8}&\fourF{8}&$\surd$ & \tref{t:class2} \\\cline{1-4}\cline{6-9}
\mylab{4}{E}{9}&\fourE{9}&?&&& \mylab{4}{F}{9}&\fourF{9}&$\surd$ &\cref{adhocaccordion}\\\cline{1-4}\cline{6-9}
\mylab{4}{E}{10}&\fourE{10}&?&&& \mylab{4}{F}{10}&\fourF{10}&$\surd_{\fg}$ & \cref{adhocaccordion} \\\cline{1-4}\cline{6-9}
\mylab{4}{E}{11}&\fourE{11}&?&&& \mylab{4}{F}{11}&\fourF{11}&$\surd_{\fg}$&\cref{adhocaccordion}\\\cline{1-4}\cline{6-9}
\mylab{4}{E}{12}&\fourE{12}&?&&& \mylab{4}{F}{12}&\fourF{12}&$\surd_{\unital}$ & \cref{adhocaccordion} \\\cline{1-4}\cline{6-9}
\mylab{4}{E}{13}&\fourE{13}& ?&  && \mylab{4}{F}{13}&\fourF{13}&$\surd_{\unital}$ &\cref{adhocaccordion} \\\cline{1-4}\cline{6-9}
\mylab{4}{E}{14}&\fourE{14}&?&&& \mylab{4}{F}{14}&\fourF{14}&$\surd$ &\tref{t:class2} \\\cline{1-4}\cline{6-9}
\mylab{4}{E}{15}&\fourE{15}&$\surd$ &\PIc && \mylab{4}{F}{15}&\fourF{15}&$\surd$ &\PIc\\\cline{1-4}\cline{6-9}
\end{tabular}
\end{sideways}

\begin{sideways}
\begin{tabular}{|c|c|c|c|c|c|c|c|c|}\cline{1-4}\cline{6-9} 
\multicolumn{4}{|c|}{\mspk{4}{39}}&\qquad&\multicolumn{4}{|c|}{\mspk{4}{3F}}\\
\cline{1-4}\cline{6-9}
\mylab{4}{39}{0}&\fourthreenine{0}&$\surd$ &\AFc&&                                         
\mylab{4}{3F}{0}&\fourthreeF{0}&$\surd$& \AFc \\\cline{1-4}\cline{6-9}                                       
\mylab{4}{39}{1}&\fourthreenine{1}&$\surd$ &\fanupAFquotient &&                               
\mylab{4}{3F}{1}&\fourthreeF{1}&$\surd$& \FININFPIideal \\\cline{1-4}\cline{6-9}                       
\mylab{4}{39}{2}&\fourthreenine{2}& $\surd_{\unital}$ &\cref{adhocaccordion}  &&                                 
\mylab{4}{3F}{2}&\fourthreeF{2}&$\surd_{\fg}$& \tref{t:graphmixed2} \\\cline{1-4}\cline{6-9}                      
\mylab{4}{39}{3}&\fourthreenine{3}&$\surd$ &\fanupAFquotient &&                               
\mylab{4}{3F}{3}&\fourthreeF{3}&$\surd$& \FININFPIideal\\\cline{1-4}\cline{6-9}                              
\mylab{4}{39}{4}&\fourthreenine{4}&$\surd$ &\fanupAFquotient  &&                                
\mylab{4}{3F}{4}&\fourthreeF{4}&$\surd_{\fg}$ & \tref{t:graphmixed2} \\\cline{1-4}\cline{6-9}                                    
\mylab{4}{39}{5}&\fourthreenine{5}&$\surd$ &\fanupAFquotient  &&                        
\mylab{4}{3F}{5}&\fourthreeF{5}&$\surd_{\fg}$& \cref{adhocaccordion} \\\cline{1-4}\cline{6-9}                       
\mylab{4}{39}{6}&\fourthreenine{6}& $\surd_{\unital}$ &\cref{adhocaccordion} &&                   
\mylab{4}{3F}{6}&\fourthreeF{6}&$\surd_{\unital}$ & \cref{adhocaccordion} \\\cline{1-4}\cline{6-9}                       
\mylab{4}{39}{7}&\fourthreenine{7}&$\surd$& \tref{t:class1}  &&                                      
\mylab{4}{3F}{7}&\fourthreeF{7}& $\surd$& \FININFPIideal \\\cline{1-4}\cline{6-9}                                 
\mylab{4}{39}{8}&\fourthreenine{8}&$\surd$&\ETAFideal&&                                        
\mylab{4}{3F}{8}&\fourthreeF{8}&$\surd$& \ETAFideal \\\cline{1-4}\cline{6-9}                                      
\mylab{4}{39}{9}&\fourthreenine{9}&$\surd_{\fg}$ &\cref{adhocaccordion}  &&                                   
\mylab{4}{3F}{9}&\fourthreeF{9}&$\surd_{\fg}$& \tref{t:graphmixed1} \\\cline{1-4}\cline{6-9}                           
\mylab{4}{39}{10}&\fourthreenine{10}&$\surd_{\unital}$& \cref{adhocaccordion}  &&                  
\mylab{4}{3F}{10}&\fourthreeF{10}&$\surd_{\unital}$& \cref{adhocaccordion} \\\cline{1-4}\cline{6-9}                     
\mylab{4}{39}{11}&\fourthreenine{11}&$\surd$ &\cref{adhocaccordion} &&                                                                     
\mylab{4}{3F}{11}&\fourthreeF{11}&?&  \\\cline{1-4}\cline{6-9}                 
\mylab{4}{39}{12}&\fourthreenine{12}& $\surd$ &\cref{adhocaccordion}  &&                                
\mylab{4}{3F}{12}&\fourthreeF{12}&$\surd$& \FININFAFideal \\\cline{1-4}\cline{6-9}                
\mylab{4}{39}{13}&\fourthreenine{13}&$\surd_{\fg}$& \cref{adhocaccordion}  &&                      
\mylab{4}{3F}{13}&\fourthreeF{13}&$\surd_{\fg}$& \cref{adhocaccordion} \\\cline{1-4}\cline{6-9}                         
\mylab{4}{39}{14}&\fourthreenine{14}& $\surd_{\unital}$ &\cref{adhocaccordion}  &&                               
\mylab{4}{3F}{14}&\fourthreeF{14}&$\surd$& \FININFAFideal \\\cline{1-4}\cline{6-9}                    
\mylab{4}{39}{15}&\fourthreenine{15}&$\surd$ &\PIc  &&                                               
\mylab{4}{3F}{15}&\fourthreeF{15}&$\surd$& \PIc \\\cline{1-4}\cline{6-9}                                   
\end{tabular}
\end{sideways}


\begin{center}                                                                                                        \begin{tabular}{|c|c|c|c|c|c|c|c|c|}\cline{1-4}\cline{6-9}                       
\multicolumn{4}{|c|}{\mspk{4}{A}}&\qquad&\multicolumn{4}{|c|}{\mspk{4}{38}}\\
\cline{1-4}\cline{6-9}                                                               
\mylab{4}{A}{0}&\fourA{0}&$\surd$& \AFc&&                                            \mylab{4}{38}{0}&\fourthreeeight{0}&$\surd$& \AFc \\\cline{1-4}\cline{6-9}                                                    
\mylab{4}{A}{1}&\fourA{1}&$\surd$& \fandownPIideal&&                                              \mylab{4}{38}{1}&\fourthreeeight{1}&$\surd$& \fanupAFquotient \\\cline{1-4}\cline{6-9}                                                    
\mylab{4}{A}{2}&\fourA{2}&$\surd$& \fandownAFideal &&                                             \mylab{4}{38}{3}&\fourthreeeight{3}&$\surd$& \fanupAFquotient \\\cline{1-4}\cline{6-9}                                                    
\mylab{4}{A}{3}&\fourA{3}&$\surd$& \fandownPIideal &&                                             \mylab{4}{38}{7}&\fourthreeeight{7}&$\surd$& \fanupAFquotient \\\cline{1-4}\cline{6-9}                                                    
\mylab{4}{A}{6}&\fourA{6}& $\surd$& \fandownAFideal &&                                            \mylab{4}{38}{8}&\fourthreeeight{8}&$\surd$& \fanupPIquotient \\\cline{1-4}\cline{6-9}                                                    
\mylab{4}{A}{7}&\fourA{7}&$\surd$& \fandownPIideal &&                                             \mylab{4}{38}{9}&\fourthreeeight{9}&$\surd$& \fanupPIquotient \\\cline{1-4}\cline{6-9}                                                    
\mylab{4}{A}{14}&\fourA{14}&$\surd$& \fandownAFideal &&                                           \mylab{4}{38}{11}&\fourthreeeight{11}&$\surd$& \fanupPIquotient \\\cline{1-4}\cline{6-9}                                                  
\mylab{4}{A}{15}&\fourA{15}&$\surd$& \PIc &&                                           \mylab{4}{38}{15}&\fourthreeeight{15}& $\surd$ & \PIc \\\cline{1-4}\cline{6-9}                                                 
\end{tabular}                                                                                   \end{center}                              


\begin{center}
\begin{tabular}{|c|c|c|c|c|c|c|c|c|}\cline{1-4}\cline{6-9}
\multicolumn{4}{|c|}{\mspk{4}{1F}}&\qquad&\multicolumn{4}{|c|}{\mspk{4}{3E}}\\
\cline{1-4}\cline{6-9}
\mylab{4}{1F}{0}&\fouroneF{0}&$\surd$ &\AFc                                  &&\mylab{4}{3E}{0}&\fourthreeE{0}&$\surd$ &\AFc \\\cline{1-4}\cline{6-9}                          
\mylab{4}{1F}{1}&\fouroneF{1}&$\surd$ &\FININFPIideal                        &&\mylab{4}{3E}{1}&\fourthreeE{1}&$\surd$ & \rref{r:class1}  \\\cline{1-4}\cline{6-9}              
\mylab{4}{1F}{2}&\fouroneF{2}& $\surd_{\unital}$ & \cref{adhocY}                         &&\mylab{4}{3E}{3}&\fourthreeE{3}&$\surd$ & \cref{adhocY} \\\cline{1-4}\cline{6-9}          
\mylab{4}{1F}{3}&\fouroneF{3}&$\surd$ &\FININFPIideal                                  &&\mylab{4}{3E}{4}&\fourthreeE{4}&$\surd_{\fg}$ &\cref{adhocY}  \\\cline{1-4}\cline{6-9}       
\mylab{4}{1F}{4}&\fouroneF{4}&$\surd$ & \rref{r:class2}                        &&\mylab{4}{3E}{5}&\fourthreeE{5}&$\surd_{\fg}$ &\cref{adhocY}  \\\cline{1-4}\cline{6-9}     
\mylab{4}{1F}{5}&\fouroneF{5}&$\surd_{\unital}$ & \cref{adhocY}            &&\mylab{4}{3E}{7}&\fourthreeE{7}&$\surd$& \cref{adhocY}  \\\cline{1-4}\cline{6-9}                             
\mylab{4}{1F}{6}&\fouroneF{6}&$\surd_{\unital}$ & \cref{adhocY}                &&\mylab{4}{3E}{8}&\fourthreeE{8}&$\surd$ &\ETAFideal \\\cline{1-4}\cline{6-9}                           
\mylab{4}{1F}{7}&\fouroneF{7}&$\surd$ &\cref{adhocY}                        &&\mylab{4}{3E}{9}&\fourthreeE{9}&$\surd_{\unital}$ &\cref{adhocY}  \\\cline{1-4}\cline{6-9}               
\mylab{4}{1F}{12}&\fouroneF{12}&$\surd$ &\rref{r:class2}                            &&\mylab{4}{3E}{11}&\fourthreeE{11}&$\surd_{\unital}$ &\cref{adhocY}   \\\cline{1-4}\cline{6-9}   
\mylab{4}{1F}{13}&\fouroneF{13}&$\surd_{\fg}$ &\cref{adhocY}                        &&\mylab{4}{3E}{12}&\fourthreeE{12}&$\surd$ &\FININFAFideal  \\\cline{1-4}\cline{6-9}            
\mylab{4}{1F}{14}&\fouroneF{14}&$\surd$& \cref{adhocY}                    &&\mylab{4}{3E}{13}&\fourthreeE{13}&$\surd$ &\cref{adhocY}  \\\cline{1-4}\cline{6-9}                          
\mylab{4}{1F}{15}&\fouroneF{15}&$\surd$&\PIc                           &&\mylab{4}{3E}{15}&\fourthreeE{15}&$\surd$ & \PIc  \\\cline{1-4}\cline{6-9}                          
\end{tabular}
\end{center}


\begin{center}
\begin{tabular}{|c|c|c|c|}\hline
\multicolumn{4}{|c|}{\mspk{4}{1E}}\\\hline\hline
\mylab{4}{1E}{0}&\fouroneE{0}&$\surd$&\AFc\\\hline          
\mylab{4}{1E}{1}&\fouroneE{1}&$\surd$&\rref{r:class1}\\\hline          
\mylab{4}{1E}{3}&\fouroneE{3}&$\surd$&\rref{r:class1}\\\hline          
\mylab{4}{1E}{4}&\fouroneE{4}&$\surd$&\rref{r:class2}\\\hline          
\mylab{4}{1E}{5}&\fouroneE{5}&?&\\\hline          
\mylab{4}{1E}{7}&\fouroneE{7}&?&\\\hline          
\mylab{4}{1E}{12}&\fouroneE{12}&$\surd$&\rref{r:class2}\\\hline        
\mylab{4}{1E}{13}&\fouroneE{13}&?&\\\hline        
\mylab{4}{1E}{15}&\fouroneE{15}&?&\\\hline        
\end{tabular}
\qquad
\begin{tabular}{|c|c|c|c|}\hline
\multicolumn{4}{|c|}{\mspk{4}{3B}}\\\hline\hline
                    \mylab{4}{3B}{0}&\fourthreeB{0}&$\surd$ & \AFc\\\hline                                 
                     \mylab{4}{3B}{1}&\fourthreeB{1}&$\surd$ &\FININFPIideal\\\hline                             
                     \mylab{4}{3B}{2}&\fourthreeB{2}&?&\\\hline                                           
                     \mylab{4}{3B}{3}&\fourthreeB{3}&?&\\\hline                                          
                     \mylab{4}{3B}{6}&\fourthreeB{6}& ? & \\\hline                                          
                     \mylab{4}{3B}{7}&\fourthreeB{7}&$\surd$ &\cref{adhocO} \\\hline                   
                 \mylab{4}{3B}{8}&\fourthreeB{8}&$\surd$ &\ETAFideal \\\hline                                     
                   \mylab{4}{3B}{9}&\fourthreeB{9}&? & \\\hline                       
                   \mylab{4}{3B}{10}&\fourthreeB{10}&?&\\\hline                                          
                               \mylab{4}{3B}{11}&\fourthreeB{11}&? &\\\hline                             
                                \mylab{4}{3B}{14}&\fourthreeB{14}&$\surd$ &\cref{adhocO} \\\hline 
                                    \mylab{4}{3B}{15} &\fourthreeB{15}&$\surd$ & \PIc   \\\hline    
                                  \end{tabular}
                                \end{center}

\clearpage
              
\providecommand{\bysame}{\leavevmode\hbox to3em{\hrulefill}\thinspace}
\providecommand{\MR}{\relax\ifhmode\unskip\space\fi MR }
\providecommand{\MRhref}[2]{%
  \href{http://www.ams.org/mathscinet-getitem?mr=#1}{#2}
}
\providecommand{\href}[2]{#2}


\end{document}